\newif\ifpdf
\newcommand{\Card}{\operatorname{Card}}
\newcommand {\lb} {\lambda}
\newcommand {\Chi} {{\bf \raise 2pt \hbox{$\chi$}} }
\newcommand{\dEalpha}{\frac{\partial^{\alpha}E}{\partial x^{\alpha}}}
\def\be{\begin{equation}}
\def\ee{\end{equation}}
\def\bemult{\begin{multline}}
\def\emul{\end{multline}}
\def\beal{\begin{equation}\begin{aligned}}
\def\eeal{\end{aligned}\end{equation}}
\def\bealn{\begin{equation}\left\{\begin{aligned}}
\def\eealn{\end{aligned}\right.\end{equation}}
\def\beqn{\begin{eqnarray}}
\def\eeqn{\end{eqnarray}}
\def\R{\mathbb{R}}
\newcommand {\sgn} { {\rm sgn} }
\newcommand{\argmin}{\mathop{\mathrm{argmin}}}
\newcommand{\1}{\mathbbm{1}}
\newcommand {\f}   {\frac}
\newcommand{\beq}{\begin{equation}}
\newcommand{\eeq}{\end{equation}}
\newcommand{\bea} {\begin{array}{rl}}
\newcommand{\eea} {\end{array}}
\newcommand{\bepa}{\left\{ \begin{array}{l}}
\newcommand{\eepa} {\end{array}\right.}
\providecommand{\norm}[1]{\left\lVert#1\right\rVert}
\providecommand{\abs}[1]{\lvert#1\rvert}
\providecommand{\megaabs}[1]{\left\lvert#1\right\rvert}
\def\sgn{\mathop{\rm sgn}\nolimits}
      \def\@setcopyright{}
      \def\serieslogo@{}
\definecolor{myorange}{rgb}{0.9568,0.4941,0.1961}
\definecolor{myred}{rgb}{0.9098,0.1294,0.2078}
\definecolor{myblue}{rgb}{0.0352,0.4981,0.6509}
\definecolor{myhyperblue}{rgb}{0.1607,0.3922,0.9}
\definecolor{mygreen}{rgb}{0.2235,0.6353,0.2588}
\definecolor{mygrey}{rgb}{0.3,0.3,0.3}
\newtheorem{thm}{Theorem}[section]
\newtheorem{prop}[thm]{Proposition}
\newtheorem{lem}[thm]{Lemma}
\newtheorem{defn}[thm]{Definition}
\newtheorem{rem}[thm]{Remark}
\begin{document}

\title{Data Assimilation for hyperbolic conservation laws. \\A Luenberger observer approach based on a kinetic description
\thanks{{Received date / Revised version date}
}}
\author{A.-C.~Boulanger
\thanks {UPMC, CNRS UMR 7598,  Laboratoire Jacques-Louis  Lions, F-75005, Paris and Inria Paris-Rocquencourt EPI ANGE, (anne-celine.boulanger@inria.fr).}
\and P.~Moireau \thanks {Inria Saclay-Ile de France Team M3DISIM.}
\and B.~Perthame \thanks {UPMC, CNRS UMR 7598,  Laboratoire Jacques-Louis  Lions, F-75005, Paris and Inria Paris-Rocquencourt EPI BANG.}
\and J.~Sainte-Marie \thanks {Inria Paris-Rocquencourt EPI ANGE and CETMEF, 2 Boulevard
  Gambetta, 60200 Compi\`egne.}
}



\pagestyle{myheadings} \markboth{Data Assimilation for hyperbolic conservation laws}{A.-C.~Boulanger, P.~Moireau, B.~Perthame, J.~Sainte-Marie}\maketitle

\begin{flushright}
 \textit{To George Papanicolaou, \\a pioneer of so many subjects in PDEs,\\ on the
occasion of his 70th birthday.}
\end{flushright}

\begin{abstract}
Developing robust data assimilation methods for hyperbolic conservation laws is a challenging subject. Those PDEs indeed show no dissipation effects and the input of additional information in the model equations may introduce errors that propagate and create shocks.
We propose a new approach based on the kinetic description of the conservation law. A kinetic equation is a first order partial differential equation in which the advection velocity is a free variable. In certain cases, it is possible to prove that the nonlinear conservation law is equivalent to a linear kinetic equation. Hence, data assimilation is carried out at the kinetic level, using a Luenberger observer also known as the nudging strategy in data assimilation. Assimilation then resumes to the handling of a BGK type equation. The advantage of this framework is that we deal with a single ``linear'' equation instead of a nonlinear system and it is easy to recover the macroscopic variables. 
The study is divided into several steps and essentially based on functional analysis techniques. First we prove the convergence of the model towards the data in case of complete observations in space and time. Second, we analyze the case of partial and noisy observations. To conclude, we validate our method with numerical results on Burgers equation and emphasize the advantages of this method with the more complex Saint-Venant system.
\end{abstract}

\begin{keywords}
\smallskip
Data assimilation, hyperbolic conservation law, kinetic formulation, nudging, shallow water system

{\bf Subject classifications. }35L40, 35L65, 65M08, 76D55, 93E11.
\end{keywords}


\section{Introduction}
\label{sec:intro}
Data assimilation has become a popular strategy to refine the quality of numerical simulations of complex physical phenomena by taking into account available measurements. In particular numerous works in environmental sciences, but also life sciences --~see \cite{Blum2008,Navon2009,Mallet2012} and references therein --~have used data assimilation to deal with the various sources of error entering and propagating during a simulation and restricting the performance of a numerical prediction. This is particularly the case for hyperbolic systems in general, and especially for conservative systems where, in the absence of dissipation, even small numerical errors are likely to propagate and expand in time \cite{Chapelle2012}. The main idea behind data assimilation is thus to improve the prediction by integrating additional information on the actual specific system considered, with the use of the measurements --~also called observations --~at our disposal.

We consider in this work a general class of conservation laws that may contain non-linearities as typically illustrated by transport equations, Burgers' equation, the shallow-water or Euler systems and we focus especially on the non-viscous configurations. In data assimilation, two types of approaches are available: the variational and the sequential approach which both can be considered in a deterministic or a stochastic functional framework \cite{Bensoussan1971}. We rely, here, on a deterministic description which can be seen as a first step in the handling of more general uncertainties present in the system. On the one hand, the variational approach was popularized by the 4D-\!Var \cite{dimet2010variational}. It consists in minimizing globally a --~usually least-squares based --~cost function representing a compromise during a period of time between (1) the observation discrepancy computed between simulations of the model and the corresponding measurements and (2) an \emph{a priori} confidence in the 
model.
 The cost function gradient is obtained through an adjoint model integration linked to the dynamical model constraint under which the cost function is minimized. Hence, the minimization is classically performed by a gradient descent algorithm involving numerous successive iterations of the combination of the model and adjoint dynamics during a fixed time window. Moreover, as optimal control theory often assumes differentiability, mathematical difficulties arise when viscosity is neglected and in the presence of discontinuities like shocks, see \cite{bardos2002formalism} and \cite{bardos2005data} where a formalism is defined to avoid incorrect computations of gradients (yet keeping the computation of the adjoint model). 
On the other hand, the sequential methods include in the definition of the approximating dynamics a correction term based on the observation discrepancy. The resulting system called observer --~or estimator in the stochastic context --~is then expected to converge in time to the actual system as it incorporates more and more data. Note that in the sequential approach the initial condition is classically not retrieved but only filtered so that we get an estimation at the final time in one forward simulation. In this context, the most popular observer is based on the Kalman filter which can be virtually defined on any model and shown to be equivalent to the variational approach at least for linear systems, ergo its denomination of optimal estimator \cite{Bensoussan1971}. However the Kalman filter, based on dynamic programing principles, is subject to the famous Bellman curse of dimensionality \cite{Bellman1957} which makes it numerically unpractical for partial differential equations (PDE). One way to circumvent this difficulty --~see Section Discussion for possible alternatives --~is to follow 
the path introduced by Luenberger \cite{Luenberger1963,Luenberger1971} and popularized for PDE with the \emph{nudging} appellation \cite{hoke1976initialization,stauffer1990use} to define a sequential observer which converges to the actual system in one forward simulation and in which the correction term remains tractable in practice. The principle is to introduce the simplest possible correction so that the error between the actual observed trajectory and the simulated systems stabilizes rapidly to 0.  This type of method has recently regained interest with applications to a large range of systems like conservation laws, waves, beams, elasticity or fluid-structure interaction \cite{Liu1997,Chapelle2011,Chapelle2012,Moireau2009,bertoglio2011} and with new general improvements, for example in the determination of the nudging coefficients \cite{zou1992optimal,stauffer1993,vidard2003optimal} or for restoring the initial condition \cite{Auroux2005,Ramdani2010,Haine2011}. However, for conservation laws, despite 
some recent effort \cite{auroux2012}, the observer performances remain difficult to analyze in a general context, in particular in the presence of non-linearities. In this study, we propose a new nudging strategy which, for hyperbolic conservation laws admitting a kinetic description, can be thoroughly justified even for non-linear systems.

Let us consider the practical case of systems, with the illustrative example of the Saint-Venant system, which writes, discarding the source term for simplicity:
\begin{subequations}
	\begin{numcases}{}
	 	\partial_t  H + \partial_x ( H u) = 0 \label{massoriginal},\\
		\partial_t  H u + \partial_x ( H u^2 + \frac{g H^2}{2}) = 0\label{momentumoriginal}.
	\end{numcases}
\end{subequations}
This system naturally comes with a single entropy inequality for the energy
\begin{equation*}
E(H,u) = H\frac{u^{2}}{2} + \frac{gH^{2}}{2}, 
\end{equation*}
which satisfies for weak solutions
\be
\partial_t  E + \nabla \cdot \left(  u( E + \frac{g H^2}{2})\right)\leq 0.
\label{energyineqsvt}
\ee
We expound here briefly the strategy of the kinetic Luenberger observer and set forth a crucial point justifying the interest to design such a method: the automatic decrease of energy that it guarantees. Naive data assimilation on the Saint-Venant system can indeed be written as follows:
\begin{subequations}
	\begin{numcases}{}
        \partial_t \hat H + \partial_x (\hat H\hat u) = \lb( H - \hat H), \label{mass}\\
        \partial_t \hat H\hat u + \partial_x (\hat H\hat u^2 + \frac{g\hat H^2}{2}) = 0. \label{momentum}
	\end{numcases}
\end{subequations}
where $H$ represents data and  $\left(\hat H, \hat u\right)$ the observer. The height is indeed usually the easiest variable to measure. With this system we associate the energy inequality
\[
\partial_t \hat E + \nabla \cdot \left( \hat u(\hat E + \frac{g\hat H^2}{2})\right)\leq -\lb( H - \hat H)\left( \frac{\hat u^{2}}{2} - g\hat H\right). 
\]
Unfortunately, the nonlinearity in the right hand side of the inequality prevents control of the energy dissipation. This is where we introduce the kinetic description of the Saint-Venant system. It consists in using the equivalence between the nonlinear system of conservation laws and a linear kinetic equation in a new variable $\xi$, the kinetic velocity. Formally, we have an equivalence between the macroscopic conservation law and its energy inequality and the kinetic equation. The shallow water system without source term \eqref{massoriginal} -\eqref{momentumoriginal} augmented of its entropy inequality \eqref{energyineqsvt} is equivalent to
\be
 \partial_t M(\hat H, \hat u - \xi) + \xi \cdot \nabla_x M(\hat H, \hat u - \xi) = Q(t,x,\xi).
 \label{kin}
\ee
where $M$ shall be regarded as a particle density defined by
\begin{equation}
	M(h, v - \xi) =
	\begin{cases}
 \frac{1}{2\pi g}, & \text{for} |\xi-v| < \sqrt{2gh},  \\
  0, & \text{otherwise. }
	\end{cases}
\label{gibbsform}
\end{equation}
and $Q$ is what is called a collision operator that verifies
\[
\int_{\R_{\xi}}{Q(t,x,\xi)d\xi} = \int_{\R_{\xi}}{\xi Q(t,x,\xi)d\xi} = 0, \quad \int_{\R_{\xi}}{\frac{\xi^2}{2}Q(t,x,\xi)d\xi} \leq 0. 
\]
The proof is done by simple integration in $\xi$ of the first moments of \eqref{kin}.

In comparison to (\eqref{mass} - \eqref{momentum}), the kinetic assimilated system writes (\cite{Perthame2002})
\begin{equation*}
 \partial_t f(\hat H, \hat u - \xi) + \xi \cdot \nabla_x f(\hat H, \hat u - \xi) = \lb\left(f(\hat H, \hat u - \xi) - M( H, u - \xi)\right).
 \label{kinda}
\end{equation*}
where $f$ and $M$ satisfy the form \eqref{gibbsform}. This construction of $M$ requires available observations of both $H$ and $u$. However, we will show in this article that great assimilation strategies can also be carried out with partial observations on water depth or velocity only with the use of an approximated $M$. Thanks to the property of the collision operator, the energy equation, which is classically obtained by computing $\int_{\R_{\xi}}{\frac{\xi^{2}}{2}\eqref{kin}d\xi}$ (see \cite{Perthame2002}) becomes
\[
\partial_t \hat E + \nabla \cdot \left( \hat u(\hat E + \frac{g\hat H^2}{2})\right)\leq \lb( E - \hat E). \nonumber
\]
This ensures relaxation of energy. Unlike the macroscopic nudging \eqref{mass}, the Luenberger observer that we add at the kinetic level does not perturb the stability of the macroscopic system. This simple example shows the nice properties of the kinetic Luenberger observer when used in the kinetic representation. 

This efficacy encourages the development of a theoretical framework in the case of scalar conservation laws as usually made when dealing with hyperbolic PDEs. The kinetic formulation of them is indeed well understood \cite{Perthame2002}. For a scalar conservation law defined as follows
\begin{equation}
	\begin{cases}
 	  \displaystyle \partial_t u + \sum_{i=1}^{d}\partial_{x_{i}}A_{i}(u) = 0, \\
 	  u(t=0,x) = u_{0}(x).
 \label{conservlaw}
	\end{cases}
\end{equation}
with the entropy inequalities, for any convex function $S(\cdot)$
\be
 \partial_t S(u) + \sum_{i=1}^{d}{\partial_{x_i} \eta_i (u)} \leq 0,
 \label{entropyineq}
 \ee
the kinetic description can be written
\begin{equation*}
	\begin{cases}
		\partial_t M(t,x,\xi) +a(\xi)\cdot \nabla_x M(t,x,\xi) =Q(t,x,\xi),\\
		M(0,x,\xi) = M_0(x,\xi). 
	\end{cases}
\end{equation*}
$u$ is the macroscopic variable, $M$ is the density function which is related to $u$ by its first moment:
$$u(t,x) = \int_{\mathbb{R}} M(t,x,\xi)\ d \xi,$$
 $Q$ is the collision operator. Nonlinearities are hidden in the density function $M$, and shocks are confined in the collision operator. But we emphasize the fact that for fixed $\xi$, the kinetic equation can be handled as a linear equation. The equivalence proof is based on simple integration of the kinetic equations. Then, the observer equations at the macroscopic level
\begin{equation*}
	\begin{cases}
		\partial_t \hat u + \nabla_{x}\cdot A(\hat u) = \lb\left(u -\hat u\right),\\
		\hat u(0) = u_0 + \delta_0, 
	\end{cases}
\end{equation*}
are formulated at the kinetic level
\begin{equation*}
	\begin{cases}
 	   \partial_t f + a(\xi)  \cdot \nabla_x f  =
\lb \left(M - f\right),\\
		f(0) = M_0 + \gamma_0. 
	\end{cases}
\end{equation*}
At the macroscopic level, the nonlinearity of the flux $A$ is the main obstacle to easy convergence proofs of $\hat u$ towards $u$. Possible techniques are based on $L^1$ contraction principle in the scalar case, but do not extend to systems. At the kinetic level, the equation gains linearity and we can even have the explicit solution:
\[
f(t,x,\xi) = f_0(x - a(\xi)t,\xi)e^{-\lb t} + \int_0^t{e^{-\lb s}M(t - s,x - a(\xi)s,\xi)ds}, 
\]
as soon as $M(t,x,\xi)$ can be generated from the measurements.

The outline is organized as follows. Section~\ref{sec:scl} exposes the theoretical framework of our novel method on a scalar hyperbolic conservation law. Convergence theorems are given in the case of complete observations, partial observations and noisy observations. Numerical experiments illustrate its efficiency on Burgers' equation in 1D. Afterwards, Section~\ref{sec:saintvenant} introduces the more practical case of the shallow water equations. We emphasize the benefit of our method for the Saint-Venant system as we are able to build an efficient numerical assimilation scheme based on heights observations only. Once again, numerical simulations are carried out on bidimensional analytical solutions and an example in 3D is also provided.

\vspace{0.5cm}
Throughout the paper, $V_{p}$ stands for the Lebesgue space $L^{p}(\R_{x}^{d}\times \R_{\xi})$ and we use the notation $\norm{f}_{L^{p}(\R_{x}^{d}\times\R_{\xi})} = \norm{f}_{V_{p}}$.

%


\section{Kinetic Luenberger observer on hyperbolic scalar conservation laws}
\label{sec:scl}
In this section, we present the theoretical framework underlying the novel method we propose. It is based on the particular properties of hyperbolic balance laws and their kinetic formulation.

\subsection{Kinetic formulation}
\label{subsec:kinform}
The kinetic formulation, restricted to scalar conservation laws or systems with many entropies, offers a way to represent full families of entropies by a single equation. An entropy is a conservation laws built on nonlinear functions of the unknown. The idea underlying the kinetic formulation is to parametrize those entropies, and consider the parameters an additional variable. We call it the kinetic velocity $\xi$. After some manipulation, we end up with a linear equation in the best cases (for instance for scalar conservation laws). This linearity enables the use of functional analysis techniques and also to derive simple numerical schemes.
We detail here more precisely what was announced in the introduction in the case of a hyperbolic scalar conservation laws of the form \eqref{conservlaw} where $u(t,x)$ defined for $t \geq 0$ is a real-valued function, $x \in \R^d$.

We know that solutions of \eqref{conservlaw} may exhibit shocks. To be considered, weak solutions shall satisfy the entropy inequality \eqref{entropyineq} in the sense of distribution and for any entropy pair $(S,\eta_i, i=1..d)$, where S is a convex entropy and $\eta_i$ the corresponding entropy flux. We introduce the function $\Chi$ such that for $\xi \in \R$
\begin{equation}
	\Chi (\xi, u) =
	\begin{cases}
    +1, & \text{ for } 0 < \xi < u, \\
    -1, & \text{ for } u< \xi < 0,  \\
     0, & \text{ otherwise}.
	\end{cases}
	\label{chidef}
\end{equation}
This function allows to represent any function $S(u)$ as follows
\[
\int_{\R_{\xi}}{S'(u)\Chi(\xi,u)d\xi} = S(u) - S(0). 
\]
Then, system \eqref{conservlaw} with its family of entropy inequalities \eqref{entropyineq} is equivalent to the kinetic formulation
\be
\partial_t \Chi(\xi, u(t,x)) + a(\xi)\cdot \nabla_x \Chi(\xi, u(t,x)) = \nabla_{\xi}m(t,x,\xi),
\label{kinprelim}
\ee
where we have set
\be
a_i(\cdot) = A_i'(\cdot),
\label{kinfluxexpression1} 
\ee
\be
a(\xi) = \left( a_1(\xi), \cdots, a_d(\xi)\right),
\label{kinfluxexpression2}
\ee 
and for some nonnegative bounded measure $m(t,x,\xi)$, called kinetic entropy defect measure. A proof for this assertion is available in \cite{lions1994kinetic}. Though the converse is difficult to address, one implication can be easily computed by multiplying \eqref{kinprelim} by $S'(\xi)$ and integrating in $\xi$.
\begin{rem}
The kinetic entropy defect measure $m$ is unknown a \textit{priori} and takes account of potential singularities in the solution. It can be seen as the Lagrange multiplier for the constraint \eqref{chidef}, similarly to the pressure term in the incompressible Navier-Stokes equation, which is a multiplier related to the constraint that the flow is divergence free. Three properties are worth notifying here:
\begin{itemize}
 \item $m(t,x,\xi)$ depends on the macroscopic solution $u(t,x)$,
  \item in particular, m(t,x,u(t,x)) = 0,
 \item $m$ vanishes where the solution is smooth.
\end{itemize}
\end{rem}

\subsection{Kinetic level nudging in the scalar case for complete observations}
\label{subsec:nudgingscl}
As previously stated in the introduction, the nudging strategy carried out at the kinetic level offers the advantage of a "linear framework" and can guarantee the decay of the energy in such systems as the shallow water equations. In this section we develop theoretical results for this method. We begin with very ideal situations, among which smooth solutions or complete observations are studied. We are then able to state convergence results in the case of partial observations in space or time. 

Let us consider that the observations derive from the scalar conservation law \eqref{conservlaw}. Following Section~\ref{subsec:kinform}, \eqref{conservlaw} admits the kinetic formulation
\bealn
& \partial_t M(t,x,\xi) + a(\xi)\cdot \nabla_x M(t,x,\xi) = \nabla_{\xi}m(t,x,\xi),\\
& M(0,x,\xi) = \Chi(\xi, u_0(x)),
\label{conservlawkin}
\eealn
where $M(t,x,\xi) = \Chi(\xi, u(t,x))$ satisfies \eqref{chidef} and $a(\xi)$ is defined in \eqref{kinfluxexpression1}-\eqref{kinfluxexpression2}. We recall that $\Chi$ is designed such that
\be
u(t,x) = \int_{\R_{\xi}}{\Chi(\xi, u(t,x))d\xi}.
\label{uintegral}
\ee
We set the study in the case of complete observations for a fixed relaxation parameter $\lb$. Hence, we can design the observer $f(t, x, \xi)$ satisfying the Boltzmann type
equation
\bealn
&\partial_tf(t, x, \xi) + a(\xi) \cdot \nabla_x f(t, x, \xi) = \lb \left( M(t,x,\xi) - f(t,x,\xi)\right), \\
&f(t, x, \xi = 0) = f_{0}(x, \xi).
\label{DA_kintimeeasy}
\eealn
We will also denote the macroscopic observer
$$\widehat u(t,x) = \int_{\R_{\xi}}{f(t,x,\xi)d\xi}.$$
First of all, we precise that \eqref{DA_kintimeeasy} admits a unique distributional solution (\cite{Perthame2002}, section 3.5, Th. 3.5.1):
\begin{lem}
Let $f_{0} \in V_1, M \in  C(0,T),L^{1}( \R_x^d
\times
\R_{\xi}))$ for all $T > 0$. Then, there exists a unique distributional solution 
to
\eqref{DA_kintimeeasy}, $f \in C(\R^+ ;V_1)$ and it satisfies
$$f = f_{0}(x - a(\xi) t)e^{-\lb t} + \lb \int_0^t{e^{-\lb s} M(t-s, \xi,
x - a(\xi) s)ds}.$$
\label{lemexistence}
\end{lem}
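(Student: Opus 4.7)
The plan is to treat the equation as a family of linear ODEs along the characteristics $x(t) = x_{0} + a(\xi)t$ parametrized by $(x_{0}, \xi)$. The operator $\partial_{t} + a(\xi)\cdot\nabla_{x}$ is precisely the material derivative along these straight lines, so formally
\[
\frac{d}{dt}f(t, x_{0} + a(\xi)t, \xi) = -\lb\, f(t, x_{0} + a(\xi)t, \xi) + \lb\, M(t, x_{0} + a(\xi)t, \xi).
\]
Multiplying by the integrating factor $e^{\lb t}$, integrating from $0$ to $t$, setting $x = x_{0} + a(\xi)t$, and changing variables $s \leftarrow t-s$ in the Duhamel integral yields exactly the representation in the statement.

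For existence, I would take this formula as a \emph{definition} of the candidate $f$. Because $a(\xi)$ does not depend on $x$, the maps $x \mapsto x - a(\xi)t$ and $x \mapsto x - a(\xi)s$ are translations preserving Lebesgue measure for each fixed $\xi$, so by Fubini
\[
\|f(t)\|_{V_{1}} \leq e^{-\lb t}\|f_{0}\|_{V_{1}} + \lb \int_{0}^{t} e^{-\lb s}\|M(t-s)\|_{V_{1}}\,ds,
\]
which is finite by the assumption on $M$. Continuity of $t \mapsto f(t)$ into $V_{1}$ then follows from the continuity of translations in $L^{1}$ (for the first term) and from the uniform continuity of $M$ as a map $[0,T] \to V_{1}$ together with dominated convergence (for the Duhamel term).

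That $f$ actually solves the equation in $\mathcal{D}'$ is established by approximation: convolving $f_{0}$ and $M$ in $(x,\xi)$ with a smooth mollifier produces regular data $f_{0}^{\e}, M^{\e}$, for which the same formula yields a classical smooth solution $f^{\e}$. The mollified data converge to $(f_{0},M)$ in $V_{1}$ and in $C([0,T]; V_{1})$ respectively, and the representation then delivers $f^{\e} \to f$ in $C([0,T]; V_{1})$. Tested against any $\varphi \in C^{\infty}_{c}(\R^{+} \times \R_{x}^{d} \times \R_{\xi})$, each side of the equation depends continuously on the data in $V_{1}$, so the weak formulation passes to the limit.

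For uniqueness, the difference $g = f_{1} - f_{2}$ of two solutions lies in $C(\R^{+};V_{1})$ and satisfies $\partial_{t} g + a(\xi)\cdot\nabla_{x} g + \lb g = 0$ in $\mathcal{D}'$ with $g(0) = 0$. Mollifying in $x$ yields a smooth $g^{\e}$ solving the same equation (no commutator appears, since $a(\xi)$ is $x$-independent), which legitimizes the identity $\frac{d}{dt}\|g^{\e}(t,\cdot,\xi)\|_{L^{1}_{x}} = -\lb\,\|g^{\e}(t,\cdot,\xi)\|_{L^{1}_{x}}$ by direct integration in $x$; sending $\e \to 0$ and integrating in $\xi$ gives $\|g(t)\|_{V_{1}} = e^{-\lb t}\|g(0)\|_{V_{1}} = 0$. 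The one genuinely technical point is this last $L^{1}$-contraction step for merely $L^{1}$ distributional solutions, but here the $x$-independence of the velocity field $a(\xi)$ sidesteps the usual DiPerna--Lions commutator difficulty, so the proof is essentially computational.
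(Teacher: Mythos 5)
Your proposal is correct. Be aware, though, that the paper contains no proof of this lemma: it is quoted verbatim from Perthame's book (\emph{Kinetic formulations of conservation laws}, Section 3.5, Theorem 3.5.1), so the only ``paper proof'' is a citation. Your argument --- integrating the ODE along the characteristics $x_0+a(\xi)t$ with the integrating factor $e^{\lb t}$ to obtain the Duhamel formula, taking that formula as the definition of the candidate $f$, using that $x\mapsto x-a(\xi)t$ is a measure-preserving translation for each fixed $\xi$ to get the $V_1$ bound and the continuity in time, mollifying the data to verify the distributional formulation by passage to the limit, and proving uniqueness by an $L^1$-contraction along characteristics (where the $x$-independence of $a(\xi)$ indeed kills any commutator, so no DiPerna--Lions machinery is needed) --- is precisely the standard route behind the cited theorem, so in substance you are reconstructing the reference rather than diverging from it. Two small points worth making explicit: one needs $a=A'$ locally bounded (which the kinetic setting provides) so that the composition with $x-a(\xi)s$ and the pairing with test functions compactly supported in $\xi$ are licit; and the argument order $M(t-s,\xi,x-a(\xi)s)$ in the paper's displayed formula is a misprint for $M(t-s,x-a(\xi)s,\xi)$, which is what your derivation correctly yields.
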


\subsubsection{Smooth data}

In the case of smooth solutions, the kinetic defect measure $m$, whose role is to locate the discontinuities, disappears and the simple transport equation remains
\bealn
& \partial_t M(t,x,\xi) + a(\xi)\cdot \nabla_x M(t,x,\xi) = 0,\\
& M(0,x,\xi) = \Chi(\xi, u_0(x)).
\label{conservlawkin0}
\eealn
Therefore, we are able to state a very strong theorem in the simple case of smooth solutions to scalar conservation laws: the kinetic observer converges exponentially fast towards the data and so do the macroscopic variables. The proof is done at the kinetic level by applying Lemma~\ref{lemexistence} to the equation satisfied by the error $f - M$. The macroscopic result follows by integration w.r.t. $\xi$.
\begin{prop}
 Let $ M \in C([0, T] ,V_{1})$ satisfy \eqref{conservlawkin0} and $f \in C(\R^+ ;V_1)$ satisfy \eqref{DA_kintimeeasy}, then
\[
\norm{f -  M}_{V_1}\xrightarrow[t \to +\infty]{}0, 
\]
and the decay is exponential in time with rate $\lb$. This is translated at the macroscopic level as:
\[
\norm{\widehat u -  u}_{L^{1}( R_x^d)}\xrightarrow[t \to +\infty]{}0,  
\]
with the same exponential decay.
\label{thmkineasy}
\end{prop}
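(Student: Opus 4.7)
The plan is to reduce the entire convergence question to a linear damped transport equation for the error $g := f - M$. Since the data are smooth, $M$ satisfies the pure transport equation \eqref{conservlawkin0} (no kinetic defect measure), while $f$ solves the BGK-type equation \eqref{DA_kintimeeasy}. Subtracting the two yields
\[
\partial_t g + a(\xi) \cdot \nabla_x g = -\lb\, g, \qquad g(0,x,\xi) = f_0(x,\xi) - \Chi(\xi, u_0(x)).
\]
This is exactly the situation covered by Lemma~\ref{lemexistence} with the right-hand side source $M$ replaced by $0$, so a unique distributional solution in $C(\R^+; V_1)$ exists and is given by the characteristic formula
\[
g(t,x,\xi) = e^{-\lb t}\, g(0, x - a(\xi) t, \xi).
\]

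From here I would simply compute the $V_1$ norm of $g$. For fixed $\xi$, the drift $a(\xi)$ is constant in $x$, so the change of variable $y = x - a(\xi) t$ has unit Jacobian and Fubini's theorem yields
\[
\|g(t)\|_{V_1} = e^{-\lb t}\, \|f_0 - \Chi(\cdot, u_0)\|_{V_1},
\]
which gives the announced exponential decay at rate $\lb$ at the kinetic level.

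For the macroscopic statement I would integrate $g$ in $\xi$. Using \eqref{uintegral} and the definition of $\widehat u$,
\[
\widehat u(t,x) - u(t,x) = \int_{\R_\xi} g(t,x,\xi)\, d\xi,
\]
and Fubini gives $\|\widehat u(t) - u(t)\|_{L^1(\R_x^d)} \leq \|g(t)\|_{V_1}$, so the same exponential estimate transfers to the macroscopic observer. Because the problem is linear and the drift is $x$-independent, there is essentially no obstacle: the only point that must be checked carefully is that the characteristic formula used above is licit for merely $V_1$ initial data, but this is precisely what is ensured by Lemma~\ref{lemexistence}.
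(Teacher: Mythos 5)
Your argument is correct and is essentially the paper's own proof: subtract the equations to get the damped transport equation for $g = f - M$, apply Lemma~\ref{lemexistence} (with zero source) to obtain $g(t,x,\xi)=e^{-\lb t}g_0(x-a(\xi)t,\xi)$, hence $\norm{g(t)}_{V_1}=e^{-\lb t}\norm{g_0}_{V_1}$, and then integrate in $\xi$ for the macroscopic estimate. Nothing further is needed.
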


\subsubsection{Non-smooth data}

In the situation of non-smooth solutions, the data are assumed to satisfy \eqref{conservlawkin}, \eqref{uintegral} and the kinetic observer $f(t,x,\xi)$ satisfies \eqref{DA_kintimeeasy} since observations are complete in space and time. The presence of the collision term prevents us from obtaining a time convergence as in Proposition~\ref{thmkineasy}. However, we can still obtain a convergence with respect to the nudging parameter $\lb$. In order to show the convergence of $f$ towards $ M$ we introduce another integro-differential equation 
\bealn
&\partial_tf_{\lb}(t, x, \xi) + a(\xi) \cdot \nabla_x f_{\lb}(t, x, \xi) + \lb f_{\lb}(t, x, \xi) = \lb M_{\lb},\\
&f_{\lb}(0, x, \xi) = \Chi(\xi, u_0(x)),
\label{DA_kintime2}
\eealn
where $M_{\lb}(t, x, \xi) = \Chi(\xi, u_{\lb})$ and $u_{\lb}$ is only defined by the relation $u_{\lb} = \int_{\R_{\xi}}{f_{\lb}d\xi}$. Thus, we claim
\begin{thm}
Let $M\in C([0, T] ,V_1)$ satisfy \eqref{conservlawkin} with $ u_0 \in L^1(\R^d)$ and $f \in C(\R^+ ;V_1)$ satisfy \eqref{DA_kintimeeasy} with $f_0 \in V_1$, then for some modulus of continuity $\omega(T,\cdot)$
\[
\norm{f(T) -  M(T)}_{V_1} \leq \norm{ f_0 - f_{\lb,0}}_{V_1}e^{-\lb T} + \omega(T, \frac{1}{\lb}). 
\]
Moreover, if $ u_0 \in L^{\infty}\cap L^1 \cap BV(\R^d)$, the dependency in $\lb$ can be improved as:
\[
\norm{ f(T) - M(T)}_{V_1} \leq \norm{ f_0 - f_{\lb,0}}_{V_1}e^{-\lb T} + C(d)\norm{a}_{L^{\infty}(-K, K)}\sqrt{\frac{T\norm{u_0}_{BV(\R^d)}}{\lb}}.
\]
\label{thmkinform1}
\end{thm}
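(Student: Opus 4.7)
The role of the auxiliary object $f_{\lb}$ introduced just before the theorem is exactly that of an intermediate quantity between $f$ and $M$: unlike $f$, which is driven by the externally supplied Maxwellian $M$, $f_{\lb}$ is a genuine nonlinear BGK relaxation and is therefore governed by the classical Perthame--Tadmor approximation theorem. My plan is to split the error via the triangle inequality
\[
\norm{f(T)-M(T)}_{V_{1}} \leq \norm{f(T)-f_{\lb}(T)}_{V_{1}} + \norm{f_{\lb}(T)-M(T)}_{V_{1}},
\]
handle the second piece by citing the BGK approximation, and handle the first by a Duhamel argument.

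\textbf{Step 1: the BGK approximation of $M$.} For the term $\norm{f_{\lb}(T)-M(T)}_{V_{1}}$, I would invoke the classical BGK convergence theorem of Perthame--Tadmor (see \cite{Perthame2002}, Chap.~4). Since $f_{\lb}(0,\cdot,\cdot)=\Chi(\xi,u_{0})=M(0,\cdot,\cdot)$, no initial-data contribution appears, and one gets a modulus of continuity $\omega(T,1/\lb)$ for $u_{0}\in L^{1}$, upgraded to the quantitative rate $C(d)\norm{a}_{L^{\infty}(-K,K)}\sqrt{T\norm{u_{0}}_{BV(\R^{d})}/\lb}$ under the $BV$ assumption. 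This is the step where the dependence on $\lb$ in the statement is actually produced, so my work is mostly to import the existing result in the exact form needed.

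\textbf{Step 2: Duhamel for $f-f_{\lb}$.} Subtracting the equation for $f_{\lb}$ from that for $f$ gives
\[
\partial_{t}(f-f_{\lb}) + a(\xi)\cdot\nabla_{x}(f-f_{\lb}) + \lb(f-f_{\lb}) = \lb(M-M_{\lb}).
\]
The transport operator $a(\xi)\cdot\nabla_{x}$ preserves $V_{1}$, so either a Kato-type inequality or the explicit representation of Lemma~\ref{lemexistence} yields
\[
\norm{f(T)-f_{\lb}(T)}_{V_{1}} \leq e^{-\lb T}\norm{f_{0}-f_{\lb,0}}_{V_{1}} + \lb\int_{0}^{T} e^{-\lb(T-s)} \norm{M(s)-M_{\lb}(s)}_{V_{1}}\, ds.
\]
The exponential term is already the first contribution on the right-hand side of the statement.

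\textbf{Step 3: closing the loop.} The key identity, which makes the whole argument work, is that $\Chi$ has the isometry property $\int_{\R_{\xi}}|\Chi(\xi,a)-\Chi(\xi,b)|\,d\xi = |a-b|$; combined with $u_{\lb}-u=\int_{\R_{\xi}}(f_{\lb}-M)\,d\xi$, this gives
\[
\norm{M(s)-M_{\lb}(s)}_{V_{1}} = \norm{u(s)-u_{\lb}(s)}_{L^{1}(\R^{d})} \leq \norm{f_{\lb}(s)-M(s)}_{V_{1}},
\]
so the source in the Duhamel integral is controlled by the very quantity bounded in Step 1. Plugging in the $\omega(s,1/\lb)$ or $C(d)\norm{a}_{L^{\infty}}\sqrt{s\norm{u_{0}}_{BV}/\lb}$ estimate, monotonicity in $s$ lets me pull it out of the integral, and the remaining integral $\lb\int_{0}^{T}e^{-\lb(T-s)}ds = 1-e^{-\lb T}\leq 1$ absorbs the prefactor $\lb$ cleanly. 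Summing the two triangle pieces yields the stated bounds.

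\textbf{Expected obstacle.} The computational steps (Duhamel, the $\Chi$ identity, the Grönwall-type integration of the bound) are routine; the real substance lies in Step~1, i.e.\ making sure that the BGK approximation rate quoted from the literature matches the precise statement (including the $C(d)\norm{a}_{L^{\infty}(-K,K)}$ constant and the $\sqrt{T/\lb}$ dependence). One must also check that the support in $\xi$ of $M$ and $f_{\lb}$ is contained in a common interval $[-K,K]$ depending on $\norm{u_{0}}_{L^{\infty}}$, which is standard via the $L^{\infty}$ maximum principle for BGK, but which explains the localisation of $\norm{a}_{L^{\infty}}$ in the estimate.
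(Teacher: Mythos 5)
Your proposal is correct and follows essentially the same route as the paper: the triangle inequality through the auxiliary BGK solution $f_{\lb}$, citation of the Perthame hydrodynamic-limit rates (modulus of continuity in the $L^{1}$ case, $\sqrt{T/\lb}$ in the $BV$ case) for $\norm{f_{\lb}-M}_{V_1}$, the Duhamel/representation formula of Lemma~\ref{lemexistence} for $f-f_{\lb}$, and the $\Chi$ isometry $\norm{M-M_{\lb}}_{V_1}=\norm{u-u_{\lb}}_{L^1}$ to close the estimate. The only cosmetic difference is that you keep the Duhamel integral and pull out the rate by monotonicity in $s$, whereas the paper bounds the source directly by its value at $T$; the content is identical.
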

\begin{rem}
The rate of convergence in $\sqrt{\frac{T}{\lb}}$ expresses the fact that at fixed $\lb$, the discrepancy between the observations and the model variables tend to increase with time. The further we want to advance in time, the bigger we need to choose $\lb$. Notice also that in this situation the optimal $\lb$ is the largest possible as it is often the case with complete observations.
\end{rem}
\begin{proof}[Proof of Theorem~\ref{thmkinform1}]
From the triangular inequality we decompose
\begin{equation*}
\norm{f(T) -  M(T)}_{V_1} \leq \norm{f(T) -
f_{\lb}(T)}_{V_1} + \norm{f_{\lb}(T) -
 M(T)}_{V_1}.
\label{triangularineq}
\end{equation*}
On the one hand, in \cite{Perthame2002}, the author studies some properties of $f_{\lb}$ as $\lb$ tends to infinity, also called the hydrodynamic limit. They prove (Section 4.6, Thm 4.6.1):
\begin{itemize}
 \item if $ u_0 \in L^{\infty}\cap L^1 \cap BV(\R^d)$, and setting $K = \norm{u_0}_{L^{\infty}}$,
\begin{align*}
	\norm{u_{\lb}(T) -  u(T)}_{L^1(\R^d)} &\leq \norm{f_{\lb}(T) -  M(T)}_{V_1},  \\
	&\leq C(d)\norm{a}_{L^{\infty}(-K, K)}\sqrt{\frac{T\norm{u_0}_{BV(\R^d)}}{\lb}}; 
\end{align*}
 \item if only $u_0 \in L^1(\R^d)$ is assumed, we have for some modulus of continuity $\omega(T,\cdot)$: 
$$\norm{u_{\lb}(T) -  u(T)}_{L^{1}( \R^d)} \leq \norm{f_{\lb}(T) -  M(T)}_{L^{1}(\R_x^d \times \R_{\xi})} \leq \omega(T, \frac{1}{\lb}).$$
\end{itemize}
On the other hand, according to \eqref{conservlawkin} and \eqref{DA_kintime2}, $f$ and $f_{\lb}$
satisfy
\begin{equation*}
\partial_{t}(f - f_{\lb})+ a(\xi) \cdot \nabla_x (f - f_{\lb})+ \lb (f - f_{\lb}) = \lb ( M - M_{\lb}).
\label{fflambda1}
\end{equation*}
Therefore, we can use the representation formula of Lemma~\ref{lemexistence} and get
\begin{equation*}
(f - f_{\lb})(t, x, \xi)  = (f - f_{\lb})(0, x, \xi)e^{-\lb t} \\+ \lb
\int_{0}^{t}{e^{-\lb s}( M - M_{\lb})(t-s, \xi, x - a(\xi) s)ds}.
\label{reprform}
\end{equation*}
We end up with
\[
\norm{(f - f_{\lb})(T)}_{V_1} \leq \norm{(f -
f_{\lb})(0)}_{V_1} e^{-\lb T}+ (1 - e^{-\lb T})\norm{ M(T) -
M_{\lb}(T)}_{V_1}. 
\]
Since
$$\norm{ M(T) - M_{\lb}(T)}_{V_1} =  \norm{\Chi_{u} (\xi) - \Chi_{u_{\lb}} (\xi)}_{V_1} = \norm{ u(T) - u_{\lb}(T)}_{L^{1}(\R_{x})},$$ 
we can still use the convergence rates expressed above to upper bound $\norm{ f(T) - f_{\lb}(T)}_{V_1}$ in both cases. This concludes the proof.
\end{proof}

\subsection{Noisy observations in the scalar case}
\label{secnoise}
It is more realistic to consider that observations are noisy. Here we restrict our analysis to additive noise in the observer equation. Appropriate distribution spaces representing this phenomena are the dual Sobolev spaces. The noise is indeed not negligible in the strong $L^{2}$ norm because of its high oscillations, but it is smaller in a weaker $H^{-\alpha}$ norm. This modeling enables us to prove convergence theorems for velocity averaged quantities like $\rho_{\psi}(t,x) = \int_{\R_{\xi}}{f(t,x,\xi)\psi(\xi)d\xi}$ that are known to lie in a better space than the microscopic density $f(t,x,\xi)$. This property results from averaging lemmas \cite{golse1988regularity,golse1985resultat}. In the following, the proofs are based on the same ideas as those developed in \cite{golse1988regularity,golse1985resultat,bouchut1999averaging}.

\subsubsection{Linear coefficient}
\label{subsecnoise1}

We begin with the steady state situation in the case of a linear coefficient for advection. The kinetic variable $\xi$ represents then a multidimensional velocity, and we have $\xi \in \R^{d}$ (as $x \in \R^{d}$). As stated previously, the noise is supposed to be an $\alpha^{th}$ space derivative of an $L^{2}$ function. Let us consider the solution of
\beal
\xi \cdot \nabla_{x}f(x,\xi) + \lb f(x,\xi) = \lb M(x,\xi) + \lb \frac{\partial^{\alpha}E}{\partial x^{\alpha}}, \quad E(t,x,\xi) \in V_{2},
\label{eqnoise10lin}
\eeal
with
\be
\xi \cdot\nabla_{x}M(x,\xi) = Q (x,\xi) \in  V_{2},
\label{eqnoise20lin}
\ee
where $\alpha$ is a multi-index $(\alpha_{1}, \alpha_{2}, \ldots \alpha_{n})$ satisfying $\abs{\alpha} = \alpha_{1} + \alpha_{2}+\ldots+\alpha_{n}$, $$\frac{\partial^{\alpha}E}{\partial x^{\alpha}}=\frac{\partial^{\alpha}E}{\partial x_{1}^{\alpha_{1}}x_{2}^{\alpha_{2}}\ldots x_{n}^{\alpha_{n}}}.$$
\begin{rem}
The noise on the data in a real situation might rather be located on the macroscopic variables. It could be more realistic to translate this macroscopic noise at the kinetic level. However, this transcription is not straightforward and remains to be done. For now, we limit ourselves to the case where we introduce the noise directly on the kinetic observer equation. In other words, the errors introduced by the reconstruction of $M(t,x,\xi)$ from the macroscopic data are considered at their first order.
\label{remnoise}
\end{rem}
We denote $g(x,\xi) = f(x,\xi) - M(x,\xi)$ the error between the estimator and observations. Subtracting \eqref{eqnoise10lin} and \eqref{eqnoise20lin}, the equation for $g$ is
\be
\xi\cdot \nabla_{x}g(x,\xi) + \lb g(x,\xi) =  \lb \frac{\partial^{\alpha}E}{\partial x^{\alpha}} - Q (x,\xi),
\label{eqnoise30lin}
\ee
with an explicit solution (Lemma~\ref{lemexistence})
\[
g(x, \xi) = -\frac{1}{\lb}\int_{0}^{+\infty}{\left(Q(x - \frac{\xi}{\lb}s,\xi) + \lb \frac{\partial^{\alpha}E}{\partial x^{\alpha}}(x - \frac{\xi}{\lb}s,\xi)\right)e^{-\lb s}ds}, 
\]
leading to the upper bound of the error in the $L^{2}$ norm
\be
\norm{g}_{V_2} \leq \frac{\norm{Q}_{V_2}}{\lb} + \norm{\dEalpha}_{V_2}.
\label{l2normerrorlin}
\ee
The nudging coefficient tends to filter the collision term. Nevertheless, it has no effect on the noise. It is then to be expected that in a usual norm as $L^{2}$, we do not get rid of noisy terms. But it is also worth noticing, that this noise is not amplified. This will be all the more important in the evolutionary case in the next section, as the effects of noise usually tend to add up with time since it enters the equations as a source term.

By means of Fourier analysis, we can obtain a better upper bound in a stronger norm for $g$, but considering a weaker norm for the noise. Let us denote by $\widehat g (k,\xi)$ the space Fourier transform of $g$
\[
\widehat g(k,\xi) = \int_{\R^{N}}{g(x,\xi)e^{ik \cdot x}dx}. 
\]
$\widehat E$ and $\widehat Q$ are defined in the same manner. Moreover, we will work on the averaged quantity 
\[
\rho_{\psi}(t,x) = \int_{\R_{\xi}}{g(t,x,\xi)\psi(\xi)d\xi},
\]
where $\psi$ is a smooth compactly supported function. We notice that
\[
\widehat \rho_{\psi}(t,k) = \int_{\R_{\xi}}{\widehat g(t,k,\xi)\psi(\xi)d\xi}. 
\]
 We claim that the averaged quantity $\rho_{\psi}$ lies in a better space than $L^{2}(\left[0,T\right], L^{2}(\R_{x}))$, and its homogeneous seminorm tends to zero when $\norm{E}_{V_{2}}$ becomes smaller. In the following theorem, $\norm{\rho}_{ \dot H^{s}(\R_{k}^{d})}$ represents the homogeneous Sobolev norm of $\rho$.
\begin{thm}
\label{thmnoise1lin}
Let $M(x,\xi) \in V_{2}$ be solution of \eqref{eqnoise20lin} and $f(x,\xi) \in V_{2}$ be solution of the noisy data assimilation problem \eqref{eqnoise10lin} for $\abs{\alpha} < \frac{1}{2}$. Then, there exists $\gamma$, $0 < \gamma < 1 - 2\abs{\alpha}$ and constants $K_1(\alpha, \gamma), K_2(\alpha, \gamma)$ such that
\beal
\norm{ \widehat \rho_{\psi}}_{\dot H^{\gamma/2}(\R_{x})}^2 \leq K_1 \norm{E}_{V_2}^{\frac{2 - \gamma}{1 + \alpha}}  \norm{Q}_{V_2}^{\frac{2\alpha + \gamma}{1 + \alpha}},
\label{errorlin}
\eeal
and therefore the error tends to zero as $\norm{E}_{V_2}$ tends to zero. This inequality is obtained for an optimal value of~$\lb$, 
\[
\lb_{\text{opt}} =  K_2  \left(\frac{\norm{Q}_{V_2}}{\norm{E}_{V_2}}\right)^{\frac{1}{1+\alpha}}.
\]
\end{thm}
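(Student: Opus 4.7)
The plan is to Fourier-transform the equation for the error $g = f - M$ in the spatial variable $x$, to apply Cauchy-Schwarz in $\xi$ so that the kernel $(\lambda + i\xi\cdot k)^{-1}$ contributes an averaging gain of $1/(\lambda |k|)$ after integration against $\psi$, and then to interpolate by a high/low frequency split in $k$ between this averaging estimate (used on $|k| > R$) and a crude bound coming from \eqref{l2normerrorlin} (used on $|k| \le R$), optimising the cut-off $R$ together with $\lambda$.

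The first step is to Fourier-transform \eqref{eqnoise30lin}, giving
\[
\widehat g(k,\xi) = \frac{\lambda (ik)^\alpha \widehat E(k,\xi) - \widehat Q(k,\xi)}{\lambda + i\xi\cdot k}, \qquad \widehat \rho_\psi(k) = \int_{\R^d} \psi(\xi)\, \widehat g(k,\xi)\, d\xi,
\]
so that Cauchy-Schwarz in $\xi$ after isolating the denominator yields
\[
|\widehat \rho_\psi(k)|^2 \le J(k) \int_{\R^d} \bigl|\lambda (ik)^\alpha \widehat E - \widehat Q\bigr|^2 d\xi, \qquad J(k) = \int_{\R^d} \frac{\psi(\xi)^2}{\lambda^2 + (\xi\cdot k)^2}\, d\xi.
\]
The compact support of $\psi$ combined with a rotation aligning the first coordinate with $k/|k|$ and the substitution $\tau = |k|\xi_1$ then produces the uniform averaging bound $J(k) \le C_\psi/(\lambda |k|)$ for every $k \neq 0$, which is the key gain of the argument.

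Next I would fix a frequency cut-off $R > 0$ and split $\int |k|^\gamma |\widehat \rho_\psi|^2\, dk$ according to $|k| \le R$ and $|k| > R$. On the high-frequency part I plug in the averaging bound together with $|(ik)^\alpha|^2 \le |k|^{2|\alpha|}$ and $|a - b|^2 \le 2|a|^2 + 2|b|^2$; the hypothesis $\gamma < 1 - 2|\alpha|$ makes both $\gamma - 1 + 2|\alpha|$ and $\gamma - 1$ strictly negative, so each power of $|k|$ can be majorised by its value at $R$ and Plancherel yields a bound by $C\bigl(\lambda R^{\gamma - 1 + 2|\alpha|} \|E\|_{V_2}^2 + \lambda^{-1} R^{\gamma - 1} \|Q\|_{V_2}^2\bigr)$. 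On the low-frequency part I use instead the pointwise estimate $|\widehat g|^2 \le 2|\lambda(ik)^\alpha \widehat E - \widehat Q|^2/\lambda^2$, Cauchy-Schwarz against $\psi$ and Plancherel to obtain a bound by $C\bigl(R^{\gamma + 2|\alpha|} \|E\|_{V_2}^2 + \lambda^{-2} R^{\gamma} \|Q\|_{V_2}^2\bigr)$.

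Finally, the two $\|E\|_{V_2}^2$-contributions (respectively the two $\|Q\|_{V_2}^2$-contributions) balance when $\lambda \sim R$, and then all four terms collapse to $C\bigl(R^{\gamma + 2|\alpha|} \|E\|_{V_2}^2 + R^{\gamma - 2} \|Q\|_{V_2}^2\bigr)$. Setting the $R$-derivative of this expression to zero gives $R \sim (\|Q\|_{V_2}/\|E\|_{V_2})^{1/(1+|\alpha|)}$, which prescribes the announced $\lambda_{\mathrm{opt}}$ and, after substitution, produces the exponents $(2-\gamma)/(1+|\alpha|)$ on $\|E\|_{V_2}$ and $(2|\alpha|+\gamma)/(1+|\alpha|)$ on $\|Q\|_{V_2}$ in \eqref{errorlin}. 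I expect the main technical point to be establishing the averaging bound $J(k) \lesssim 1/(\lambda |k|)$ uniformly in $k$ rather than only at high frequency, since without uniformity the low-frequency bookkeeping would require an additional parameter; the remainder of the proof is elementary exponent calculus.
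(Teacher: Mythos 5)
Your proposal is correct and follows essentially the same route as the paper: Fourier transform in $x$, Cauchy--Schwarz in $\xi$ with the velocity-averaging kernel bound $\min\bigl(\lambda^{-2},(\lambda\abs{k})^{-1}\bigr)$, a high/low frequency split at a cutoff, and optimisation yielding the same exponents and the same scaling $\lambda_{\text{opt}}\sim(\norm{Q}_{V_2}/\norm{E}_{V_2})^{1/(1+\alpha)}$. The only (cosmetic) difference is the order of optimisation --- you balance the four terms by taking $\lambda\sim R$ before minimising in $R$, whereas the paper first minimises the high-frequency bound in $\lambda$ at fixed $k_0$ and then in $k_0$ --- and the uniform-in-$k$ averaging bound you flag as the main technical point is exactly what the paper's rotation-plus-substitution computation delivers.
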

\begin{rem}
Let us give an example of the inequality \eqref{errorlin} and show the interest of this theorem: a particular norm for the error can vanish even if the $L^2$ norm does not. We choose the noise
$$E(x) = \varepsilon^r \cos\Bigl(\frac{x}{\varepsilon}\Bigr), \quad \dEalpha(x) = \varepsilon^{r - \alpha} \cos\Bigl(\frac{x}{\varepsilon} + \alpha \frac{\pi}{2}\Bigr).$$
In this example,  $\partial_{\alpha}E(x)$ may be big in $L^{2}$ norm, even if E is not. With $\gamma = 1 - 2\alpha$, we have formally from \eqref{l2normerrorlin}
\begin{equation*}
\norm{g}_{V_2}\leq K_1 \varepsilon^{\frac{1}{1+\alpha}} + K_2\varepsilon^{r - \alpha},
\label{examplebruit}
\end{equation*}
which does not necessarily vanish with $\varepsilon$ if $r \leq \alpha$ whereas Theorem~\ref{thmnoise1lin} yields
\begin{equation*}
\norm{ \widehat \rho_{\psi}}_{\dot H^{\gamma/2}(\R_{x})}^2 \leq K_3\varepsilon^{r + \frac{r\alpha}{1+\alpha}},
\label{examplebruit2}
\end{equation*}
which vanishes inconditionally with $\varepsilon$.
\end{rem}
\begin{rem} 
Compared to our previous results, we are eventually capable of giving an expression of a bounded optimal $\lb$. With the noise here indeed, the theorems of Section~\ref{subsec:nudgingscl} are no longer valid ($\lb$ cannot be taken as big as we want) and we need to carefully choose the nudging coefficient in order not to amplify the noise. See Section~\ref{secnumresults} for the numerical illustration of this theorem on Burgers' equation.
\end{rem}
\begin{proof}[Proof of Theorem~\ref{thmnoise1lin}]
Performing space Fourier transform on \eqref{eqnoise30lin} yields
\[
\xi\cdot ik \widehat g + \lb \widehat g = -\widehat Q + \lb k^{\alpha}\widehat E. 
\]
Then we can deduce an expression of the space Fourier transform of $g$:
\[
\widehat g = \frac{-\widehat Q}{\lb + i \xi\cdot k} + \lb k^{\alpha} \frac{\widehat E}{\lb + i \xi\cdot k}. 
\]
Young's inequality yields
\[
\megaabs{\widehat \rho_{\psi}(t,k)}^2 \leq 2 \megaabs{\int_{\R_{\xi}}{\frac{-\widehat Q(k,\xi)\psi(\xi)}{\lb + i \xi\cdot k}d\xi}}^2 + 2\lb^2 \abs{k}^{2\abs{\alpha}}\megaabs{\int_{\R_{\xi}}{\frac{-\widehat E(k,\xi)\psi(\xi)}{\lb + i \xi\cdot k}d\xi}}^2. 
\]
Applying Cauchy-Schwarz inequality leads to
\begin{align}
\megaabs{\widehat \rho_{\psi}(t,k)}^2 & \leq  \frac{2}{\lb^2} \megaabs{\int_{\R_{\xi}}{\widehat Q^2(k,\xi)d\xi}}\megaabs{\int_{\R_{\xi}}{\frac{\psi(\xi)^2}{1 + \frac{(\xi\cdot k)^2}{\lb^2}}d\xi}} \nonumber\\
 & \hspace{2cm}+ 2 \abs{k}^{2\abs{\alpha}} \megaabs{\int_{\R_{\xi}}{\widehat E^2(k,\xi)d\xi}} \megaabs{\int_{\R_{\xi}}{\frac{\psi(\xi)^2}{1 + \frac{(\xi\cdot k)^2}{\lb^2}}d\xi}} \nonumber\\
&\hspace{-0.7cm}\leq 2 \norm{\psi}_{L^{\infty}(\R_{\xi})} ^2 C_{\psi} \min\left(1,\frac{\lambda}{\abs{k}}\right)\left( \frac{1}{\lb^2} \int_{\R_{\xi}}{\abs{\widehat Q}^2d\xi}+ \abs{k}^{2\abs{\alpha}} \int_{\R_{\xi}}{\abs{\widehat E(k,\xi)}^2d\xi}\right). \label{errorbound}
\end{align}
The last line uses on the one hand the compact support of $\psi$ with
\[
\int_{\R_{\xi}}{\frac{\psi(\xi)^{2}}{1 + \frac{(\xi\cdot k)^2}{\lb^2}}d\xi} \leq \norm{\psi}_{L^{\infty}}^{2} C_{\psi}, 
\]
but also the usual observation leading to averaging lemmas i.e.
\[
\int_{\R_{\xi}}{\frac{\psi(\xi)^{2}}{1 + \frac{(\xi\cdot k)^2}{\lb^2}}d\xi} \leq \norm{\psi}_{L^{\infty}}^{2} C_{\psi} \frac{\lambda}{\abs{k}}, 
\]
where $C_{\psi}$ is  constant related to the support of $\psi$. To prove it, we choose an orthonormal basis for the $\xi$ space, with first vector $\xi_{1} = \frac{k}{\abs{k}}$. The integration in the orthogonal directions of $\xi_{1}$ yields the constant $C_{\psi}$. In the direction of $k$, the inequality reduces to the computation of
\[
\int_{-\infty}^{\infty}{\frac{1}{1 + (\xi_{1}\frac{k}{\abs{k}}\cdot \frac{k}{\lb})^2}d\xi_{1}} = \int_{-\infty}^{\infty}{\frac{1}{1 + \xi_{1}^{2} \frac{\abs{k}^{2}}{\lb^{2}}}d\xi_{1}} = \frac{\lb}{\abs{k}}\int_{-\infty}^{\infty}{\frac{1}{1 + \eta^{2}}d\eta} = \pi \frac{\lb}{\abs{k}}. 
\]
Given the value of $\abs{k}$, either $1$ or $\frac{\lb}{\abs{k}}$ should be chosen in the upper bound \eqref{errorbound} and that is why we introduce $k_{0} \in ]0,+\infty[$ in order to separate the integration domain. Now let us choose $\gamma$ such that $0 < \gamma < 1 - 2\abs{\alpha}$. Then the upper bound of $\megaabs{\rho_{\psi}}^2$ in \eqref{errorbound} is clearly integrable over $\mathcal D_{k_0} = \left\{ k\in \R_{k}, s.t. \ \abs{k} > k_{0}\right\}$ and
\[
\int_{\mathcal D_{k_0}}{\abs{k}^{\gamma}\megaabs{\widehat \rho_{\psi}(t,k)}^2 dk} \leq J_1(\lb, k_{0}), 
\]
where
\[
J_1(\lb, k_{0}) = 2  C_{\psi}\norm{\psi}_{L^{\infty}(\R_{\xi})} ^2 \left( \norm{Q}_{V_2} ^2 \frac{1}{\lb} \frac{1}{k_{0}^{1 - \gamma}}
 +\norm{E}_{V_2} ^2 \lb \frac{1}{k_{0}^{1 - 2\abs{\alpha} - \gamma}}
\right). 
\]
The optimization of this function with respect to $\lb$ leads to
\be
\lb_{\text{opt}}(k_{0}) = \frac{\norm{Q}_{V_2}}{\norm{E}_{V_2}} \frac{1}{k_{0}^{\abs{\alpha}}},
\label{lbopt1lin}
\ee
and
\begin{equation*}
J_1(\lb_{\text{opt}}(k_{0}), k_{0}) = 4 C_{\psi}  \norm{\psi}_{L^{\infty}(\R_{\xi})} ^2 \left( \norm{Q}_{V_2}\norm{E}_{V_2} \frac{1}{k_{0}^{1 - \abs{\alpha}-\gamma}}\right).
\label{jlambdaoptlin}
\end{equation*}
Besides, the integration over $\R^d - \mathcal D_{k_0}$ combined with the value of $\lb$ in \eqref{lbopt1lin} leads
\[
\int_{\abs{k} < k_0}{\abs{k}^{\gamma}\megaabs{\widehat \rho_{\psi}(t,k)}^2 dk} \leq J_2(\lb, k_{0}), 
\]
where
\[
J_2(\lb_{\text{opt}}(k_0), k_{0}) = 4  C_{\psi}\norm{\psi}_{L^{\infty}(\R_{\xi})} ^2 \norm{E}_{V_2} ^2 k_{0}^{2\alpha + \gamma}. 
\]
Eventually, 
\[
\norm{\widehat \rho_{\psi}}_{\dot H^{\gamma/2}(\R_{k}^{d})}^2 \leq J_1(\lb_{\text{opt}}(k_0), k_{0}) + J_2(\lb_{\text{opt}}(k_0), k_{0}) = J(\lb_{\text{opt}}(k_{0}), k_0). 
\]
The previous bound clearly admits a minimum in $k_{0}$ since $J(\lb_{\text{opt}}, \cdot)$ is contintuous on $\left] 0, +\infty \right[$ and 
$$J(\lb_{\text{opt}}(k_{0}), k_0) \xrightarrow[k_{0} \to +\infty] {}+\infty, \quad J(\lb_{\text{opt}}(k_{0}), k_0) \xrightarrow[k_{0} \to +\infty] {} +\infty.$$
The optimisation of $J$ with respect to $k_{0}$ leads to
\[
\argmin_{k_{0}} J = \left( \frac{\norm{Q}_{V_2}}{\norm{E}_{V_2}}\right)^{ \frac{1}{1+\alpha}} \left( \frac{1 - \gamma - \alpha}{2\alpha + \gamma} \right)^{ \frac{1}{1+\alpha}},
\]
and
\[
\min J = 4  C_{\psi}\norm{\psi}_{L^{\infty}(\R_{\xi})}^2 \left(\!\!
 \Bigl( \!\!\frac{2\alpha + \gamma}{1 - \gamma - \alpha} \Bigr)^{ \frac{1 - \alpha - \gamma}{1+\alpha}}
\!\!\!\!\!\!+
 \Bigl( \frac{1 - \gamma - \alpha}{2\alpha + \gamma} \Bigr)^{ \frac{2\alpha + \gamma}{1+\alpha}}
 \right) 
 \norm{E}_{V_2}^{\frac{2 - \gamma}{1 + \alpha}}  \norm{Q}_{V_2}^{\frac{2\alpha + \gamma}{1 + \alpha}}.
\]
Moreover, taking into account the optimal value of $k_{0}$, the optimal $\lb$ is
\[
\lb_{\text{opt}} =  \Bigl( \frac{1 - \gamma - \alpha}{2\alpha + \gamma} \Bigr)^{ \frac{\alpha}{1+\alpha}}  \left(\frac{\norm{Q}_{V_2}}{\norm{E}_{V_2}}\right)^{ \frac{1}{1+\alpha}}. 
\]
\end{proof}

\begin{rem}
The same study is extended to the evolution case. It uses the Fourier transform in $x$, $\xi$ but not in $t$ (see \cite{bouchut1999averaging}). Therefore, it suits better functions defined on $\left[ 0,T\right]\times \R_{x}^d\times\R_{\xi}$. However, the computation is tedious and does not bring more insights about the noisy case. We do not detail it here.
\end{rem}
 

\subsubsection{Nonlinear coefficient}

Let us now consider a nonlinear coefficient $a \in L^{\infty}_{loc}(\R_{\xi}, \R^{d})$, $\xi \in \R$, which corresponds to the kinetic formulation of a general scalar conservation law of the form \eqref{conservlaw} with $p = 1$. In this situation, a non-degeneracy assumption is required to recover the regularity of averages in $\xi$. More precisely, for any direction $\sigma \in \R^{d}$, such that $\abs{\sigma} = 1$, $a(\xi)\cdot \sigma$ has to be non constant to enable the averaging procedure to give regularity in $x$ in the direction $\sigma$.

\label{subsecnoise2}
Let us study the steady state situation. As stated previously, the noise is an $\alpha^{th}$ space derivative of an $L^{2}$ function. We consider the solution of
\beal
a(\xi)\cdot \nabla_{x}f(x,\xi) + \lb f(x,\xi) = \lb M(x,\xi) + \lb \frac{\partial^{\alpha}E}{\partial x^{\alpha}},
\label{eqnoise10}
\eeal
with
\be
a(\xi)\cdot\nabla_{x}M(x,\xi) = Q (x,\xi) \in  L^{2}(\R_{x}^{N}\times\R_{\xi}),
\label{eqnoise20}
\ee
where $a$ is a non-degenerate flux. The error equation writes now:
\be
a(\xi)\cdot \nabla_{x}g(x,\xi) + \lb g(x,\xi) =  \lb \frac{\partial^{\alpha}E}{\partial x^{\alpha}} - Q (x,\xi),
\label{eqnoise30}
\ee
with an explicit solution (Lemma~\ref{lemexistence})
\[
g(x, \xi) = -\frac{1}{\lb}\int_{0}^{+\infty}{\left(Q(x - \frac{a(\xi)}{\lb}s,\xi) + \lb \frac{\partial^{\alpha}E}{\partial x^{\alpha}}(x - \frac{a(\xi)}{\lb}s,\xi)\right)e^{-\lb s}ds}, 
\]
leading to the upper bound of the error in the $L^{2}$ norm
\begin{equation*}
\norm{g}_{V_2} \leq \frac{\norm{Q}_{V_2}}{\lb} + \norm{\dEalpha}_{V_2}.
\label{l2normerror}
\end{equation*}
Let us denote by $\widehat g (k,\xi)$, $\widehat E$, $\widehat Q$ and $\widehat \rho_{\psi}$ the space Fourier transforms of $g$, $E$, $Q$ and $\rho_{\psi}$.
\begin{thm}
\label{thmnoise1}
Let $M(x,\xi) \in V_{2}$ be solution of \eqref{eqnoise20} and $f(x,\xi) \in V_{2}$ be solution of the noisy data assimilation problem \eqref{eqnoise10}. Then, there exists $\beta$, $0< \beta < 1$, $\gamma$, $0 < \gamma < \beta - 2\abs{\alpha}$ and constants $K_1(\alpha, \beta, \gamma), K_2(\alpha, \beta, \gamma)$ such that

\[
\norm{ \widehat \rho_{\psi}}_{\dot H^{\gamma/2}(\R_{x})}^2 \leq K_1 \norm{E}_{V_2}^{\frac{2 - \gamma}{1 + \alpha}}  \norm{Q}_{V_2}^{\frac{2\alpha + \gamma}{1 + \alpha}}, 
\]
which is obtained for an optimal value of $\lb$

\[
\lb_{\text{opt}} =  K_2  \left(\frac{\norm{Q}_{V_2}}{\norm{E}_{V_2}}\right)^{\frac{1}{1+\alpha}}. 
\]
\end{thm}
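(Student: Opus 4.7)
The proof follows the same strategy as for Theorem~\ref{thmnoise1lin}, with the nonlinearity of $a$ handled through a non-degeneracy based averaging estimate. First I would take the space Fourier transform of \eqref{eqnoise30} to obtain
$$\widehat g(k,\xi)=\frac{-\widehat Q(k,\xi)}{\lb+i\,a(\xi)\cdot k}+\lb\,(ik)^{\alpha}\frac{\widehat E(k,\xi)}{\lb+i\,a(\xi)\cdot k}.$$
Multiplying by $\psi(\xi)$, integrating in $\xi$ and applying successively Young's and Cauchy--Schwarz' inequalities exactly as in the linear case gives
$$\megaabs{\widehat\rho_\psi(k)}^{2}\leq 2\,I_\psi(k,\lb)\left(\frac{1}{\lb^{2}}\int_{\R_{\xi}}\abs{\widehat Q(k,\xi)}^{2}\,d\xi+\abs{k}^{2\abs{\alpha}}\int_{\R_{\xi}}\abs{\widehat E(k,\xi)}^{2}\,d\xi\right),$$
where the averaging kernel is
$$I_\psi(k,\lb)=\int_{\R_{\xi}}\frac{\psi(\xi)^{2}}{1+(a(\xi)\cdot k)^{2}/\lb^{2}}\,d\xi.$$

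The key step, and the main obstacle, is to replace the explicit change of variable that was available in the linear case (where $a(\xi)=\xi$ and the direction $\xi_1=\xi\cdot k/\abs{k}$ trivialized the denominator) by a nonlinear averaging estimate based on the non-degeneracy of $a$. Under the assumption that for every $\sigma\in\R^{d}$, $\abs{\sigma}=1$, the function $\xi\mapsto a(\xi)\cdot\sigma$ is non constant, one obtains a uniform modulus of continuity
$$\sup_{\abs{\sigma}=1}\,\text{meas}\bigl\{\xi\in\text{supp}\,\psi:\abs{a(\xi)\cdot\sigma}\leq\delta\bigr\}\leq C\,\delta^{\beta}$$
for some $\beta\in(0,1)$ depending only on $a$ and $\psi$. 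Splitting the $\xi$-integral defining $I_\psi$ at the level set $\{\abs{a(\xi)\cdot k/\abs{k}}\leq\delta\}$ and optimizing in $\delta$ then yields, in the spirit of \cite{golse1988regularity,bouchut1999averaging}, the averaging bound
$$I_\psi(k,\lb)\leq C_\psi\,\norm{\psi}_{L^{\infty}(\R_{\xi})}^{2}\,\min\!\Bigl(1,\bigl(\lb/\abs{k}\bigr)^{\beta}\Bigr),$$
which is the nonlinear analog of the bound used in Theorem~\ref{thmnoise1lin}, where $\beta=1$ thanks to linearity.

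Once this estimate is in hand, the rest of the argument is strictly parallel to the proof of Theorem~\ref{thmnoise1lin}. I would introduce a threshold $k_{0}>0$ and split the integration of $\abs{k}^{\gamma}\megaabs{\widehat\rho_\psi(k)}^{2}$ over the two regions $\abs{k}<k_{0}$ and $\abs{k}\geq k_{0}$, using $I_\psi\leq C_\psi\norm{\psi}_{L^{\infty}}^{2}$ on the first and $I_\psi\leq C_\psi\norm{\psi}_{L^{\infty}}^{2}(\lb/\abs{k})^{\beta}$ on the second. The two resulting contributions $J_{1}(\lb,k_{0})$ and $J_{2}(\lb,k_{0})$ are finite precisely when $\gamma<\beta-2\abs{\alpha}$, which is the assumption of the theorem, and which in particular forces $\abs{\alpha}<\beta/2$. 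Minimizing successively in $\lb$ (at fixed $k_{0}$) and then in $k_{0}$ reproduces, after the same elementary computations as in the linear case, the announced form of $\lb_{\text{opt}}$ and of $\norm{\widehat\rho_{\psi}}_{\dot H^{\gamma/2}(\R_{k}^{d})}^{2}$, with constants $K_{1}(\alpha,\beta,\gamma)$ and $K_{2}(\alpha,\beta,\gamma)$ now inheriting the dependency on the non-degeneracy exponent $\beta$ of $a$.
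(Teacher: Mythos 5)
Your proposal follows essentially the same route as the paper: Fourier transform of the error equation, Young and Cauchy--Schwarz, the non-degeneracy averaging bound $\int_{\R_{\xi}}\psi^{2}/(1+(a(\xi)\cdot k)^{2}/\lb^{2})\,d\xi\lesssim\min(1,(\lb/\abs{k})^{\beta})$, the restriction $\gamma<\beta-2\abs{\alpha}$, and the split at a threshold $k_{0}$ with successive optimization in $\lb$ and $k_{0}$, exactly as in the linear case. The only (harmless) difference is that you derive the kernel bound from a level-set measure estimate on $a(\xi)\cdot\sigma$, whereas the paper simply takes that bound as the definition of non-degeneracy and refers the final optimization to the linear proof and to the cited thesis; your claim that the optimization reproduces the stated exponents, with only the constants depending on $\beta$, is indeed correct.
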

\begin{proof}[Proof of Theorem~\ref{thmnoise1}]
By means of Fourier transform on \eqref{eqnoise30} we obtain
\[
\widehat g = \frac{-\widehat Q}{\lb + i a(\xi)\cdot k} + \lb k^{\alpha} \frac{\widehat E}{\lb + i a(\xi)\cdot k}. 
\]
Young's inequality followed by a Cauchy-Schwarz inequiality as in the linear case yields
\begin{multline*}
\megaabs{\widehat \rho_{\psi}(t,k)}^2  \leq \frac{2}{\lb^2} \megaabs{\int_{\R_{\xi}}{\widehat Q^2(k,\xi)d\xi}}  \megaabs{\int_{\R_{\xi}}{\frac{\psi(\xi)^2}{1 + \frac{(a(\xi)\cdot k)^2}{\lb^2}}d\xi}}\\
+ 2 k^{2\alpha} \megaabs{\int_{\R_{\xi}}{\widehat E^2(k,\xi)d\xi}} \megaabs{\int_{\R_{\xi}}{\frac{\psi(\xi)^2}{1 + \frac{(a(\xi)\cdot k)^2}{\lb^2}}d\xi}}. 
\end{multline*}
Under the non-degeneracy condition for the flux $a(\xi)$, i.e. the existence of $0 < \beta < 1$ s.t.
\[
\int_{\R_{\xi}}{\frac{\psi(\xi)^{2}}{1 + \frac{(a(\xi)\cdot k)^2}{\lb^2}}d\xi} \leq \norm{\psi}_{L^{\infty}(\R_{\xi})}^{2} C_{\psi} \frac{\lambda^{\beta}}{\abs{k}^{\beta}}, 
\]
we can state that

\[
\megaabs{\widehat \rho_{\psi}(t,k)}^2 \leq 2 C_{\psi}\norm{\psi}_{L^{\!\infty}(\!\R_{\xi})} ^2 \min\Bigl( 1, \frac{\lb^{\beta}}{\abs{k}^{\beta}}\Bigr)\!\! \left( \frac{1}{\lb^2}\!\!\!\int_{\R_{\xi}}{\!\!\!\abs{\widehat Q}^2d\xi} + k^{2\alpha}\!\! \int_{\R_{\xi}}{\!\!\!\abs{\widehat E(k,\xi)}^2d\xi} \right)\!\!\!. 
\]
Now let us choose $\gamma$ such that $0 < \gamma < \beta - 2\abs{\alpha}$. From this moment on, the proof does not differ from the one of Theorem~\ref{thmnoise1lin}. it consist in minimizing the upper bound first with respect to $\lb$ and then with respect to a limiting frequency that we denoted $k_0$. We will not detail it furthermore as a complete study is available in \cite{boulangerthese}.
\end{proof}
\begin{rem}
 The proofs of Theorem~\ref{thmnoise1lin} and Theorem~\ref{thmnoise1} are based on finding an optimal value for $\lb$. The existence of an optimum is illustrated on Burgers' equation in Section~\ref{subsec:svtnumerics}, Figure~\ref{lambdaopt}.
\end{rem}

\begin{rem}
As in the linear case, the same study can be performed in the evolution case, using the Fourier transform in $x$, $\xi$ but not in $t$. But heavy computations are needed and we do not detail them in the article.
\end{rem}

\subsection{Time sampling operator in the scalar case}
In the very theoretical case of complete and exact observations, as studied in Section~\ref{subsec:nudgingscl}, the conclusion is that the scalar nudging term can be chosen as big as possible and convergence is ensured. When adding noise, an optimal value for $\lb$ minimizes the covergence error, as explained in Section~\ref{secnoise}. Now we address another difficulty, namely that access to the observations is not possible at all times but only at time $t_k$. Moreover it is not an exact observation but a mean of it over a small period of time. For that purpose, we define a mollifier $\varphi$ i.e. a nonnegative smooth and compactly supported function with integral one. From this mollifier we build a regularizing sequence $$\left\{ \varphi_{\sigma} =  \frac{1}{\sigma}\varphi(\frac{s}{\sigma}), \ \sigma \in \R^+ \right\}.$$
Then, considering that data come from a hyperbolic conservation law of type \eqref{conservlaw} admitting a kinetic formulation for the density $M(t,x,\xi)$:
\beal
\partial_{t}M(t,x,\xi) + a(\xi)\cdot\nabla_{x}M(t,x,\xi) = Q(t,x,\xi),
\label{eqobstime}
\eeal
for some collision term Q, we work with the following equation of the estimator:
\beal
\partial_{t}f(t,x,\xi) + a(\xi)\cdot\nabla_{x}f(t,x,\xi) +  \lambda \sum_{k\in I}{\varphi_{\sigma}(t-t_{k})f(t_{k},x,\xi)}= \lambda \sum_{k\in I}{\varphi_{\sigma}(t-t_{k})M(t_{k},x,\xi)},
\label{eqobstime2}
\eeal
where $\left\{ t_{k},\, k \in I\right\}$ are the observation times.
\begin{thm}
Let us suppose that $M$ is solution of \eqref{eqobstime} and $f$ is solution of
\eqref{eqobstime2} such that $f - M$ is of time bounded variations. Provided that $\Card(k, t_k \leq T - \sigma) \neq 0$, then,
\begin{multline*}
\norm{f(T) - M(T)}_{V_1}\leq \norm{f_0 -  M_0}_{V_1}e^{-\lb \Card(k, t_k \leq T - \sigma)} 
\\+ \sigma \norm{\partial_t (f - M)}_{L_t^{\infty}(M_x^1(\R_x), L^{1}(\R_{\xi}))} 
+ \frac{1}{\lb}\sup_{0 < t \leq T}\norm{Q(t)}_{V_1}. 
\end{multline*}
\label{thmsamplingtime}
\end{thm}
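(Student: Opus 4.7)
The plan is to reduce the proof to a standard transport-with-damping representation by absorbing the sampled nudging into an effective time-dependent absorption coefficient. Setting $g = f - M$ and subtracting \eqref{eqobstime} from \eqref{eqobstime2}, one obtains
\begin{equation*}
\partial_t g + a(\xi)\cdot\nabla_x g = -\lambda \sum_{k\in I}\varphi_\sigma(t-t_k)\,g(t_k,x,\xi) - Q(t,x,\xi).
\end{equation*}
The key manipulation is to add and subtract $\lambda\Phi_\sigma(t)\,g(t,x,\xi)$ with $\Phi_\sigma(t) := \sum_k \varphi_\sigma(t-t_k)$, which rewrites the equation as a damped transport equation:
\begin{equation*}
\partial_t g + a(\xi)\cdot\nabla_x g + \lambda\Phi_\sigma(t)\,g = \lambda\sum_k \varphi_\sigma(t-t_k)\bigl[g(t,\cdot)-g(t_k,\cdot)\bigr] - Q.
\end{equation*}

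Next I would invoke Lemma~\ref{lemexistence} generalised to time-dependent absorption, i.e. the method of characteristics along $x \mapsto x - a(\xi)(T-s)$ combined with the integrating factor $\mathcal{A}(s,T) := \exp\!\bigl(-\lambda\int_s^T \Phi_\sigma(\tau)\,d\tau\bigr)$. This yields the mild formula
\begin{equation*}
g(T,x,\xi) = \mathcal{A}(0,T)\,g_0(x-a(\xi)T,\xi) + \int_0^T \mathcal{A}(s,T)\,\Bigl(\lambda\sum_k \varphi_\sigma(s-t_k)\bigl[g(s)-g(t_k)\bigr] - Q(s)\Bigr)\bigl(x-a(\xi)(T-s),\xi\bigr)\,ds.
\end{equation*}
Taking the $V_1$ norm and using that the free transport is an $L^1$-isometry produces three summands matching exactly the three terms in the claimed bound.

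The three bounds are then treated in turn. For the initial datum, I observe that each mollifier $\varphi_\sigma(\cdot - t_k)$ with $t_k \leq T-\sigma$ has full mass $1$ inside $[0,T]$, so $\int_0^T \Phi_\sigma(\tau)\,d\tau \geq \Card\{k : t_k \leq T-\sigma\}$ and hence $\mathcal{A}(0,T) \leq e^{-\lambda \Card\{k : t_k \leq T-\sigma\}}$. For the time-sampling source term, I use the bounded-variation assumption on $f-M$: whenever $\varphi_\sigma(s - t_k) \neq 0$ we have $|s-t_k|\leq \sigma$, so $\|g(s)-g(t_k)\|_{V_1} \leq \sigma \|\partial_t g\|_{L^\infty_t(M^1_x,L^1_\xi)}$; then the change of variables $u = \int_s^T \Phi_\sigma$ gives the clean identity
\begin{equation*}
\int_0^T \lambda\Phi_\sigma(s)\,\mathcal{A}(s,T)\,ds = 1 - \mathcal{A}(0,T) \leq 1,
\end{equation*}
so this contribution is $\leq \sigma \|\partial_t(f-M)\|$.

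The term I expect to be most delicate, and the main technical obstacle, is the $Q$ contribution $\int_0^T \mathcal{A}(s,T)\|Q(s)\|_{V_1}\,ds$. Unlike the previous integral, the damping $\lambda\Phi_\sigma$ is concentrated near the sample times, so obtaining the uniform $1/\lambda$ factor requires more than a trivial bound. I would proceed piecewise: split $[0,T]$ along consecutive sample points and exploit that $\int_s^T \Phi_\sigma\,d\tau$ drops by (essentially) one each time $s$ crosses a sample from above, so that $\int_0^T \mathcal{A}(s,T)\,ds$ behaves like a geometric sum $\sum_j e^{-\lambda j}\cdot \Delta t_j \leq \frac{1}{e^{\lambda}-1} \leq \frac{1}{\lambda}$ (up to an absorbed sampling step). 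Bounding $\|Q(s)\|$ by its supremum then yields the $\frac{1}{\lambda}\sup_{0<t\leq T}\|Q(t)\|_{V_1}$ term and completes the proof.
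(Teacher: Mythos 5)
Your proposal follows the same strategy as the paper's proof in all essentials: you rewrite the sampled nudging term as an effective damping $\lambda\Phi_\sigma(t)\,g$ plus the commutator $g(t)-g(t_k)$, you control that commutator by the time-BV assumption together with the fact that $\abs{t-t_k}\leq\sigma$ on the support of $\varphi_\sigma$ (yielding the $\sigma\norm{\partial_t(f-M)}$ term), and you get the decay of the initial error from $\int_0^T\Phi_\sigma(\tau)\,d\tau\geq\Card(k,\,t_k\leq T-\sigma)$. The only structural difference is that you use the Duhamel/characteristics representation with the integrating factor $\mathcal A(s,T)$, whereas the paper multiplies by $\sgn(f-M)$, integrates in $(x,\xi)$ to get a scalar differential inequality for $\norm{(f-M)(t)}_{V_1}$, and applies Gronwall; these two routes are equivalent, and your identity $\int_0^T\lambda\Phi_\sigma(s)\mathcal A(s,T)\,ds=1-\mathcal A(0,T)$ is exactly what Gronwall produces for the BV term in the paper.

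The one point of genuine divergence is the $Q$ term, and there your argument does not close as stated. The claim $\int_0^T\mathcal A(s,T)\,ds\lesssim\sum_j e^{-\lambda j}\Delta t_j\leq\frac{1}{e^\lambda-1}$ requires both that every inter-observation gap satisfies $\Delta t_j\leq 1$ and, more seriously, that the interval between the last observation time and $T$ is negligible: on that interval $\int_s^T\Phi_\sigma=0$, so $\mathcal A(s,T)=1$ and its contribution is its full length, which has no reason to be bounded by $1/\lambda$. So the "absorbed sampling step" hides a real assumption on the density of the observation times near $T$. To be fair, you have correctly identified the weak point of the whole argument: the paper itself obtains this term by reading off the Gronwall output as $\sup_{0<t\leq T}\norm{Q(t)}_{V_1}\frac{1-e^{-\lambda\int_0^T\Phi_\sigma}}{\lambda}$, a step which is exact when the damping is the constant $\lambda$ of the complete-observation case but, for the concentrated bumps $\Phi_\sigma$, implicitly requires $\Phi_\sigma\geq 1$ on the relevant region (the correct Gronwall term is $\int_0^T e^{-\lambda\int_s^T\Phi_\sigma}\norm{Q(s)}_{V_1}\,ds$). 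In short: same approach as the paper for the first two terms, and for the third term your more elaborate geometric-sum repair is no more rigorous than the paper's one-line assertion; a fully correct bound would have to involve the spacing of the $t_k$ (in particular the elapsed time since the last observation before $T$).
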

\begin{proof}[Proof of Theorem~\ref{thmsamplingtime}]
\noindent We perform the difference between \eqref{eqobstime2} and \eqref{eqobstime} and begin
with
\begin{multline*}
\partial_{t}(f - M)(t,x,\xi) + a(\xi)\cdot \nabla_{x}(f - M)(t,x,\xi) \\+ \lambda \sum_{k\in I}{\varphi_{\sigma}(t-t_{k})(f - M)(t_{k},x,\xi)} = - Q(t,x,\xi).
\end{multline*}
\noindent Multiplying by $\sgn(f - M)(t,x,\xi)$ and integrating in $(x,\xi)$ leads to
\begin{multline*}
\f d {dt} \norm{(f - M)(t)}_{V_1} \\ 
+ \lambda
\int_{\R_{x}}{\int_{\R_{\xi}}{\sum_{k\in I}{\varphi_{\sigma}(t-t_{k})(f - M)(t_{k},x,\xi)\sgn(f - M)(t,x,\xi)}d\xi}dx} 
\leq  \norm{Q(t)}_{V_1}.
\end{multline*}
Using Fubini theorem to invert the integral and sum operators, the previous
equation also writes
\begin{multline*}
\f d {dt} \norm{(f - M)(t)}_{V_1} + \lambda \sum_{k\in
I}{\varphi_{\sigma}(t-t_{k})\norm{(f - M)(t)}_{V_1}}
 \\  \leq \lambda \int_{\R_{x}}{\int_{\R_{\xi}}{\sum_{k\in I}{\varphi_{\sigma}(t-t_{k})\abs{(f - M)(t,x,\xi)-(f - M)(t_{k},x,\xi)}}d\xi}dx} + \norm{Q(t)}_{V_1}.
\end{multline*}
Since $f - M$ is of time bounded variation, we get
\begin{multline*}
\f d {dt} \norm{(f - M)(t)}_{V_1}  + \lambda \sum_{k\in
I}{\varphi_{\sigma}(t-t_{k})\norm{(f - M)(t)}_{V_1}} \\ 
\leq \lambda \int_{\R_{x}}{\int_{\R_{\xi}}{\sum_{k\in
I}{\varphi_{\sigma}(t-t_{k})\megaabs{\int_{t}^{t_k}{\partial_{\tau} ( f - M)
(\tau,x,\xi)d\tau}}d\xi}dx}} +  \norm{Q(t)}_{V_1}.
\end{multline*}
Once again, Fubini theorem yields
\begin{multline*}
\f d {dt} \norm{(f - M)(t)}_{V_1} + \lambda \sum_{k\in
I}{\varphi_{\sigma}(t-t_{k})\norm{(f - M)(t)}_{V_1}} \\
\leq \lambda \sum_{k\in I}{\varphi_{\sigma}(t-t_k) \sgn(t -t_{k}) \int_{t_{k}}^{t}{\int_{\R_x}{\int_{\R_{\xi}}{\abs{\partial_{\tau}(f - M)}d\xi}dx}d\tau}} + \norm{Q(t)}_{V_1}.
\end{multline*}
Recalling that $\partial_t (f - M) \in L_t^{\infty}(M_x^1(\R_x^d), L^{1}(\R_{\xi}))$, we can write
\begin{multline*}
\f d {dt} \norm{(f - M)(t)}_{V_1}+ \lambda \sum_{k\in I}{\varphi_{\sigma}(t-t_{k})\norm{(f - M)(t)}_{V_1}} \\ 
\leq \lambda \sum_{k\in I}{\varphi_{\sigma}(t-t_{k}) \abs{t - t_k} \norm{\partial_t (f - M)}_{L_t^{\infty}(M_x^1(\R_x^d))}} + \norm{Q(t)}_{V_1}.
\end{multline*}
Moreover, because of the compact domain of each $\varphi_{\sigma}$ we can state
\begin{multline*}
\f d {dt} \norm{(f - M)(t)}_{V_1} + \lambda \sum_{k\in
I}{\varphi_{\sigma}(t-t_{k})\norm{(f - M)(t)}_{V_1}} \\ 
\leq \lambda \sigma \norm{\partial_t (f - M)}_{L_t^{\infty}(M_x^1(\R_x))}\sum_{k\in
I}{\varphi_{\sigma}(t-t_{k})} + \norm{Q(t)}_{V_1}.
\end{multline*}
And finally using Gronwall's lemma we obtain
\begin{multline*}
\norm{f(T) -  M(T)}_{L^1(\R_x \times \R_{\xi})} \\
\hspace{-3.5cm} \leq \norm{f_0 - M_0}_{L^1(\R_x \times \R_{\xi})}e^{-\lb \int_0^T{\sum_{k \in I}{\varphi_{\sigma}(t - t_k)}dt}} \\
\hspace{1.1cm} + \sigma \norm{\partial_t (f - M)}_{L_t^{\infty}(M_x^1(\R_x), L^{1}(\R_{\xi}))} \left( 1 - e^{-\lb \int_0^T{\sum_{k \in I}{\varphi_{\sigma}(t - t_k)dt} }}\right)\\
+ \sup_{0 < t \leq T}\norm{Q(t)}_{V_1} \frac{1 - e^{-\lb \int_0^T{\sum_{k \in I}{\varphi_{\sigma}(t - t_k)dt} }}}{\lb}.
\end{multline*}
The conclusion follows.
\end{proof}
\begin{rem}
From Theorem~\ref{thmsamplingtime}, we have the following remarks. The error in the initial condition is forgotten with time, as expected from hyperbolic conservation laws. The effect of the collision term is filtered by the nudging coefficient $\lb$. However, the error in time on the observations introduces a bias of order $\sigma$ in the upper bound which cannot be ignored nor counter-balanced. But it is worth noticing that $\lb$ does not amplify this error which is very important point.
\end{rem}

\subsection{Incomplete observations in space}
In this section, a subdomain is defined, where assimilation towards data occur. We deal with a simplified 1D problem with boundary conditions and assuming that solutions are smooth. The first idea is to find the minimal time required for the information to spread over the whole domain and have an effect on the variables. In control theory, this is the observability problem, dealt with in Section~\ref{observability}. The second idea is to prove the stability of the problem by studying the error decay, which is analyzed in Section~\ref{stability}.

\subsubsection{Observability}
\label{observability}

Let us study the periodic 1D problem over the domain $\left[ 0,1 \right]$

\bealn
  &\partial_t f(t,x,\xi) + a(\xi)\cdot \nabla_x f(t,x,\xi) = \lambda(x)(M(t,x,\xi) - f(t,x,\xi)), \\
  &f(0,x,\xi) = f_0(x,\xi); \quad f(t,0,\xi) = f(t,1,\xi).
\label{parobs}
\eealn

where the observations are supposed to come from the model
\bealn
  &\partial_t M(t,x,\xi) + a(\xi)\cdot \nabla_x M(t,x,\xi) = Q(t,x,\xi),\\
  &M(0,x,\xi) = M_0(x,\xi); \quad M(t,0,\xi) = M(t,1,\xi).
\label{parobsex}
\eealn
We are interested in the case $\lb(x) = \lb \1_{\left[ a,b \right]}(x)$, $\lb \in \R^+, \ 0<a<b<1$. We consider $Q = 0$, $\xi$ is supposed to be fixed. This assumption is not meaningless. It represents indeed the situation where no collision occur at the kinetic level, which translates at the macroscopic level into a situation where no shock occur.

The first concern is the observability of the situation at time $T >0$, where observability is defined as follows in our case
\begin{defn}[Observability condition]
The Cauchy system \eqref{parobs} is observable if and only if there exists $C \in \R_+^*$ such that, for all $f$ solution of \eqref{parobs} with $\lambda = 0$, 
$$C \int_0^T \norm{\lambda(x) u(x,t) }_{L^2(\left[ 0,1 \right])}^2 \geq  \norm{ u(0) }_{L^2(\left[ 0,1 \right]}^2.$$
\end{defn}
This formally means that the information captured in the time lapse $\left[ 0,T \right]$ in the subdomain $\left[ a,b \right]$ allows to reconstruct any initial condition. At fixed $\xi$, $\left( s, x+a(\xi)s\right)$ is the characteristic curve starting in $x$ of the pure transport problem ($\lambda = 0$). The quantity $\int_0^{t}{\1_{\left[ a,b \right]}(x + a(\xi)s)ds}$ represents the time that the particle starting at point $x$ spends in interval $\left[ a,b \right]$ between $0$ and $t$. Let us call
$$ X_{t,\xi} (x) =  \int_0^{t}{\1_{\left[ a,b \right]}(x + a(\xi)s)ds}. $$
We prove the proposition:
\begin{prop}
$X_{t,\xi}^{\text{inf}} = \displaystyle \min_{x} X_{t,\xi} > 0 \Rightarrow \eqref{parobs}$ is observable.
\label{propobservability}
\end{prop}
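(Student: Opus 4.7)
The plan is to combine the method of characteristics, exploiting the fact that with $\lambda=0$, $Q=0$ and fixed $\xi$ the equation reduces to pure transport on the torus, with a change of variables that causes the quantity $X_{T,\xi}$ to appear naturally, and then to close using the hypothesis $X_{T,\xi}^{\text{inf}} > 0$.

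Concretely, I would start from the free solution $u(t,x) = u_0(x - a(\xi) t)$ on $[0,1]$ with the spatial argument understood modulo $1$; translation invariance on the torus already yields $\norm{u(\cdot,t)}_{L^2} = \norm{u_0}_{L^2}$, which I will use at the very end. Expanding $\lambda(x)=\lambda\, \mathbf 1_{[a,b]}(x)$ in the left-hand side of the observability inequality gives
\[
\int_0^T \norm{\lambda(x)\, u(\cdot,t)}_{L^2([0,1])}^2\, dt = \lambda^2 \int_0^T \!\!\int_0^1 \mathbf 1_{[a,b]}(x)\, \abs{u_0(x - a(\xi) t)}^2\, dx\, dt.
\]
After the measure-preserving change of variable $y = x - a(\xi) t$ on the torus and a Fubini swap, this becomes
\[
\lambda^2 \int_0^1 \abs{u_0(y)}^2 \left( \int_0^T \mathbf 1_{[a,b]}(y + a(\xi) s)\, ds\right) dy = \lambda^2 \int_0^1 \abs{u_0(y)}^2\, X_{T,\xi}(y)\, dy,
\]
where the inner integral is exactly $X_{T,\xi}(y)$ as defined just before the proposition.

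To conclude I would invoke the hypothesis $X_{T,\xi}^{\text{inf}} = \min_y X_{T,\xi}(y) > 0$ to bound the above from below by $\lambda^2 X_{T,\xi}^{\text{inf}} \norm{u_0}_{L^2}^2 = \lambda^2 X_{T,\xi}^{\text{inf}} \norm{u(0)}_{L^2}^2$, so that the observability inequality holds with $C = \bigl(\lambda^2 X_{T,\xi}^{\text{inf}}\bigr)^{-1}$.

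The argument is essentially bookkeeping and I do not foresee a real obstacle. The only points that need some care are the clean justification of the change of variables on the torus, best handled by unfolding to $\R$ on the relevant strip and invoking Fubini, and making sure that the $\min$ in the definition of $X_{T,\xi}^{\text{inf}}$ is understood as an essential infimum; the latter is harmless since for $a(\xi)\neq 0$ the function $X_{T,\xi}$ is even continuous in $x$, so the infimum is attained and the case $a(\xi)=0$ would already force $X_{T,\xi}^{\text{inf}}=0$ and fall outside the hypothesis.
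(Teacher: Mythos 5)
Your proof is correct, and it takes a genuinely different — and in fact more complete — route than the paper's. The paper's argument is purely geometric: it translates the hypothesis $X_{T,\xi}^{\text{inf}}>0$ into the explicit time condition $T > \frac{1-(b-a)}{\abs{a(\xi)}}$ by following the periodic characteristics on a figure (the last characteristic to enter $[a,b]$ starts just after $b$), and it leaves the observability inequality itself, and any constant, implicit. You instead verify the definition directly: using the free solution $u(t,x)=u_0(x-a(\xi)t)$ on the torus, the measure-preserving change of variables plus Fubini turns the observed energy $\int_0^T\norm{\lambda(\cdot)u(\cdot,t)}_{L^2}^2\,dt$ into $\lambda^2\int_0^1\abs{u_0(y)}^2 X_{T,\xi}(y)\,dy$, which the hypothesis bounds below by $\lambda^2 X_{T,\xi}^{\text{inf}}\norm{u_0}_{L^2}^2$, giving the explicit constant $C=\bigl(\lambda^2 X_{T,\xi}^{\text{inf}}\bigr)^{-1}$. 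What you lose relative to the paper is the sharp, interpretable threshold on the observation time (and the remark that $a(\xi)=0$ makes particles unobservable, which you recover only in passing); what you gain is an actual quantitative proof of the stated implication, with a clean constant, and the observation that $X_{T,\xi}$ is continuous in $x$ so the minimum is well defined. Your caveats about the torus change of variables and the essential infimum are exactly the right ones and are easily discharged as you indicate.
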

 
\begin{proof}[Proof of Proposition~\ref{propobservability}]
Let us show first that the condition $X_{t,\xi}^{\text{inf}} > 0$ (every charactetistics spends time in the observed domain) is equivalent to $T > \frac{1 - (b-a)}{\abs{a(\xi)}}$. This shows that $a(\xi)$ should necessarily be non-zero, reflecting the fact that particles with speed 0 are not observable. On Figure~\ref{Charac} we represent the characteristics of constant postitive speed $c$ in the periodic case on domain $\left[ 0,1 \right]$. We see that the last characteristics to pass through the domain $\left[ a,b \right]$ is the blue one, starting right after point $b$. This happens if $T > \frac{1 - (b-a)}{c}$.

\begin{figure}[htbp]
\begin{center}
\begin{tikzpicture}[scale=0.65]
\draw[->] (-0.5,0) -- (7,0);
\draw (7,0) node[below] {$x$};
\draw [->] (0,-0.5) -- (0,7);
\draw (0,7) node[left] {$t$};
\draw (0,-0.2) node[left] {$0$};
\draw[black,dashed] (6,0) -- (6,7);
\draw[line width=1,red] (4,0) -- (4,7);
\draw (4,-0.1) node[below] {$a$};
\draw[line width=1,red] (5,0) -- (5,7);
\draw (5,0) node[below] {$b$};
\draw (6,0) node[below] {$1$};
\draw[white,fill=red!30,opacity = 0.5] (4,0) rectangle (5,7);
\draw [domain=0:6] plot(\x,\x);
\draw [blue,domain=5:6] plot(\x,\x-5);
\draw [domain=0:6, densely dotted] plot(\x,1);
\draw [blue,domain=0:6] plot(\x,\x+1);
\draw [domain=0:6, densely dotted] plot(\x,3);
\draw [domain=3:6] plot(\x,\x-3);
\draw [domain=0:4, dashed] plot(\x,5);
\draw (0,5) node[left] {$T = \frac{1 - (b - a)}{c}$};
\draw [domain=0:6, dashed] plot(\x,6);
\draw (0,6) node[left] {$ \frac{1}{c}$};
\draw [domain=0:4] plot(\x,\x+3);
\draw (4,5) node {$\bullet$};
\end{tikzpicture}
\caption[Characteristics of a 1D periodic problem]{Characteristics of the problem. This figure shows the equivalence between both conditions $X_{t,\xi}^{\text{inf}} > 0$ and $T > \frac{1 - (b-a)}{\abs{a(\xi)}}$.}
\label{Charac}
\end{center}
\end{figure}
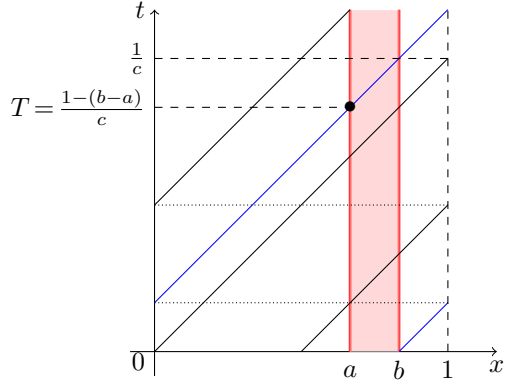
\end{proof}

\subsubsection{Error decay}
\label{stability}

Let us denote $g(t,x,\xi)$ the error between the observations and the estimator:
$$g(t,x,\xi) = f(t,x,\xi) - M(t,x,\xi).$$
\begin{prop}
Let $f(t,x,\xi)$ satisfy \eqref{parobs} and $M(t,x,\xi)$ satisfy \eqref{parobsex} where $Q(t,x,\xi)~=~0$. If the data assimilation problem \eqref{parobs} is observable, exponential stability in time is ensured as soon as $\lambda \geq \frac{t}{X_t^{\text{inf}}}$, where $X_t^{\text{inf}}$ is defined in Section~\ref{observability} i.e.
\begin{equation*}
\norm{g(t)}_{V_2} \leq \norm{g_0}_{V_2}e^{-\lambda X_t^{\text{inf}}}.
\end{equation*}
\label{propstability}
\end{prop}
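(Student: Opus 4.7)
The plan is to reduce the estimate to an explicit computation along characteristics. Subtracting \eqref{parobsex} from \eqref{parobs} and using that $Q=0$, the error $g=f-M$ satisfies the transport equation with a space-dependent damping
\[
\partial_t g(t,x,\xi) + a(\xi)\,\partial_x g(t,x,\xi) + \lambda(x)\, g(t,x,\xi) = 0,\qquad g(0,x,\xi)=g_0(x,\xi),
\]
with periodic boundary conditions in $x$ and $\xi$ frozen. Since $a(\xi)$ does not depend on $(t,x)$, the characteristics through $(t_0,x_0)$ are the straight lines $s\mapsto (t_0+s,\, x_0+a(\xi)s)$ (read modulo $1$ in $x$), and along such a curve the ODE $\frac{d}{ds} g = -\lambda(x_0+a(\xi)s)\, g$ integrates explicitly into
\[
g(t,\, x_0+a(\xi)t,\, \xi)\; =\; g_0(x_0,\xi)\,\exp\!\Big(-\!\int_0^t \lambda\bigl(x_0+a(\xi)s\bigr)\,ds\Big).
\]

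Now I specialize to $\lambda(x)=\lambda\,\mathbbm{1}_{[a,b]}(x)$. Then the integral in the exponent is exactly $\lambda$ times the sojourn time of the characteristic in the observed window, which is the quantity $X_{t,\xi}(x_0)$ introduced in Section~\ref{observability}. Hence
\[
\bigl|g(t,\, x_0+a(\xi)t,\, \xi)\bigr| \;=\; |g_0(x_0,\xi)|\, e^{-\lambda X_{t,\xi}(x_0)} \;\le\; |g_0(x_0,\xi)|\, e^{-\lambda X_t^{\mathrm{inf}}},
\]
using the observability assumption $X_t^{\mathrm{inf}}>0$ to pass to a uniform-in-$x_0$ bound.

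To conclude, I square and integrate in $x$ at fixed $\xi$, performing the change of variables $x=x_0+a(\xi)t\pmod 1$, whose Jacobian is $1$ and which preserves $L^2$-integrals on the periodic torus. This yields
\[
\int_0^1 |g(t,x,\xi)|^2\,dx \;\le\; e^{-2\lambda X_t^{\mathrm{inf}}} \int_0^1 |g_0(x_0,\xi)|^2\,dx_0,
\]
and integrating in $\xi$ and taking the square root gives exactly
\[
\|g(t)\|_{V_2}\;\le\;\|g_0\|_{V_2}\,e^{-\lambda X_t^{\mathrm{inf}}}.
\]
There is no real obstacle in this argument; the only point to be careful about is that the characteristic change of variables is volume-preserving precisely because of the periodicity of the 1D domain, and that the sojourn time $X_{t,\xi}(x_0)$ is independent of whether we parametrise the characteristic by its starting point $x_0$ or its end point $x_0+a(\xi)t$. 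The condition $\lambda\ge t/X_t^{\mathrm{inf}}$ then simply ensures that the exponent $\lambda X_t^{\mathrm{inf}}$ dominates $t$, so that the bound is a genuine exponential decay rather than a purely algebraic estimate.
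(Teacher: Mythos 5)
Your proof is correct and follows essentially the same route as the paper: subtract the two equations, integrate the damped transport equation along characteristics to obtain the factor $\exp\bigl(-\lambda\int_0^t \1_{[a,b]}(x+a(\xi)s)\,ds\bigr)=e^{-\lambda X_{t,\xi}(x)}$, and bound it uniformly by $e^{-\lambda X_t^{\text{inf}}}$ before taking the norm. The only difference is that you spell out the measure-preserving change of variables needed to pass to the $V_2$ norm, which the paper leaves implicit.
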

\begin{proof}[Proof of Proposition~\ref{propstability}]
Subtracting \eqref{parobs} and \eqref{parobsex} leads to
\bealn
&  \partial_t g(t,x,\xi) + a(\xi)\cdot \nabla_x g(t,x,\xi) = -\lambda(x)g(t,x,\xi), \\
 & g(0,x,\xi) = f_0(x,\xi) - M_0(x,\xi) = g_0(x,\xi); \quad g(t,0,\xi) = g(t,1,\xi).
\label{eqerror}
\eealn
\eqref{eqerror} is equivalent to
\bealn
&\frac{d}{dt}g(t, x + a(\xi)t,\xi) = -\lambda(x + a(\xi)t)g(t, x + a(\xi)t,\xi),\\
 &g(0,x,\xi) = g_0(x,\xi); \quad  g(t,0,\xi) = g(t,1,\xi).
\label{eqerror2}
\eealn
\eqref{eqerror2} integrates into
\begin{align*}
 g(t,x+a(\xi)t,\xi) &= g_0(x,\xi)\text{exp}\Bigl({-\int_0^{t}{\lambda(x + a(\xi)s)}ds}\Bigr), \nonumber \\
 &= g_0(x,\xi)\text{exp}\Bigl({-\lambda\int_0^{t}{\1_{\left[ a,b \right]}(x + a(\xi)s)ds}}\Bigr).
\label{eqerror3}
\end{align*}
Then, if $X_{t,\xi}$ has a positive lower bound $X_{t,\xi}^{\text{inf}}$ (if the system is observable), 
\begin{equation*}
 \norm{g(t)} \leq \norm{g_0}e^{-\lambda X_t^{\text{inf}}}. 
\end{equation*}
Therefore, an exponential decrease of the error is possible if $\lambda \geq \frac{t}{X_t^{\text{inf}}}$.
\end{proof}
Then it is possible to add the collision term in a second step. The analytical resolution of the error writes
\begin{multline*}
 g(t,x+a(\xi)t,\xi) = g_0(x,\xi) \text{exp}\Bigl({-\int_0^{t}{\lambda(x + a(\xi)s)}ds} \Bigr) \\+ \int_0^t{Q(s, x+a(\xi)s,\xi)\text{exp}\Bigl({-\int_s^t{\lambda(x+a(\xi)r)dr}}ds}\Bigr). 
\end{multline*}
However, it is not so clear how to treat the integral with the collision term, since we cannot extract the coefficient $\lb$ as easily as in the situation when it was constant. It raises the question to know if the coefficient can filter collisions everywhere, like in the case of complete observations at the beginning of this paper, even if it is spatially constrained. This problem remains an open question and we leave it with no specific answer for now.

\subsection{Numerical results}
\label{secnumresults}
Numerical experiments of the kinetic nudging are performed first on a scalar conservation law, Burgers' equation. Further numerical results about the shallow water equations are presented in Section~\ref{subsec:svtnumerics}. 

\subsubsection{Discretization}

We consider that observations satisfy the Burgers equation with homogeneous Dirichlet boundary conditions on the domain $[0,1]$
\begin{equation*}
   \left\{
       \begin{aligned}
&\partial_t  u + \partial_x \frac{ u^2}{2} = 0, \\
& u(0,x) =  u_0(x); \quad u(t,0) = 0, \ u(t,1) = 0.
\end{aligned}
\right. 
\end{equation*}
This problem with all its entropies is well represented by the kinetic model
\begin{equation*}
   \left\{
       \begin{aligned}
&\partial_{t} \Chi(\xi,  u(t,x)) + \xi \partial_{x} \Chi(\xi,  u(t,x)) = \partial_{\xi}m(t,x,\xi),\\
&\Chi(\xi,  u(0,x)) =  \Chi(\xi,  u_{0}(x)); \quad \Chi(\xi,  u(t,0)) = \Chi(\xi,  u(t,1)) = 0.
\end{aligned}
\right.
\end{equation*}
Therefore, data assimilation at the kinetic level writes
\begin{equation}
   \left\{
       \begin{aligned}
&\partial_{t} f(t,x,\xi) + \xi \partial_{x} f(t,x,\xi) =\lb (\Chi(\xi,  u(t,x)) - f(t,x,\xi)),\\
&f(0,x,\xi) =  f_{0}(x); \quad f(t,0,\xi) = f(t,1,\xi) = 0.
\end{aligned}
\right.
\label{burgerskin}
\end{equation}
To solve this numerically, we develop a finite volume framework. The domain $[0 ,1]$ is divided into N cells centered on $x_{i},\  i \in 0..N$. Then, we integrate \eqref{burgerskin} over $[x_{i-1/2}, x_{i-1/2}] \times [t^n, t^{n+1}]$. An explicit Euler scheme leads to
\begin{align*}
f_i^{n+1} =  f_i^{n} - \frac{\Delta t}{\Delta x} \xi \left( f_{i+1/2}^n - f_{i-1/2}^n\right) + \lambda \Delta t \left(\Chi(\xi, u_i^n) - f_i^n \right).
\end{align*}
The numerical fluxes are computed using a simple and natural upwind scheme
\begin{equation*}
	f_{i+1/2}^n =
	\begin{cases}
	    f_i^{n}, & \text{ for } \xi \geq 0,  \\
	    f_{i+1}^{n}, & \text{ for }  \xi < 0. 
	\end{cases}
\end{equation*}
This explicit scheme is only valid under the CFL condition
$$\Delta t \leq \left(\lambda + \frac{\sup{\xi}}{\Delta x}\right)^{-1}.$$
An integration in $\xi$ gives the macroscopic scheme for the estimator $\widehat u(t,x)$, which is basically the Osher-Engquist scheme
\begin{multline*}
\widehat u_i^{n+1} = \widehat u_i^{n} - \frac{\Delta t}{\Delta x} \left( \max(\widehat u_i^{n},0) \frac{\widehat u_i^{n}}{2}
+ \min(\widehat u_{i+1}^{n},0)\frac{\widehat u_{i+1}^{n}}{2}\right.\\
 \left.-\max(\widehat u_{i-1}^{n},0)\frac{\widehat u_{i-1}^{n}}{2}
-\min(\widehat u_{i}^{n},0)\frac{\widehat u_{i}^{n}}{2}
\right)
+\lb \Delta t (u_i^{n} - \widehat u_i^{n}).
\end{multline*}

\subsubsection{Results}
\label{resultsburgers}

We want to illustrate our mathematical result about noise stability (Theorem~\ref{thmnoise1lin}) on Burgers' equation. We add a deterministic noise to the data. As explained in the Remark~\ref{remnoise}, in practice the noise is on the macroscopic data. Let us choose $$E(x) = \varepsilon \cos\Bigl(\frac{x}{\varepsilon}\Bigr), \quad \partial_{\alpha}E(x) = \varepsilon^{1 - \alpha}\cos\Bigl(\frac{x}{\varepsilon} + \alpha \frac{\pi}{2}\Bigr).$$
Therefore, $$\norm{E}_{L^2(\left[ 0,1 \right])} = \frac{\varepsilon}{2}\sqrt{2 + \varepsilon \sin\Bigl(\frac{2}{\varepsilon}\Bigr)},\quad \norm{\partial_{\alpha} E}_{L^2(\left[ 0,1 \right])} = \frac{\varepsilon^{1 - \alpha}}{2}\sqrt{2 + \varepsilon \sin\Bigl(\frac{2}{\varepsilon}\Bigr)}.$$
The noise may then be non negligible in $L^2$ norm (as soon as $\alpha > 0$) whereas it is small in $H^{-\alpha}$. Initial conditions are taken as follows:
\begin{equation*} u(0,x) = \begin{cases} 1& \text{ if } \frac{1}{8} \leq x \leq \frac{1}{4}\\
                 0& \text{ otherwise, }
                 \end{cases}, \qquad
   \widehat u(0,x) = \begin{cases} 0.75& \text{ if } \frac{1}{12} \leq x \leq \frac{1}{6},\\
                 0& \text{ otherwise. }
                 \end{cases}
\end{equation*}              
The parameters take the following values: $N=100$, $\lambda=100$, $\alpha = \frac{1}{4}$. The observations are partial in time: 30 observations are carried out between $t\,=\,0$ and $t\,=\,2$. The number of time steps is 320 which makes an observation ratio of $10\%$. The first row of Figure~\ref{6shots} shows the evolution of the observer if no assimilation occurred ($\lb$ = 0). We can see that the error introduced by a wrong initial condition is detrimental. Figure~\ref{6shots} shows the evolution of the data assimilation problem for two different values of $\varepsilon$. In the case $\varepsilon = 0.02$, $\norm{\partial_{\alpha} E}_{L^2(\left[ 0,1 \right])} = 4.0 \,10^{-2}$, whereas $\norm{\partial_{\alpha} E}_{L^2(\left[ 0,1 \right])} = 7.0 \,10^{-3}$ for $\varepsilon = 0.002$. In comparison, the norm of the initial solution is $\norm{u(0,\cdot)}_{L^2(\left[ 0,1 \right])} = 4.0\, 10^{-1}$. It converges rapidly to the global shape of the original problem. As stated previously, the noise introduced on the 
data 
cannot be forgotten, but it is not amplified.
\begin{figure}[htbp]
\begin{center}
\begin{tikzpicture}[scale=0.50]
\begin{axis}[xlabel=x, ylabel=u(x),font=\Large]
\addplot[color=red,mark=.,line width=2pt] table[x = x,y=u]
	 {FigDA/testobs_0001.txt};
	 \addlegendentry{Exact $u$}
	 \addplot[color=blue,mark=.,line width=2pt] table[x = x,y=u]
	 {FigDA/testcontrol_0001.txt};
	 \addlegendentry{ $\widehat u$, $\lambda=0$}
\end{axis}
\end{tikzpicture}	 
\begin{tikzpicture}[scale=0.50]
\begin{axis}[xlabel=x, ylabel=u(x),font=\Large]
\addplot[color=red,mark=.,line width=2pt] table[x = x,y=u]
	 {FigDA/testobs_0020.txt};
	 \addlegendentry{ Exact $u$}
	 \addplot[color=blue,mark=.,line width=2pt] table[x = x,y=u]
	 {FigDA/testcontrol_0020.txt};
	 \addlegendentry{ $\widehat u$, $\lambda=0$}
\end{axis}
\end{tikzpicture}
\begin{tikzpicture}[scale=0.50]
\begin{axis}[xlabel=x, ylabel=u(x),ymax=1.1, font=\Large]
\addplot[color=red,mark=.,line width=2pt] table[x = x,y=u]
	 {FigDA/testobs_0050.txt};
	 \addlegendentry{Exact $u$}
	 \addplot[color=blue,mark=.,line width=2pt] table[x = x,y=u]
	 {FigDA/testcontrol_0050.txt};
	 \addlegendentry{ $\widehat u$, $\lambda=0$}
\end{axis}
\end{tikzpicture}

\vspace{0.5cm} 
\begin{tikzpicture}[scale=0.50]
\begin{axis}[xlabel=x, ylabel=u(x),font=\Large]
\addplot[color=red,mark=.,line width=2pt] table[x = x,y=u]
	 {FigDA/burgersdet002_0001.txt};
	 \addlegendentry{Exact $u$}
	 \addplot[color=blue,mark=.,line width=2pt] table[x = x,y=u]
	 {FigDA/burgersdet2002_0001.txt};
	 \addlegendentry{ $\widehat u$, $\lambda=100$}
\end{axis}
\end{tikzpicture}	 
\begin{tikzpicture}[scale=0.50]
\begin{axis}[xlabel=x, ylabel=u(x),font=\Large]
\addplot[color=red,mark=.,line width=2pt] table[x = x,y=u]
	 {FigDA/burgersdet002_0020.txt};
	 \addlegendentry{ Exact $u$}
	 \addplot[color=blue,mark=.,line width=2pt] table[x = x,y=u]
	 {FigDA/burgersdet2002_0020.txt};
	 \addlegendentry{ $\widehat u$, $\lambda=100$}
\end{axis}
\end{tikzpicture}
\begin{tikzpicture}[scale=0.50]
\begin{axis}[xlabel=x, ylabel=u(x),ymax=1.1,font=\Large]
\addplot[color=red,mark=.,line width=2pt] table[x = x,y=u]
	 {FigDA/burgersdet002_0050.txt};
	 \addlegendentry{Exact $u$}
	 \addplot[color=blue,mark=.,line width=2pt] table[x = x,y=u]
	 {FigDA/burgersdet2002_0050.txt};
	 \addlegendentry{ $\widehat u$, $\lambda=100$}
\end{axis}
\end{tikzpicture}

\vspace{0.5cm} 

\begin{tikzpicture}[scale=0.50]
\begin{axis}[xlabel=x, ylabel=u(x),font=\Large]
\addplot[color=red,mark=.,line width=2pt] table[x = x,y=u]
	 {FigDA/burgersdet0002_0001.txt};
	 \addlegendentry{Exact $u$}
	 \addplot[color=blue,mark=.,line width=2pt] table[x = x,y=u]
	 {FigDA/burgersdet20002_0001.txt};
	 \addlegendentry{ $\widehat u$, $\lambda=100$}
\end{axis}
\end{tikzpicture}	 
\begin{tikzpicture}[scale=0.50]
\begin{axis}[xlabel=x, ylabel=u(x),font=\Large]
\addplot[color=red,mark=.,line width=2pt] table[x = x,y=u]
	 {FigDA/burgersdet0002_0020.txt};
	 \addlegendentry{Exact $u$}
	 \addplot[color=blue,mark=.,line width=2pt] table[x = x,y=u]
	 {FigDA/burgersdet20002_0020.txt};
	 \addlegendentry{ $\widehat u$, $\lambda=100$}
\end{axis}
\end{tikzpicture}
\begin{tikzpicture}[scale=0.50]
\begin{axis}[xlabel=x, ylabel=u(x),ymax=1.1,font=\Large]
\addplot[color=red,mark=.,line width=2pt] table[x = x,y=u]
	 {FigDA/burgersdet0002_0050.txt};
	 \addlegendentry{Exact $u$}
	 \addplot[color=blue,mark=.,line width=2pt] table[x = x,y=u]
	 {FigDA/burgersdet20002_0050.txt};
	 \addlegendentry{ $\widehat u$, $\lambda=100$}
\end{axis}
\end{tikzpicture}
	 \caption[Evolution of the data assimilation problem on Burgers' equation]{Top: Evolution of the data assimilation problem at time t~=~0.01, t~=~0.2 and t~=~0.5 in the case $\varepsilon~=~0.02$. Bottom: Evolution of the data assimilation problem at time t ~=~0.01, t~=~0.2 and t~=~0.5 in the case $\varepsilon~=~0.002$.}
	 \label{6shots}
	 \end{center}
\end{figure}
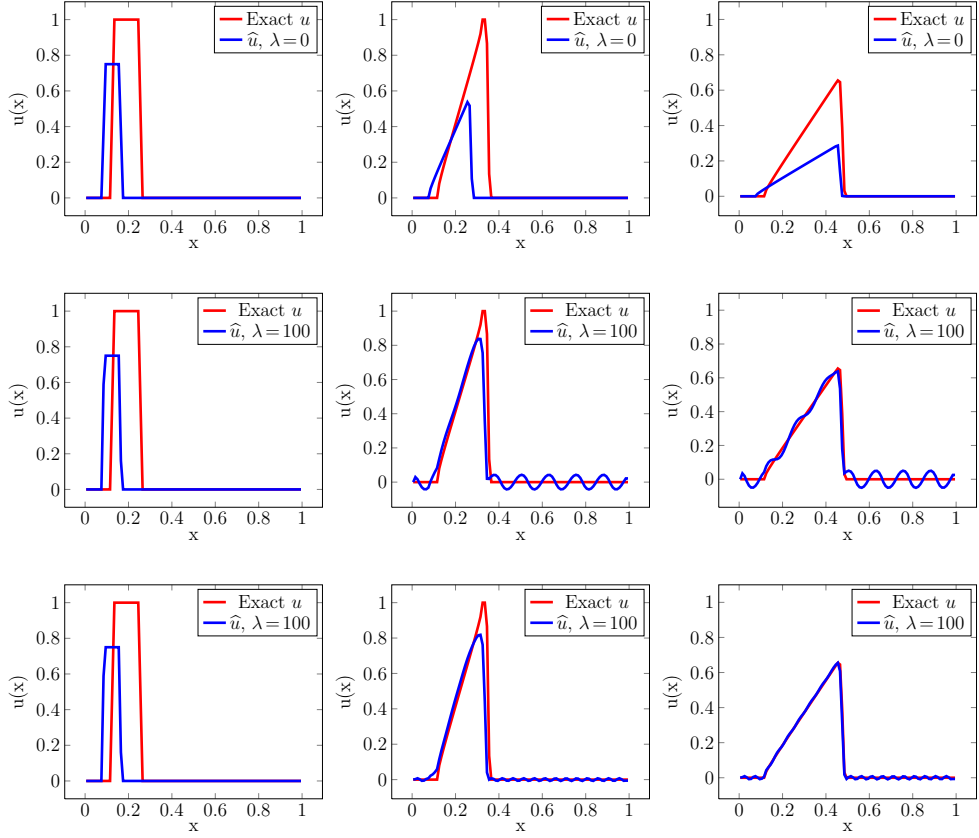

The proofs of the theorems about noise stability are centered around the search for an optimal $\lambda$. Figure~\ref{lambdaopt} shows the existence of this optimal $\lambda$ minimizing the homogeneous Sobolev seminorm. Moreover, it shows that the error tends to zero as $\varepsilon$ tends to zero.
\begin{figure}[htbp]
\begin{center}
\begin{tikzpicture}[scale=0.9]
\begin{axis}[
xlabel=$\lambda$,
ylabel=$H^{1/8}$ norm of the error,
legend style={font=\fontsize{4}{5}\selectfont}]
]
	 \addplot[color=blue,mark=x] table[x = gain,y=error]
	 {FigDA/normsobeps5m2bis.txt};
	 \addlegendentry{$\varepsilon = 0.05$}
	 	 \addplot[color=cyan,mark=x] table[x = gain,y=error]
	 {FigDA/normsobeps4m2bis.txt};
	 \addlegendentry{$\varepsilon = 0.04$}
	 	 \addplot[color=mygreen,mark=x] table[x = gain,y=error]
	 {FigDA/normsobeps3m2bis.txt};
	 \addlegendentry{$\varepsilon = 0.03$}
	 	 \addplot[color=myorange,mark=x] table[x = gain,y=error]
	 {FigDA/normsobeps2m2bis.txt};
	 \addlegendentry{$\varepsilon = 0.02$}
	 \addplot[color=magenta,mark=x] table[x = gain,y=error]
	 {FigDA/normsobeps1m2bis.txt};
	 \addlegendentry{$\varepsilon = 0.015$}
	 \addplot[color=red,mark=x] table[x = gain,y=error]
	 {FigDA/normsobeps5m3bis.txt};
	 \addlegendentry{$\varepsilon = 0.005$}
	\addplot[color=black,mark=x] table[x = gain,y=error]
	 {FigDA/normsobeps1m4bis.txt};
	 \addlegendentry{$\varepsilon = 0.0001$}	
\end{axis}
\end{tikzpicture}
\caption[Optimal $\lb$ detection on Burgers' equation]{$H^{1/8}$ error between the observations and the model as a function of $\lambda$. We see on the one hand the existence of an optimal $\lambda$ for any value of $\varepsilon$. Notice also the decrease of the error with~$\varepsilon$. }
\label{lambdaopt}
\end{center}
\end{figure}
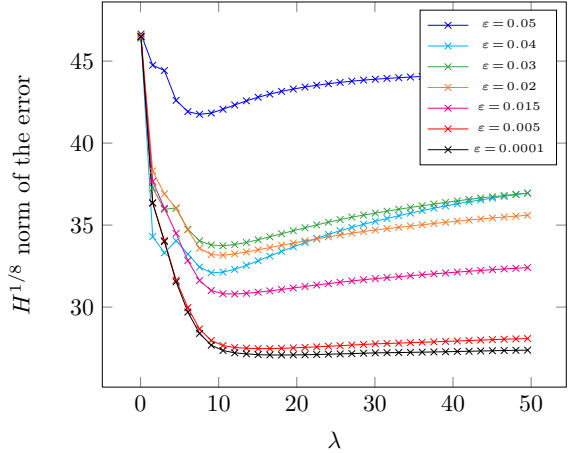

In the end, the observer and the numerical scheme for the data assimilation is similar to what could have been done on the macroscopic conservation law directly. However, this framework paves the way for an interesting theoretical framework, making the most of what has already been around kinetic equations. But most of all, it is its extension to the case of hyperbolic systems in Section~\ref{sec:saintvenant} that reveals its efficiency.


\section{Kinetic Luenberger observer on the Saint-Venant system}
\label{sec:saintvenant}
Systems in several space dimensions usually hardly admit a kinetic formulation. They satisfy indeed a single entropy inequality instead of a whole family. And this is not enough to deduce the sign of the kinetic entropy defect measure in \eqref{kinprelim}. Therefore, we introduce a weaker concept: the kinetic representation.

\subsection{Kinetic representation of the Saint-Venant system}
\label{subsec:kindesc}
The kinetic representation only uses one entropy and it is therefore much less demanding than the kinetic formulation. It enables to represent a hyperbolic system by integration of the underlying kinetic equation. Its main interest lies in the design of finite volume solvers. We will show that it is also very useful to derive observers. Since the kinetic representation does not admit a general writing, we choose to study the Saint-Venant system.

As explained in the introduction of this article, the shallow water assumptions allows to approximate the free surface Navier-Stokes equations under the form of a hyperbolic conservation law, the Saint-Venant system with topography source term, that we recall hereafter
\begin{subequations}
\begin{numcases}{}
 \frac{\partial H}{\partial t} + \frac{\partial}{\partial x}
\bigl(Hu\bigr) = 0, \label{saintvenantH}\\
 \frac{\partial (Hu)}{\partial t} + \frac{\partial
  (Hu^2)}{\partial x} + \frac{g}{2}\frac{\partial
  H^2}{\partial x} = -gH \frac{\partial z_b}{\partial x}, 
\label{saintvenantHu}
\end{numcases}
\end{subequations}
where $H$ and $u$ respectively denote the water depth and
the averaged velocity and $z_b$ represents the bathymetry with
$H=\eta-z_b$. We point out the difficulty of treating a nonlinear hyperbolic system, in comparison with scalar conservation laws. A review of control theory on the Saint-Venant system is available in \cite{coron2009control}, which uses non trivial tools like the return method, quasi static deformations, and leaves the reader with still open problems.

We call entropy solution to the Saint-Venant system,
 a weak solution which satisfies an entropy inequality.
 \begin{thm}
 The system (\ref{saintvenantH})-(\ref{saintvenantHu}) is strictly hyperbolic for
 $H > 0$. It admits a mathematical entropy $\zeta$ (which is also the energy) and an entropy flux $G$
 \begin{equation}
 \zeta = \frac{Hu^2}{2}+\frac{gH(\eta+z_b)}{2}, \quad G = u\bigl(\zeta+g\frac{H^2}{2}\bigr),
 \label{saintvenantE}
 \end{equation}
 satisfying
 \begin{eqnarray}
 \frac{\partial \zeta}{\partial t} + \frac{\partial G}{\partial x}
 \leq 0.\label{eqsaintvenantE}
 \end{eqnarray}
 \end{thm}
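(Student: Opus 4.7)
The plan is to handle the two parts of the statement separately. Strict hyperbolicity reduces to an eigenvalue computation for the flux Jacobian, while the entropy pair is verified by a targeted linear combination of the mass and momentum equations for smooth solutions, and the inequality in the weak case is then obtained by a vanishing-viscosity argument.

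For hyperbolicity, I would rewrite the system in the conservative variables $U=(H,Hu)^\top$ with flux $F(U)=(Hu,\ Hu^2+gH^2/2)^\top$ and source $(0,-gH\partial_x z_b)^\top$. The Jacobian
\[
F'(U)=\begin{pmatrix} 0 & 1 \\ gH-u^2 & 2u \end{pmatrix}
\]
has characteristic polynomial $\mu^2-2u\mu+(u^2-gH)=0$, with roots $u\pm\sqrt{gH}$. These are real and distinct precisely when $H>0$, which gives strict hyperbolicity.

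For the entropy equality in the smooth case, I would first rewrite \eqref{saintvenantHu} using $\eta=H+z_b$ as $\partial_t(Hu)+\partial_x(Hu^2)+gH\partial_x\eta=0$. Multiplying by $u$ and using the chain-rule identity $u\partial_t(Hu)-(u^2/2)\partial_t H=\partial_t(Hu^2/2)$ together with its $x$-counterpart, then eliminating the term $(u^2/2)[\partial_t H+\partial_x(Hu)]$ via \eqref{saintvenantH}, yields
\[
\partial_t\bigl(\tfrac{Hu^2}{2}\bigr)+\partial_x\bigl(\tfrac{Hu^3}{2}\bigr)+gHu\,\partial_x\eta=0.
\]
For the potential contribution, using $\partial_t z_b=0$ and the mass equation gives $\partial_t(gH^2/2+gHz_b)=g\eta\,\partial_t H=-g\eta\,\partial_x(Hu)$, which combined with $\partial_x(gHu\eta)=g\eta\,\partial_x(Hu)+gHu\,\partial_x\eta$ exactly cancels the remaining term above and produces $\partial_t\zeta+\partial_x G=0$, so that \eqref{saintvenantE} is an entropy--entropy flux pair.

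To obtain the inequality \eqref{eqsaintvenantE} for weak entropy solutions, I would use the standard vanishing-viscosity argument: regularize by $\varepsilon\partial_{xx}U$, repeat the algebra, which now produces a non-positive dissipation term of the form $-\varepsilon\,(D^2\zeta)(U):(\partial_x U\otimes\partial_x U)$ provided $\zeta$ is convex in $(H,Hu)$, and pass to the limit $\varepsilon\to 0$ in the sense of distributions. The key point to verify is convexity of $\zeta$ in the conservative variables; since $z_b$ depends only on $x$, the source contribution $gHz_b$ is linear in $U$ and does not affect convexity, and the remaining part $(Hu)^2/(2H)+gH^2/2$ is classically convex on $\{H>0\}$. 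The only non-routine feature compared to the flat-bottom case is the careful bookkeeping of the $z_b$-terms throughout the derivation, and this is where I expect the main (though essentially technical) obstacle to lie.
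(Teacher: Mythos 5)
Your computations are correct, but note that the paper itself does not prove this theorem at all: it explicitly defers to the classical theory of hyperbolic systems (citing Dafermos) and only remarks that for smooth solutions the inequality is an equality. So your proposal is a genuine filling-in rather than a reproduction of the paper's argument. The eigenvalue computation $\mu = u \pm \sqrt{gH}$ is exactly the standard route and is right; the smooth-case algebra is also correct, since with $\eta = H + z_b$ one has $\zeta = \frac{Hu^2}{2} + \frac{gH^2}{2} + gHz_b$ and $G = \frac{Hu^3}{2} + gHu\eta$, and your two identities (kinetic part giving $-gHu\,\partial_x\eta$, potential part giving $+gHu\,\partial_x\eta$) cancel as claimed. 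Two remarks on the last step. First, the inequality \eqref{eqsaintvenantE} cannot be a statement about arbitrary weak solutions (non-entropic shocks violate it); in the paper it functions as the selection criterion defining entropy solutions, so your vanishing-viscosity argument should be read as showing that viscosity limits satisfy the inequality, which is the standard and intended interpretation. Second, in the viscous computation the explicit $x$-dependence of $\zeta$ through $z_b$ produces, besides the dissipation term $-\varepsilon\,(\partial_x U)^{\top} D^2\zeta_0\,\partial_x U \leq 0$ (with $\zeta_0(H,q)=\frac{q^2}{2H}+\frac{gH^2}{2}$, whose Hessian you correctly identify as positive definite for $H>0$ since $gHz_b$ is linear in $U$), an extra term of the form $-\varepsilon\, g\, z_b'(x)\,\partial_x H$, which must be shown to vanish in the distributional limit (e.g.\ by writing it as $\partial_x(\varepsilon g z_b' H) - \varepsilon g z_b'' H$ with $H$ bounded and $z_b$ smooth); this is precisely the bookkeeping you anticipate and it is routine, so the proposal is sound at the level of rigor the paper itself adopts.
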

 We do not prove this theorem which relies on the classical theory of hyperbolic
 equations and simple algebraic calculation, see \cite{dafermos2005}. We just
 recall that for smooth solutions the inequality in (\ref{eqsaintvenantE}) is
 an equality.

The system (\ref{saintvenantH})-(\ref{eqsaintvenantE}) admits a kinetic representation. We use another interpretation as the one given in the introduction. As explained in \cite{perthame1990boltzmann}, many Gibbs equilibria can indeed replace the Maxwellian usually used in kinetic theory, as soon as it satisfies basic properties. Some are particularly suited for the design of efficient numerical schemes as they enable analytical integrations (see \cite{Audusse2010a}, \cite{Audusse2010b}).

We first introduce a real function $\Chi$ defined on
$\mathbb{R}$, compactly supported and which has the properties
\begin{equation}
\left\{\begin{array}{l}
\Chi(-w) = \Chi(w) \geq 0, \\
\displaystyle \int_{\mathbb{R}} \Chi(w)\ dw = \int_{\mathbb{R}} w^2\Chi(w)\ dw = 1,
\end{array}\right.
\label{eq:chi1da}
\end{equation}
define $k_3 = \int_{\mathbb{R}} \Chi^3(w)\ dw$. We denote
$[-w_\Chi,w_\Chi]$ the compact support of length $2w_\Chi$ of the
function $\Chi$. The simplest choice for $\Chi$ is
\begin{equation*}
\Chi_1(z)=\frac{1}{2\sqrt{3}} 1_{|z| \leq \sqrt{3}}.
\label{eq:chi11}
\end{equation*}
Another function satisfying \eqref{eq:chi1da} deserves being mentioned too
\begin{equation}
\Chi_0(z)=\frac{1}{\pi} \sqrt{1 - \frac{z^{2}}{4}}\1_{\abs{z}\leq2}.
\label{eq:chi10}
\end{equation}
It is indeed the minimizer of energy. We recall the lemma from \cite{perthame2001kinetic} in a slightly different form
\begin{lem}
The minimization of the energy
\[
\zeta(f, z_{b}) = \int_{\R}{\left(\frac{\xi^{2}}{2}f(\xi) + \frac{g^{2}}{8}f(\xi)^{3} + g z_{b}f(\xi) \right)d\xi}, 
\]
under the constraints
\[
f \geq 0, \quad, \int_{\R}{f(\xi)d\xi} = H, \quad \mbox{ and } \int_{\R}{\xi f(\xi)d\xi} = Hu, 
\]
is reached for the function $M(t,x,\xi) = \frac{H}{c}\Chi_{0}(\frac{\xi - u}{c})$, with $\Chi_{0}$ defined by \eqref{eq:chi10}.
\label{lemminenergy}
\end{lem}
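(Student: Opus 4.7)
My approach is standard convex analysis: the functional is strictly convex on the admissible set, so there is a unique minimizer, which I identify explicitly via Lagrange / KKT conditions. On the cone $\{f\geq 0\}$ the integral $\int \tfrac{g^{2}}{8}f^{3}\,d\xi$ is strictly convex while the two other terms in $\zeta(\cdot,z_{b})$ are linear in $f$; together with the fact that the admissible set of nonnegative functions with prescribed zeroth and first moments is convex, this yields uniqueness. Existence follows by the direct method in $L^{1}\cap L^{3}(\R)$: Cauchy--Schwarz gives $\int \xi^{2} f\,d\xi \geq Hu^{2}$, so the energy is bounded below, and a minimizing sequence is bounded in both $L^{1}$ and $L^{3}$; weak lower semicontinuity of the convex functional on a weakly convergent subsequence provides the minimum.

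To identify the minimizer I introduce Lagrange multipliers $\alpha,\beta \in \R$ for the two moment constraints. The first-order KKT conditions for the pointwise inequality $f\geq 0$ give
\[
\tfrac{3g^{2}}{8}\,f(\xi)^{2} \;=\; \Bigl[\alpha + \beta\xi - \tfrac{\xi^{2}}{2} - g z_{b}\Bigr]_{+},
\]
with $f(\xi) = 0$ where the bracket is negative. Completing the square in $\xi$ rewrites the right-hand side as $\tfrac{1}{2}\bigl[R^{2} - (\xi-u_{0})^{2}\bigr]_{+}$ with $u_{0} = \beta$ and $R^{2} = \beta^{2} + 2\alpha - 2g z_{b}$, leading to the semicircular profile
\[
f(\xi) \;=\; \frac{2}{\sqrt{3}\,g}\,\sqrt{[R^{2} - (\xi - u_{0})^{2}]_{+}}.
\]

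The two free constants $(u_{0}, R)$ are pinned down by the moment conditions. Since the profile is even about $u_{0}$, combining $\int \xi f = Hu$ with $\int f = H$ forces $u_{0} = u$; the mass constraint, via the semicircular area $\int_{-R}^{R}\sqrt{R^{2} - y^{2}}\,dy = \pi R^{2}/2$, then reduces to $\pi R^{2}/(\sqrt{3}\,g) = H$, which determines $R$ uniquely in terms of $H$ and $g$. Finally, setting $c = R/2$ (so that the rescaled variable $z = (\xi-u)/c$ ranges in $[-2,2]$) and using the explicit form of $\Chi_{0}$ in \eqref{eq:chi10}, a short calculation gives $f(\xi) = \tfrac{H}{c}\,\Chi_{0}\!\bigl(\tfrac{\xi-u}{c}\bigr)$, which is the announced formula.

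The main analytic obstacle I anticipate is the rigorous justification of the KKT representation in the presence of the pointwise inequality constraint $f\geq 0$: one needs a constraint qualification so that the Lagrange multipliers for the two linear moment constraints are genuine real numbers rather than singular parts, and one needs Gateaux differentiability of the cubic term in the appropriate function space. Both are easy to verify here because whenever $H>0$ admissible interior perturbations exist and $f\mapsto \int f^{3}$ is smooth on $L^{3}\cap L^{\infty}$. Once these preliminaries are settled, the remainder is elementary algebra and a single trigonometric integral.
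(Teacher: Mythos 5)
Your strategy (direct method for existence, strict convexity of $f\mapsto\int f^{3}$ on the cone $\{f\geq 0\}$ for uniqueness, Euler--Lagrange/KKT identification of a truncated-parabola-squared, i.e.\ semicircular, profile, then fixing the two free constants by symmetry and the mass constraint) is sound, and it is essentially the standard argument behind this lemma; note that the paper itself gives no proof and simply recalls the statement from the cited reference, where the key point is exactly your observation that the derivative of the energy at the claimed minimizer is affine in $\xi$ on its support. The existence step would need a word about tightness in $\xi$ (provided by the $\tfrac{\xi^{2}}{2}f$ term) so that the moment constraints pass to the weak limit, but that is minor.

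There is, however, a quantitative mismatch that your last step hides. With the cubic coefficient $\tfrac{g^{2}}{8}$ exactly as written, your own stationarity relation gives $f(\xi)=\tfrac{2}{\sqrt{3}\,g}\sqrt{[R^{2}-(\xi-u)^{2}]_{+}}$ with $\pi R^{2}=\sqrt{3}\,gH$, so your $c:=R/2$ satisfies $c^{2}=\tfrac{\sqrt{3}\,gH}{4\pi}$. But the function $M=\tfrac{H}{c}\Chi_{0}(\tfrac{\xi-u}{c})$ the lemma refers to uses the paper's $c=\sqrt{gH/2}$ (defined immediately after the lemma and used in \eqref{eq:gibbs_d} and \eqref{gibbsobserver}); on its support it equals $\tfrac{1}{\pi g}\sqrt{2gH-(\xi-u)^{2}}$, a different semicircle from yours since $\tfrac{2}{\sqrt{3}}\neq\tfrac{1}{\pi}$. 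So the sentence ``setting $c=R/2$ \ldots gives $f=\tfrac{H}{c}\Chi_{0}$'' is true only because you silently redefine $c$; it does not establish the statement with the paper's $c$, which is what is needed later (the lemma is invoked in the proof of Theorem~\ref{thmentropy} for the equilibrium \eqref{gibbsobserver} with $\widehat c=\sqrt{g\widehat H/2}$). The resolution is that the stated energy is missing the normalization $k_{3}=\int_{\R}\Chi_{0}^{3}(w)\,dw=\tfrac{3}{4\pi^{2}}$: with the cubic term $\tfrac{g^{2}}{8k_{3}}f^{3}=\tfrac{\pi^{2}g^{2}}{6}f^{3}$, which is precisely the energy $e(f)$ used in Theorem~\ref{thm:kinetic_sv} and in Section~\ref{subsec:svtnumerics}, your computation gives amplitude $\tfrac{1}{\pi g}$, radius $R=\sqrt{2gH}=2c$ with $c=\sqrt{gH/2}$, hence exactly $\tfrac{H}{c}\Chi_{0}(\tfrac{\xi-u}{c})$ and minimum value $\tfrac{Hu^{2}}{2}+\tfrac{gH^{2}}{2}+gz_{b}H$. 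Either carry this factor $1/k_{3}$ through, or state explicitly that with the literal coefficient the optimal half-width differs from $\sqrt{gH/2}$; as written, the final identification step does not match the paper's conventions.
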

It also permits analytical integrations that we will use in the further numerical experiments.
Now let us construct the density of particles $M(t,x,\xi)$
defined by a Gibbs equilibrium. The microscopic density of particles present at
time $t$, at the abscissa $x$ and with velocity $\xi$ given by
\begin{equation*}
M(t,x,\xi) = \frac{H}{c} \Chi\left(\frac{\xi - u}{c}\right), 
\label{eq:M}
\end{equation*}
with $c = \sqrt{\frac{gH}{2}}$.

Then we have the following theorem, whose proof can be found in \cite{Audusse2010b}
\begin{thm}
The functions $(H,u)$ are strong solutions of the Saint-Venant system
described by \eqref{saintvenantH}-\eqref{eqsaintvenantE} if and only if
 the equilibrium $M(t,x,\xi)$ is solution of the kinetic
 equation
\begin{equation}
\qquad \frac{\partial M}{\partial t} + \xi \cdot \nabla_{x} M -
g \nabla_{x} z_b \cdot \nabla_{\xi} M = Q(t,x,\xi), 
\label{eq:gibbs}
\end{equation}
where $Q(x,t,\xi)$ is a collision term satisfying
\begin{equation*}
\int_{\mathbb{R}}  Q\ d\xi = \int_{\mathbb{R}}  \xi Q\ d\xi = 0.
\label{eq:collisionda}
\end{equation*}
The solution is an entropy solution if additionally
\begin{equation*}
\int_{\mathbb{R}} \left(\frac{\xi^2}{2} + \frac{g^2}{8k_3}M^2 + gz_b\right) Q  d\xi  \leq 0.
\label{eq:calc_kin}
\end{equation*}
\label{thm:kinetic_sv}
\end{thm}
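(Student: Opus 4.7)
The plan is to establish both directions by taking moments of the kinetic equation \eqref{eq:gibbs} in $\xi$. The common ingredient is a set of moment identities for the Gibbs equilibrium $M(t,x,\xi) = (H/c)\,\Chi((\xi-u)/c)$. Using the change of variable $w = (\xi-u)/c$, the evenness and normalization \eqref{eq:chi1da} of $\Chi$, and the crucial relation $c^2 = gH/2$, one finds
\[
\int_\R M\,d\xi = H,\ \int_\R \xi M\,d\xi = Hu,\ \int_\R \xi^2 M\,d\xi = Hu^2 + \tfrac{g}{2}H^2,\ \int_\R M^3\,d\xi = \tfrac{H^3 k_3}{c^2}.
\]
Compact support of $\Chi$ gives $\int_\R \partial_\xi M\,d\xi = 0$, and integration by parts yields $\int_\R \xi\,\partial_\xi M\,d\xi = -H$.

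For the implication kinetic $\Rightarrow$ Saint-Venant, I integrate \eqref{eq:gibbs} against $1$ and against $\xi$. The moment conditions on $Q$ kill both right-hand sides; the topography term drops at order zero and contributes $gH\,\partial_x z_b$ at order one via $\int \xi\,\partial_\xi M\,d\xi = -H$. This reproduces \eqref{saintvenantH}--\eqref{saintvenantHu}. For entropy, multiply \eqref{eq:gibbs} by the microscopic entropy $\eta_{\mathrm{mic}} := \xi^2/2 + (g^2/(8k_3))M^2 + gz_b$ and integrate in $\xi$: the three moments contribute $Hu^2/2 + gH^2/4$, $gH^2/4$, and $gHz_b$ respectively, summing to exactly $\zeta = Hu^2/2 + gH(\eta+z_b)/2$ since $\eta = H + z_b$. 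A parallel computation on the flux-like moment reconstructs $G = u(\zeta + gH^2/2)$, and the sign hypothesis on $\int \eta_{\mathrm{mic}}\,Q\,d\xi$ yields \eqref{eqsaintvenantE}.

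For the reverse direction, assuming $(H,u)$ is a strong entropy solution, I simply define
\[
Q \;:=\; \partial_t M + \xi\cdot\nabla_x M - g\,\nabla_x z_b\cdot\nabla_\xi M
\]
with $M$ the prescribed Gibbs ansatz. The chain rule $\partial_t M = (\partial_H M)\,\partial_t H + (\partial_u M)\,\partial_t u$, and similarly for $\nabla_x$, combined with the moment identities above, allows direct substitution of the Saint-Venant equations into $\int Q\,d\xi$ and $\int \xi Q\,d\xi$, producing zero. The microscopic entropy moment of $Q$ similarly reduces, via the identities of the previous paragraph, to $\partial_t \zeta + \partial_x G$, whose sign is controlled by \eqref{eqsaintvenantE}.

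The main obstacle is the entropy identity. Conservation moments follow almost mechanically once the change of variable is set up, but the entropy identity only closes because of the specific coefficient $g^2/(8k_3)$ in front of $M^2$---precisely the coefficient that makes $\Chi_0$ appear as the variational minimizer in Lemma \ref{lemminenergy}---and because the normalization $\int w^2 \Chi\,dw = 1$ in \eqref{eq:chi1da} produces the pressure $gH^2/2$ with the correct numerical factor rather than an arbitrary multiple. Careful bookkeeping of the topography source is also required: it enters the momentum balance through $\int \xi\,\partial_\xi M\,d\xi = -H$, and the energy balance through an analogous cancellation that relies on the time-independence of $z_b$.
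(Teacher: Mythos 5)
Your moment identities and the mass--momentum part of the equivalence are fine, and they follow the expected route (the paper itself gives no proof here, deferring to the reference cited just above the theorem): integrating \eqref{eq:gibbs} against $1$ and $\xi$, using $\int_\R M\,d\xi=H$, $\int_\R\xi M\,d\xi=Hu$, $\int_\R\xi^2M\,d\xi=Hu^2+\tfrac g2H^2$ and $\int_\R\xi\,\partial_\xi M\,d\xi=-H$, and, for the converse, defining $Q$ as the residual of the Gibbs ansatz, whose first two $\xi$-moments vanish by \eqref{saintvenantH}--\eqref{saintvenantHu}.

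The entropy step, however, has a genuine gap. The weight $\eta_{\mathrm{mic}}=\tfrac{\xi^2}{2}+\tfrac{g^2}{8k_3}M^2+gz_b$ depends on $M(t,x,\xi)$, so ``multiplying \eqref{eq:gibbs} by $\eta_{\mathrm{mic}}$ and integrating'' does not produce $\partial_t\zeta+\partial_x G$: the cubic term obeys $\int_\R M^2\partial_t M\,d\xi=\tfrac13\,\partial_t\!\int_\R M^3\,d\xi$ (and likewise in $x$), so the derivatives cannot be pulled out through the $M$-dependent multiplier. Your static identities $\int_\R\eta_{\mathrm{mic}}M\,d\xi=\zeta$ and $\int_\R\xi\,\eta_{\mathrm{mic}}M\,d\xi=G$ are true, but they are not what the multiplication yields. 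Carrying the computation out honestly for the Gibbs equilibrium gives
\begin{equation*}
\int_\R \eta_{\mathrm{mic}}\,Q\,d\xi=\partial_t\Bigl(\frac{Hu^2}{2}+\frac{gH^2}{3}+gz_bH\Bigr)+\partial_x\Bigl(\frac{Hu^3}{2}+\frac{5g}{6}uH^2+gz_bHu\Bigr),
\end{equation*}
so that, after using mass conservation, $\partial_t\zeta+\partial_x G=\int_\R\eta_{\mathrm{mic}}Q\,d\xi-\tfrac g6H^2\partial_x u$; the leftover term has no sign, and the stated hypothesis alone does not give \eqref{eqsaintvenantE}. The multiplier that closes the chain rule is the derivative of the kinetic energy density, $\tfrac{\xi^2}{2}+\tfrac{3g^2}{8k_3}M^2+gz_b$ (note the factor $3$), which yields exactly $\partial_t\zeta+\partial_x G=\int_\R\bigl(\tfrac{\xi^2}{2}+\tfrac{3g^2}{8k_3}M^2+gz_b\bigr)Q\,d\xi$; bridging from this to the condition as written in the theorem (coefficient $g^2/(8k_3)$) needs an extra structural argument --- e.g. for the minimizing profile $\Chi_0$ of Lemma~\ref{lemminenergy} the quantity $\tfrac{\xi^2}{2}+\tfrac{3g^2}{8k_3}M^2+gz_b$ is affine in $\xi$ on the support of $M$, so its $Q$-moment is controlled by the two vanishing moments of $Q$ --- and nothing of this kind appears in your proposal. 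Indeed your closing paragraph asserts that the entropy identity closes \emph{because} of the coefficient $g^2/(8k_3)$, which is precisely the step that fails; the reverse-direction entropy claim inherits the same defect.
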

\begin{rem}
 The right hand side $Q(t,x,\xi)$ in the kinetic representation does not vanish in the smoothness region, whatever the choice of $\Chi$, as opposed to the measure $m$ in Section~\ref{subsec:kinform}.
\end{rem}

\subsection{Convergence result for the Saint-Venant system}

Let us deal now with data assimilation on this Saint-Venant system with a bounded source term.
We prove that convergence towards a Saint-Venant solution is possible when observing the water height only. This is the key point of our Saint-Venant study. For that reason, we define a kinetic density in the case we only have access to water height observations
\be
\widetilde{M}(t,x,\xi) = \frac{H}{c}\chi\left(\frac{\xi-\widehat u}{c} \right),
\label{dagibbsM}
\ee
where $H(t,x)$ are the observations and $\widehat u(t,x)$ is the velocity of macroscopic estimator (with the notations ). As a remark, we claim that the development hereafter is adaptable to the case we only have access to $u(t,x)$.
At the kinetic level, the equation satisfied by the estimator density $f$ can be written as follows
\be
\partial_t f(t, x, \xi) + \xi \cdot \nabla_x f(t, x, \xi) - g \nabla_x z_b \nabla_{\xi} f(t, x, \xi) = \lb (\widetilde{M}(t, x, \xi) - f(t, x, \xi)),
\label{DASVST}
\ee
where $z_{b}$ is the topography. Let us suppose that the solution
$f$ of \eqref{DASVST} can also be written under the form of a Gibbs equilibrium using the macroscopic estimator $(\widehat H(t,x), \widehat u(t,x))$ i.e.
\be
f(t, x, \xi) = \frac{\widehat H}{\widehat c}\Chi (\frac{\xi-\widehat u}{\widehat c}).
\label{dagibbsf}
\ee
Integrating in $\xi$, \eqref{DASVST} with \eqref{dagibbsf} corresponds to the macroscopic system (to be compared with \eqref{mass} - \eqref{momentum})
\begin{subequations}
	\begin{numcases}{}
        \partial_t \hat H + \partial_x (\hat H\hat u) = \lb( H - \hat H), \label{mass2}\\
        \partial_t \hat H\hat u + \partial_x (\hat H\hat u^2 + \frac{g\hat H^2}{2}) = \lb \hat u \left( H - \hat H\right). \label{momentum2}
	\end{numcases}
\end{subequations}
\begin{prop}
Let $H(t,x) \in L^{\infty}([0,T]\times \R_{x})$ be solution of \eqref{saintvenantH} - \eqref{saintvenantHu}, and $\widetilde{M}(t,x,\xi)$ be the Gibbs equilibrium built from it following \eqref{dagibbsM}. Let $f(t,x,\xi)$ satisfy \eqref{DASVST}, with the constraint of being of the form \eqref{dagibbsf}. Assuming some regularity on $\widetilde{M}$ defined by
\[
\partial_{t} \widetilde{M} + \xi \cdot \nabla_{x}\widetilde{M} + g\frac{\partial z_b}{\partial x} \left(\frac{\xi - u}{c^2}\right)\widetilde{M}= Q \in C([0,T],V_{1}), 
\]
then, $\exists \gamma > 0$ such that $\forall T > 0$
\begin{equation*}
\norm{f(T) - \widetilde{M}(T)}_{V_{1}} \le \norm{f_0 -
\widetilde{M_{0}}}_{V_{1}}e^{-(\lb - \gamma)T} + \sup_{0 \leq t \leq T}\norm{Q(t)}_{V_1} \frac{1 -
e^{-(\lb - \gamma)T}}{\lb - \gamma}.
\label{resultDASVST}
\end{equation*}
\label{propDASV}
\end{prop}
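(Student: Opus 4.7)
The plan is to derive an $L^{1}$-type stability inequality for the kinetic error $e := f-\widetilde M$, interpret the topography as a perturbation that weakens the effective relaxation rate from $\lb$ to $\lb-\gamma$, and close the estimate by Gronwall's lemma. Subtracting the prescribed evolution of $\widetilde M$ from \eqref{DASVST} gives
$$\partial_{t}e + \xi\cdot\nabla_{x}e + \lb\,e \;=\; g\,\nabla_{x}z_{b}\cdot\nabla_{\xi}f + g\,\frac{\partial z_{b}}{\partial x}\,\frac{\xi-u}{c^{2}}\,\widetilde M \;-\; Q.$$
Following the $L^{1}$ sign-multiplier argument already used in Theorem~\ref{thmsamplingtime} (multiplying by $\sgn(e)$ and integrating over $\R_{x}\times\R_{\xi}$), the transport term vanishes after integration by parts in $x$ and the relaxation term produces $\lb\,\norm{e(t)}_{V_{1}}$, so that
$$\frac{d}{dt}\norm{e(t)}_{V_{1}} + \lb\,\norm{e(t)}_{V_{1}} \;\le\; \norm{\mathcal R(t)}_{V_{1}} + \norm{Q(t)}_{V_{1}},$$
with $\mathcal R := g\,\nabla_{x}z_{b}\cdot\nabla_{\xi}f + g\,(\partial_{x}z_{b})(\xi-u)\widetilde M/c^{2}$.

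The central step is to exhibit $\gamma>0$ such that $\norm{\mathcal R(t)}_{V_{1}}\le \gamma\,\norm{e(t)}_{V_{1}}$, so that $\mathcal R$ is absorbed as a relaxation loss rather than as a source. Both pieces of $\mathcal R$ can be written out explicitly from the Gibbs ansatz \eqref{dagibbsM}--\eqref{dagibbsf}: $\nabla_{\xi}f = (\widehat H/\widehat c^{2})\Chi'((\xi-\widehat u)/\widehat c)$, while $(\xi-u)\widetilde M/c^{2} = (H/c^{3})(\xi-u)\Chi((\xi-\widehat u)/c)$. Introducing the pivot equilibrium that uses $(\widehat H,\widehat u)$ in both slots and subtracting, $\mathcal R$ splits into a part proportional to $(\widehat H-H)$ and another proportional to $(\widehat u-u)$; since both macroscopic discrepancies are $\xi$-moments of $e$ (recall $\widehat H = \int f\,d\xi$ and similarly for $\widehat H\widehat u$), each is bounded by $\norm{e}_{V_{1}}$ up to multiplicative constants depending only on $g$, $\norm{z_{b}'}_{\infty}$ and on uniform upper and lower bounds on $H,\widehat H$.

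Combining the previous steps yields
$$\frac{d}{dt}\norm{e(t)}_{V_{1}}+(\lb-\gamma)\norm{e(t)}_{V_{1}}\le\norm{Q(t)}_{V_{1}},$$
and Gronwall's lemma delivers exactly the announced bound. The delicate part is the absorption in the preceding paragraph: $\Chi'$ is a bounded function only for the semicircular equilibrium $\Chi_{0}$ in \eqref{eq:chi10}, whereas for the indicator $\Chi_{1}$ it is merely a bounded measure, so the argument is cleanest under $\Chi'\in L^{1}_{\xi}$ and must otherwise be read in duality. Uniform constants also tacitly require the non-dry-bed bound $\inf H,\inf \widehat H>0$, which is the standard hyperbolicity hypothesis for Saint-Venant and is implicit in the regularity assumption on $\widetilde M$ stated in the proposition.
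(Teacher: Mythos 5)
There is a genuine gap at the central step: the claimed absorption $\norm{\mathcal R(t)}_{V_1}\le\gamma\,\norm{e(t)}_{V_1}$ is false, because the residual $\mathcal R= g\,\partial_x z_b\,\partial_\xi f+g\,\partial_x z_b\,\frac{\xi-u}{c^{2}}\widetilde M$ does not vanish when $e=f-\widetilde M=0$. Indeed, if $f=\widetilde M$ then $\widehat H=H$, $\widehat c=c$, and, writing $w=(\xi-\widehat u)/c$,
\begin{equation*}
\mathcal R\Big|_{e=0}=g\,\partial_x z_b\,\frac{H}{c^{2}}\Bigl(\Chi'(w)+\frac{\xi-u}{c}\,\Chi(w)\Bigr),
\end{equation*}
which is nonzero (even with $\widehat u=u$) unless $\Chi'(w)=-w\Chi(w)$, i.e.\ unless $\Chi$ is a Maxwellian; for the compactly supported equilibria used here ($\Chi_1$ or $\Chi_0$ in \eqref{eq:chi10}) this identity fails. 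So $\mathcal R$ is of size $O(\norm{f}_{V_1})$, not $O(\norm{e}_{V_1})$: the two pieces of $\mathcal R$ have the same first two $\xi$-moments up to terms controlled by the macroscopic discrepancies, but they do not cancel pointwise in $\xi$, and an $L^1_{x,\xi}$ bound by $\gamma\norm{e}_{V_1}$ cannot hold. Consequently the differential inequality $\frac{d}{dt}\norm{e}_{V_1}+(\lb-\gamma)\norm{e}_{V_1}\le\norm{Q}_{V_1}$ is not obtained, and the Gronwall step has nothing to rest on.

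The paper avoids this by modifying the kinetic equation \emph{before} subtracting: since $f$ is constrained to the Gibbs form \eqref{dagibbsf}, the equation \eqref{DASVST} may be replaced by the macroscopically equivalent \eqref{DASVST2}, in which $\partial_\xi f$ is replaced by $\partial_\xi\overline f$ with $\overline f$ built from $\overline\Chi(z)=\int_z^{+\infty}\xi\,\Chi(\xi)\,d\xi$ (same zeroth moment, even in $\xi$), so that $-g\,\partial_x z_b\,\partial_\xi\overline f=g\,\partial_x z_b\,\frac{\xi-\widehat u}{\widehat c^{2}}\,f$ becomes a multiplicative, zeroth-order term. Subtracting the assumed equation for $\widetilde M$ then yields a \emph{closed} damped transport equation for $e$ with coefficient $\lb+g\,\partial_x z_b\,\frac{\xi-u}{c^{2}}$ and source $-Q$ only; the sign-multiplier and integration you propose then go through, and $\gamma$ arises not as a Lipschitz constant for macroscopic discrepancies but as the uniform bound $\gamma=g\,\norm{\partial_x z_b}_{L^\infty}\,(\xi_{\max}+\norm{u}_{L^\infty})/c^{2}$ of the topography coefficient on the compact $\xi$-support of the equilibria. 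Your final caveats (boundedness of $\Chi'$, lower bounds on $H$) are secondary; the missing idea is precisely this replacement of $\partial_\xi f$ by its moment-equivalent multiplicative form, without which the error equation is not closed in $e$.
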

\begin{rem}
As in the scalar case, the upper bound consists of two terms. The first implies the initial conditions and disappears with time, as soon as $\lb > \gamma$. The second is the filter on the collision term, which is useful once again as soon as $\lb > \gamma$.
\end{rem}
\begin{proof}[Proof of Proposition \ref{propDASV}]
To begin the proof, we introduce another data assimilation problem, which is equivalent to \eqref{DASVST}
\be
\partial_t f(t, x, \xi) + \xi \cdot \nabla_x f(t, x, \xi) - g \nabla_x z_b \nabla_{\xi} \overline{f}(t, x, \xi) = \lambda (\widetilde{M}(t, x, \xi) -
f(t, x, \xi)),
\label{DASVST2}
\ee
where $\overline{f}$ is defined hereafter
\begin{equation*}
\overline{f}(t, x, \xi) = \frac{\widehat H}{\widehat c}\overline{\Chi} (\frac{\xi-\widehat u}{\widehat c}),
\label{dagibbs2}
\end{equation*}
and $\overline{\Chi}$ satisfies
\begin{equation*}
\overline{\Chi}(a) = \overline{\Chi}(-a),\qquad \int{\overline{\Chi}(z)dz} = 1.
\label{propchibarre}
\end{equation*} 
Given the properties of $\overline{\Chi}$, an integration of \eqref{DASVST} and
\eqref{DASVST2} in the $\xi$ variable results in the same macroscopic equations.
Therefore working with \eqref{DASVST2} also leads to the right data assimilation
problem. We choose the particular expression for $\overline{\Chi}$:
\begin{equation*}
\overline{\Chi} = \int_z^{+\infty}{\xi \Chi(\xi)d\xi}.
\label{chibarre}
\end{equation*}
Hence, \eqref{DASVST2} simplifies into
\begin{equation*}
\partial_t f(t, x, \xi) + \xi \cdot \nabla_x f(t, x, \xi) + g \nabla_x z_b\frac{\xi - u}{c^2} f(t, x, \xi) = \lb (\widetilde{M}(t, x, \xi) -
f(t, x, \xi)).
\label{DASVST3}
\end{equation*}
From the hypothesis on $\widetilde{M}(t,x,\xi)$ we have
\be
\partial_t (f - \widetilde{M}) + \xi \cdot \nabla_x(f - \widetilde{M}) + (\lb +
g\frac{\partial z_b}{\partial x} \frac{\xi - u}{c^2})(f - \widetilde{M}) = -Q.
\label{DASVST4} 
\ee
Multiplying \eqref{DASVST4} by $\sgn (f(t, x, \xi) - \widetilde{M}(x, \xi))$ and assuming the source term is bounded leads to
\be
\partial_t \abs{f - \widetilde{M}} + \xi \cdot \nabla_x\abs{f - \widetilde{M}} + (\lb
- g\norm{\frac{\partial z_b}{\partial x}}_{L^{\infty}(\R_{x}^{d})} \frac{\abs{\xi -
u}}{c^2})\abs{f - \widetilde{M}} \leq \abs{Q}.
\label{DASVST6} 
\ee
Integrating \eqref{DASVST6} over the spatial and velocity variables we end up with
\begin{equation*}
\frac{d}{dt} \norm{f - \widetilde{M}}_{V_{1}}
 + \int_{R_x}{\int_{R_{\xi}}{(\lb - g\norm{\frac{\partial z_b}{\partial
x}}_{\infty} \frac{\abs{\xi - u}}{c^2})\abs{f - \widetilde{M}}d\xi dx}}
 \leq \int_{R_x}{\int_{R_{\xi}}{\abs{Q}d\xi}dx}.
\label{DASVST7} 
\end{equation*}
Since $\widetilde{M}$ and $f$ are considered to be Gibbs equilibria of compact
support in $\xi$ we introduce $\Omega$ the support of $M$ and
$\xi_{max}$ defined such that
$$\forall \xi \mbox{ s.t. } \abs{\xi}\ge \xi_{max}, \ f(t, x, \xi) =
\widetilde{M}(t,x,\xi) = 0. $$
Besides, we consider that $u(t,x) \in L^{\infty}([ 0,T ],R_x^{d})$, $Q
\in V_{1}$. We can
therefore proceed further
\begin{equation*}
\begin{aligned}
\frac{d}{dt} \norm{f - \widetilde{M}}_{V_{1}}
 + (\lb - g\norm{\frac{\partial z_b}{\partial x}}_{\infty} \frac{\xi_{max} +
\norm{u}_{\infty}}{c^2})\norm{f - \widetilde{M}}_{V_{1}} \leq \norm{Q(t)}_{V_1}.
\end{aligned}
\label{DASVST8} 
\end{equation*}
Eventually,
\begin{equation*}
\norm{f - \widetilde{M}}_{V_{1}} \le \norm{f_0 -
\widetilde{M_{0}}}_{V_{1}}e^{-(\lb - \gamma)t} + \sup_{0 \leq t \leq T}\norm{Q(t)}_{V_1} \frac{1 -
e^{-(\lb - \gamma)t}}{\lb - \gamma},
\label{DASVST9}
\end{equation*}
where we have set
$$\gamma = g\norm{\frac{\partial z_b}{\partial x}}_{\infty} \frac{\xi_{max} +
\norm{u}_{\infty}}{c^2}.$$
\end{proof}
In a second step, we prove that the observation of $H$ is enough to have convergence of $f$ towards $M$, where $M$ is defined as follows
\[
M = \frac{H}{c}\chi\left(\frac{\xi- u}{c} \right). 
\]
This means that $M$ is entirely built with observations (unlike $\widetilde{M}$) and we want to know if the problem defined by \eqref{DASVST} leads to a convergence of $(\widehat H, \widehat u)$ towards $(H,u)$. This is the key point of the study on the Saint-Venant system. 
\begin{prop}
Under the hypothesis of Prop.\ref{propDASV}, we have $\forall$ $T>0$
\begin{equation*}
\norm{f - M}_{V_{1}} \xrightarrow[\lb \to +\infty]{}0.
\end{equation*}
\label{propsaintvenant2}
\end{prop}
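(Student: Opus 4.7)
The plan is to start from the triangle inequality
\[
\norm{f - M}_{V_1} \le \norm{f - \widetilde M}_{V_1} + \norm{\widetilde M - M}_{V_1},
\]
and control the two terms separately. For the first term, I would directly invoke Proposition~\ref{propDASV}: as $\lb \to \infty$, both the initial-condition contribution $\norm{f_{0} - \widetilde{M_{0}}}_{V_{1}} e^{-(\lb - \gamma)T}$ and the filtered collision remainder $\sup_{[0,T]}\norm{Q}_{V_{1}}\,\tfrac{1 - e^{-(\lb - \gamma)T}}{\lb - \gamma}$ vanish, so this term tends to $0$ uniformly on $[0,T]$.

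For the second term, I would exploit the fact that $\widetilde M$ and $M$ share the same height $H$ and differ only through their velocity argument ($\widehat u$ versus $u$). Performing the change of variable $\eta = (\xi - u)/c$ and using the $BV$ regularity of the Gibbs profile $\chi$ (both $\chi_{0}$ and $\chi_{1}$ from Section~\ref{subsec:kindesc} have finite total variation) gives
\[
\int_{\R_{\xi}} \abs{\widetilde M - M}\, d\xi = H \int_{\R}\abs{\chi\bigl(\eta+\tfrac{u-\widehat u}{c}\bigr) - \chi(\eta)}\,d\eta \le \mathrm{TV}(\chi)\,\sqrt{\tfrac{2H}{g}}\,\abs{u - \widehat u},
\]
hence, after integration in $x$,
\[
\norm{\widetilde M - M}_{V_{1}} \le C_{\chi}\,\int_{\R_{x}} \sqrt{H}\, \abs{\widehat u - u}\, dx.
\]

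It therefore remains to establish that $\widehat u \to u$ as $\lb \to \infty$. I would first take the zeroth moment of $f - \widetilde M$, which together with the bound from Proposition~\ref{propDASV} yields $\widehat H \to H$ in $L^{1}(\R_{x})$. Combining next the macroscopic observer equations \eqref{mass2}-\eqref{momentum2} and dividing by $\widehat H$ (assumed bounded away from $0$), the nudging source terms cancel and one recovers the unperturbed velocity equation
\[
\partial_{t}\widehat u + \widehat u\,\partial_{x}\widehat u + g\,\partial_{x}(\widehat H + z_{b}) = 0,
\]
identical to the one satisfied by $u$ with $H$ in place of $\widehat H$. The error $v = \widehat u - u$ then solves a linear transport equation with source $-g\,\partial_{x}(\widehat H - H)$, which, combined with stability of the underlying smooth Saint-Venant flow, should force $v \to 0$ once $\widehat H - H$ converges in a sufficiently strong topology.

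The main obstacle is precisely this last step: the convergence $\widehat H \to H$ in $L^{1}$ does not by itself control $\partial_{x}(\widehat H - H)$, which is what enters the equation for $v$. Closing the argument requires either importing a uniform-in-$\lb$ $BV$ or Sobolev bound on $\widehat H$ (so that the strong $L^{1}$ convergence can be interpolated with the regularity bound) or, more in the spirit of the rest of the paper, working entirely at the kinetic level by feeding the $\sqrt{H}$-weighted estimate above together with first-moment information on $f - \widetilde M$ into a Gronwall-type argument on $\norm{f - M}_{V_{1}}$ directly. Which route is chosen will dictate the precise regularity needed in addition to the hypotheses of Proposition~\ref{propDASV}.
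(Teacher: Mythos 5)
Your opening move coincides with the paper's: the triangle inequality
\[
\norm{f - M}_{V_1} \le \norm{f - \widetilde M}_{V_1} + \norm{\widetilde M - M}_{V_1},
\]
with the first term killed by Proposition~\ref{propDASV} as $\lb \to \infty$. The divergence, and the genuine gap, is in the second term. Your reduction of $\norm{\widetilde M - M}_{V_1}$ to a weighted $L^1$ norm of $\widehat u - u$ is fine, but the step that is supposed to give $\widehat u \to u$ is never closed, and the tools you invoke cannot close it as stated: the $L^1(\R_x)$ convergence $\widehat H \to H$ obtained from the zeroth moment of $f - \widetilde M$ gives no control whatsoever on $\partial_x(\widehat H - H)$, which is exactly the source term in your transport equation for $v = \widehat u - u$; no uniform-in-$\lb$ $BV$ or Sobolev bound on $\widehat H$ is available from the hypotheses of Proposition~\ref{propDASV}; and dividing \eqref{mass2}--\eqref{momentum2} by $\widehat H$ assumed bounded away from zero is an additional hypothesis not granted anywhere. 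You acknowledge this yourself, so the proposal is an incomplete proof rather than a wrong one.

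The paper avoids this difficulty entirely by showing that the second term is not merely small but \emph{identically zero}. Since $\widetilde M$ in \eqref{dagibbsM} is built with the observed height $H$, it also solves the kinetic observer equation with data $M$, so the difference $\delta = \widetilde M - M$ satisfies the damped transport equation
\[
\partial_t \delta + \xi \cdot \nabla_x \delta - g\,\partial_x z_b\, \partial_\xi \delta = -\lb\, \delta,
\]
whose zeroth moment vanishes pointwise, $\int_{\R_\xi} \delta\, d\xi = H - H = 0$. Integrating the equation in $\xi$ then forces $\partial_x\bigl(\int_{\R_\xi} \xi\, \delta\, d\xi\bigr) = \partial_x\bigl(H(\widehat u - u)\bigr) = 0$, and knowing $u$ at a single point (e.g.\ a boundary) yields $\widehat u = u$, hence $\widetilde M = M$. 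This is a rigidity statement (no second solution of the form $(H,\widehat u)$ with $\widehat u \neq u$, as the paper remarks), and it requires no quantitative estimate of $\widehat u - u$, no regularity of $\widehat H - H$, and no lower bound on $\widehat H$. If you want to salvage your route, you would have to import precisely this kind of moment argument rather than the macroscopic stability estimate you sketch.
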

\begin{proof}[Proof of Proposition \ref{propsaintvenant2}]
The triangular inequality tells us that
\[
\norm{f - M}_{V_{1}}\leq\norm{f - \widetilde{M}}_{V_{1}} + \norm{\widetilde{M} - M}_{V_{1}}. 
\]
We have already proved in Prop.\ref{propDASV} that $\norm{f - \widetilde{M}}_{V_{1}} \xrightarrow[\lb \to +\infty]{} 0$. It is clear that $\widetilde{M}$ is also one solution to the data assimilation problem
\[
\frac{\partial \widetilde{M}}{\partial t} + \xi \cdot \nabla_{x}  \widetilde{M}-
g\frac{\partial z_b}{\partial x}\frac{\partial \widetilde{M}}{\partial
  \xi} = \lb(M-\widetilde{M}), 
\]
Subtracting it with \eqref{eq:gibbs} gives
\begin{equation}
\frac{\partial \delta}{\partial t} + \xi \cdot \nabla_{x}  \delta -
g\frac{\partial z_b}{\partial x}\frac{\partial \delta}{\partial
  \xi} = -\lb \delta,
\label{eq:delta}
\end{equation}
where $\delta = \widetilde{M} - M$. And since
$$\int_{\mathbb{R}} \delta(t,x,\xi)\ d\xi = H(t,x,\xi) - H(t,x,\xi) = 0,$$
the integration in $\xi$ of~\eqref{eq:delta} gives necessarily
$u=\widehat u$ and therefore $\widetilde{M} = M$ (as soon
as $u$ is known at one point e.g. at one of the boundaries). And the
proof is complete.
\end{proof}
\begin{rem}
In other words, if $(H,u)$ is a solution to the Saint-Venant system, there are no other solutions of the form $(H,\widehat u)$ with $u\neq\widehat u$.
\end{rem}
\label{subsec:saintvenant}

\subsection{Numerical results}
\label{subsec:svtnumerics}
Let us start first by some reminder of the techniques used for an appropriate discretization of the shallow water equations without assimilation. To approximate the solution of the Saint-Venant system~\eqref{saintvenantH}-\eqref{saintvenantHu}, we use a finite volume framework.
We assume that the computational domain is discretised by $I$ nodes $x_i$.
  We denote $C_i$ the cell of length $\Delta
x_i=x_{i+1/2}-x_{i-1/2}$ with $x_{i+1/2}=(x_i+x_{i+1})/2$. For the time discretization, we denote $t^n = \sum_{k \leq n} \Delta t^k$ where the time steps $\Delta t^k$ will be
precised later though a CFL condition. The ratio between the space and
time steps is  $\sigma^n_i=\Delta t^n/\Delta x_i$. We denote
$X^n_{i}=(H^n_i,q^n_{i})$ the approximate solution at time $t^n$ on
the cell $C_i$ with $q^n_{i}=H^n_i u^n_{i}$.

\subsubsection{The hydrostatic reconstruction technique}

The hydrostatic reconstruction consists in a modification of the
bottom topography and the water depths at the cell interfaces, see
\cite{bristeau1}. More precisely the quantities $H^n_{i+1/2\pm}$ are
defined by
\begin{equation}
H^n_{i+1/2-} = H^n_i + z_{b,i} - z_{b,i+1/2},\quad H^n_{i+1/2+} =
H^n_{i+1} + z_{b,i+1} - z_{b,i+1/2},
\label{eq:def_rec}
\end{equation}
with
\be
z_{b,i+1/2}  = \max(z_{b,i},z_{b,i+1}),\label{eq:zb_rec}
\ee
A nonnegativity-preserving truncation of the leading order
depths is also applied in~\eqref{eq:def_rec}
\begin{equation}
H^n_{i+1/2\pm} = \max(0,
H^n_{i+1/2\pm}).
\label{eq:pos_rec}
\end{equation}

\subsubsection{Discrete scheme}
\label{subsec:discrete_scheme}

To precise the numerical scheme, we deduce 
a finite volume kinetic scheme from the kinetic
interpretation~\eqref{eq:gibbs} of the Saint-Venant system.
First, we define the discrete densities of particles $M_{i}^n$ by	
\begin{equation}
M^n_i = M_{i}^n (\xi)= \frac{H^n_{i}}{c^n_i} \chi\left(\frac{\xi -
    u^n_{i}}{c^n_i}\right),
\label{eq:gibbs_d}
\end{equation}
with $c^n_i= \sqrt{ \frac{gH^n_{i}}{2}}$. Then we propose a discretisation of~\eqref{eq:gibbs} under the form
\begin{eqnarray}
M_{i}^{n+1-} & = & M_{i}^{n} - \sigma^n_i \left( {\mathcal M}^n_{i+1/2}  -
  {\mathcal M}_{i-1/2}^{n} \right),
  \label{eq:cindis0}
\end{eqnarray}
with
\begin{eqnarray*}
{\mathcal M}^n_{i+1/2} & = &  \xi M_{i+1/2}^{n} - g \Delta
z_{b,i+1/2-}\frac{\partial \overline{M}^n_{i+1/2-}}{\partial \xi},\\
{\mathcal M}^n_{i-1/2} & = &  \xi M_{i-1/2}^{n} - g \Delta
z_{b,i-1/2+}\frac{\partial \overline{M}^n_{i-1/2+}}{\partial \xi},
\end{eqnarray*}
and
\begin{eqnarray*}
M^n_{i+1/2} & = & M^n_{i+1/2-}\1_{\xi\geq 0} + M^n_{i+1/2+}\1_{\xi\leq
  0},\mbox{ with } M^n_{i+1/2-}  =  \frac{H^n_{i+1/2-}}{c^n_{i+1/2-}} \chi\left(\frac{\xi -
    u^n_{i}}{c^n_{i+1/2-}}\right),\\
\Delta z_{b,i+1/2-} & = & z_{b,i+1/2} - z_{b,i},\quad \Delta z_{b,i-1/2+}  =  z_{b,i-1/2} - z_{b,i}, \\
\overline{M}^n_{i+1/2-} & = & \frac{H^n_i + H^n_{i+1/2-}}{2c^n_{i+1/2-}} \chi\left(\frac{\xi -
    u^n_{i}}{c^n_{i+1/2-}}\right),\quad \overline{M}^n_{i-1/2+}  =  \frac{H^n_i + H^n_{i-1/2+}}{2c^n_{i-1/2+}} \chi\left(\frac{\xi -
    u^n_{i}}{c^n_{i-1/2+}}\right).
\end{eqnarray*}

The discrete scheme (\ref{eq:cindis0}) does not take
into account the collision term which relaxes $M_{i}^{n+1-}$
to a Gibbs equilibrium. It is used in a second step introducing a discontinuity at time
$t^{n+1}$ on $M$ and replacing $M_{i}^{n+1-}$ by an equilibrium
\[
M_i^{n+1} = M_i^{n+1 - } + \Delta t^n Q_i^n,  
\]
such that $M_i^{n+1}$ is supposed to be of the form \eqref{eq:gibbs_d}
where $H^{n+1}_i$ and $u^{n+1}_i$ are computed using an integration of~\eqref{eq:cindis0}
\begin{equation}
X_i^{n+1} = \left( \begin{array}{c} H_i^{n+1}\\ H_i^{n+1}u_i^{n+1}
 \end{array} \right) = \int_\R {\mathcal K}(\xi) M_{i}^{n+1-} (\xi)\
d\xi,
\label{eq:Xi}
\end{equation}
with ${\mathcal K}(\xi)$ is the vector ${\mathcal K}(\xi) = (1,\xi)^T$.
Notice that $M_{i}^{n+1}$ is discontinuous in the sense that
$M_{i}^{n+1} \neq M_{i}^{n+1-}$ whereas the macroscopic variables
remain continuous $X_i^{n+1} = X_i^{n+1-}$.

\subsubsection{Fluxes calculus}
\label{subsec:fluxes}
Using (\ref{eq:Xi}), we are now able to precise the numerical scheme
for the Saint-Venant system \eqref{saintvenantH}-\eqref{eqsaintvenantE}
\begin{equation}
X_i^{n+1} = X_i^{n} - \sigma_i^n \left( {\mathcal F}^n_{i+1/2} - {\mathcal
    F}^n_{i-1/2} \right),
\label{eq:fvda}
\end{equation}
with
\begin{eqnarray}
& & {\mathcal F}^n_{i+1/2} = \int_\R  {\mathcal K}(\xi) {\mathcal M}^n_{i+1/2} (\xi) \ d\xi.\label{eq:fxda}
\end{eqnarray}
Now, using~\eqref{eq:cindis0}, we are able to precise the computation of the macroscopic
fluxes in \eqref{eq:fvda}. If we denote
\begin{equation*}
{\mathcal F}(X_{i+1/2}^n) = \left( {\mathcal F}_H(X_{i+1/2}^n), {\mathcal F}_{q}(X_{i+1/2}^n)\right)^T,
\label{eq:fluxda}
\end{equation*}
then we have
\begin{multline}
 {\mathcal F}_H(X_{i+1/2}^n)  =  H_{i+1/2+}^n \int_{z\leq -\frac{u_{i+1}^n}{c_{i+1/2+}^n}}
(u_{i+1}^n + z c_{i+1/2+}^n)\chi(z)\ dz \\
 + H_{i+1/2-}^n \int_{z\geq -\frac{u_{i}^n}{c_{i+1/2-}^n}}
(u_{i}^n + z c_{i+1/2-}^n) \chi(z)\ dz,\label{eq:FHda}
\end{multline}
and
\begin{multline}
{\mathcal F}_{q}(X_{i+1/2}^n)  =  H_{i+1/2+}\int_{z\leq
  -\frac{u_{i+1}^n}{c_{i+1/2+}^n}}
(u_{i+1}^n + z c_{i+1/2+}^n)^2 \chi(z)\ dz\\
 +H_{i+1/2-}^n \int_{z\geq
  -\frac{u_{i}^n}{c_{i+1/2-}^n}}
(u_{i}^n + z c_{i+1/2-}^n)^2 \chi(z)\ dz \\
  +\frac{g\Delta z_{b,i+1/2-}}{2}(H^n_{i+1/2-} + H^n_{i}).\label{eq:Fq}
\end{multline}

\subsubsection{Properties of the numerical scheme}
\label{sec:properties}

The previous scheme exactly corresponds to the original
reconstruction technique proposed by Audusse {\it et al.} \cite{bristeau1} with a
kinetic solver for the conservative part and therefore it is endowed with the properties
depicted in~\cite[theorem~2.1]{bristeau1}. We recall hereafter the statement of the theorem.
\begin{thm}
The kinetic flux for the homogenous Saint-Venant system is
consistent, preserves the nonnegativity of the water depth and satisfies an
in-cell entropy inequality corresponding to the entropy $\zeta$
in~\eqref{eqsaintvenantE}. Then the finite volume scheme~\eqref{eq:fvda}
with the definitions~\eqref{eq:def_rec},\eqref{eq:zb_rec},\eqref{eq:pos_rec},\eqref{eq:Xi},\eqref{eq:FHda}
and\eqref{eq:Fq}
\begin{itemize}
\item[{\it (i)}] preserves the nonnegativity of the water depth,
\item[{\it (ii)}]  preserves the steady state of a lake at rest
  $H+z_b=cst$,
\item[{\it (iii)}] is consistent with the Saint-Venant
  system~\eqref{saintvenantH}-\eqref{saintvenantHu},
\item[{\it (iv)}] satisfies an in-cell entropy inequality associated
  to the entropy $\zeta$ defined in~\eqref{saintvenantE}
$$\frac{d}{dt} \zeta(X_i(t),z_{b,i}) + \frac{1}{\Delta x_i} \left(G_{i+1/2} -
G_{i-1/2}\right) \leq 0.$$
The preceding inequality is a semi-discrete version of~\eqref{eqsaintvenantE}.
\end{itemize}
\label{thm:hyd_rec}
\end{thm}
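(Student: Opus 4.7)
The plan is to reduce the four claims to the kinetic structure already established in Section~\ref{subsec:discrete_scheme}--\ref{subsec:fluxes}, since the statement is essentially a specialization of the Audusse--Bristeau hydrostatic reconstruction to the present kinetic flux. I would begin by observing that on each cell interface the numerical flux $\mathcal{M}^n_{i+1/2}$ is, for fixed $\xi$, a scalar upwind flux in the variable $M$, because the advection speed $\xi$ is independent of the unknown. The topography correction is integrated into the reconstructed heights $H^n_{i+1/2\pm}$ through \eqref{eq:def_rec}--\eqref{eq:pos_rec}, so that the kinetic scheme~\eqref{eq:cindis0} amounts to a collection of linear scalar transport updates in $\xi$ augmented by a relaxation step to a Gibbs equilibrium.

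For \textit{(i)}, I would show that under the CFL $\sigma^n_i |\xi| \leq 1$ for every $\xi$ in the support of the $\chi$-based equilibrium (i.e. $\sigma_i^n(\|u\|_\infty + w_\chi c^n_i)\le 1$), the upwind structure guarantees $M_{i}^{n+1-}(\xi)\ge 0$ for all $\xi$; the truncation \eqref{eq:pos_rec} is exactly what makes the reconstructed Gibbs functions nonnegative on both sides of each interface, so no negative contribution enters. Integrating in $\xi$ via~\eqref{eq:Xi} then yields $H_i^{n+1}\ge 0$. For \textit{(ii)}, I would substitute $u^n\equiv 0$ and $H^n_i+z_{b,i}=\text{cst}$ into \eqref{eq:def_rec}--\eqref{eq:zb_rec}, which forces $H^n_{i+1/2-}=H^n_{i+1/2+}$ and $c^n_{i+1/2-}=c^n_{i+1/2+}$; by symmetry of $\chi$ the mass flux in \eqref{eq:FHda} vanishes, while in \eqref{eq:Fq} the momentum contributions reduce to $\tfrac12 g H_{i+1/2-}^{n\,2}$ on each side and the topography term $\tfrac{g\Delta z_{b,i+1/2-}}{2}(H^n_{i+1/2-}+H^n_i)$ telescopes against its neighbor, leaving $X^{n+1}_i=X^n_i$.

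For \textit{(iii)}, consistency is a Taylor expansion argument: plugging a smooth $(H,u)$ into the reconstruction gives $H^n_{i+1/2\pm} = H(x_{i+1/2})+\mathcal{O}(\Delta x)$ and $c^n_{i+1/2\pm}=c(x_{i+1/2})+\mathcal{O}(\Delta x)$, so each interface moment $\int\mathcal{K}(\xi)\mathcal{M}^n_{i+1/2}\,d\xi$ converges to the exact Saint-Venant flux $(Hu,\,Hu^2+\tfrac{g}{2}H^2)^T$ evaluated at $x_{i+1/2}$, and the extra topography piece in \eqref{eq:Fq} furnishes a discrete, well-balanced version of the source $-gH\partial_x z_b$. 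The main work here is to check that the moments $\int \chi(z)\,dz$, $\int z\chi(z)\,dz=0$ and $\int z^2\chi(z)\,dz=1$ give exactly the fluxes of the macroscopic system.

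The main obstacle, and the step requiring the most care, is \textit{(iv)}: the discrete entropy inequality. Here I would proceed in two stages. In the transport stage \eqref{eq:cindis0}, because the kinetic equation for fixed $\xi$ is linear scalar transport, the convex function $\eta(M)=\tfrac{\xi^2}{2}M+\tfrac{g^2}{8k_3}M^3+gz_bM$ composed with the upwind flux satisfies a discrete entropy inequality cell by cell (classical Kruzkov-type estimate for monotone schemes), provided we again use the reconstruction to absorb $z_b$; this is exactly the role of the definitions \eqref{eq:def_rec}. Integrating in $\xi$ turns this microscopic entropy into the macroscopic one $\zeta$ of \eqref{saintvenantE}. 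In the relaxation stage, $M_i^{n+1-}$ is projected onto the Gibbs equilibrium $M_i^{n+1}$ with the same $(H,Hu)$ moments; by the energy-minimization property of $\chi_0$ recalled in Lemma~\ref{lemminenergy}, this projection can only decrease $\int\eta(M)\,d\xi$, so the overall scheme dissipates the discrete version of $\zeta$. Combining the two stages yields the semi-discrete inequality in \textit{(iv)}. This is where one must be careful that Lemma~\ref{lemminenergy} applies to the specific $\chi$ being used (it is sharp only for $\chi_0$), and invoke \cite[Theorem~2.1]{bristeau1} for the general $\chi$ satisfying \eqref{eq:chi1da} to close the argument.
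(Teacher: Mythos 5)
The paper does not actually prove Theorem~\ref{thm:hyd_rec}: it observes that the scheme coincides with the hydrostatic reconstruction of Audusse \emph{et al.} combined with a kinetic solver and defers entirely to \cite[Theorem~2.1]{bristeau1}. Your sketch reconstructs essentially the argument of that reference (and, for the entropy part, the projection-plus-convexity mechanism the paper itself uses later in Theorem~\ref{thmentropy}), so in substance you are on the same path as the source the paper relies on; points (ii) and (iii) are handled exactly as in the standard proof, up to the minor remark that the pressure/topography cancellation at rest is a within-cell balance of the two interface fluxes rather than a telescoping with the neighbouring cell.

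Two places in your write-up gloss over where the real work lies. First, pointwise nonnegativity of $M_i^{n+1-}(\xi)$ is not a pure upwind statement: the interface fluxes ${\mathcal M}^n_{i\pm1/2}$ contain the terms $g\,\Delta z_{b}\,\partial_\xi \overline{M}^n$, which are not convex-combination contributions; the standard proof of (i) works on the macroscopic height update, where these terms drop out because $\int_{\R}\partial_\xi \overline{M}^n\,d\xi=0$, and then uses the nonnegativity of the reconstructed heights \eqref{eq:pos_rec} together with a CFL condition on the support of the equilibria. Second, for (iv) your stage-one claim of a fully discrete entropy inequality ``with the reconstruction absorbing $z_b$'' overstates what is available: the $\partial_\xi$ topography fluxes destroy the convex-combination structure, which is precisely why both the statement of (iv) and \cite[Theorem~2.1]{bristeau1} are only semi-discrete in time. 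Your convexity-plus-Gibbs-minimization argument (Lemma~\ref{lemminenergy}) is the correct mechanism for the homogeneous kinetic part, as in Theorem~\ref{thmentropy}, but the topography contribution must be treated as in the reference; since you close by invoking \cite[Theorem~2.1]{bristeau1} for the general $\Chi$, the argument is acceptable and is, in effect, the same as the paper's.
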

Since the proof of the preceeding theorem is similar to the one given
in~\cite[theorem~2.1]{bristeau1}, it is not detailed here.

\subsubsection{Numerical scheme for the data assimilation problem}
Let us deal now with the numerical scheme for the assimilation problem \eqref{DASVST}. Since the handling of the topography source terms with hydrostatic reconstruction has been explained in the previous section, we will only consider here the source term associated to the nudging term. We are working on
\be
\partial_{t}f(t,x,\xi) + \xi \cdot \nabla_{x} f(t,x,\xi)= \lb \left( \widetilde{M} - f \right),
\label{svtnotopo}
\ee
for which we propose the two steps numerical scheme composed first of a transport stage
\be
f_{i}^{n+1-} = f_{i}^{n} - \sigma_{i}^{n} \xi \left( f_{i+1/2}^{n} - f_{i-1/2}^{n}\right) + \Delta t^{n} \lb \left( \widetilde{M}_{i}^{n} - f_{i}^{n}\right),
\label{svtkinschemenotopo}
\ee
where the numerical fluxes are computed using a simple upwind scheme
\begin{equation}
	f_{i+1/2}^n =
	\begin{cases}
	    f_i^{n}, & \text{ for } \xi \geq 0  \\
	    f_{i+1}^{n}, & \text{ for } \xi < 0; 
	\end{cases}
 \label{upwindda}
\end{equation}
and a collapse stage that forces $f_i^{n+1}$ to be of the form
\be
f_i^{n+1} = \frac{\widehat H^{n+1}_{i}}{\widehat c^{n+1}_i} \chi\left(\frac{\xi
    - \widehat u^{n+1}_{i}}{\widehat c^{n+1}_i}\right).
\label{gibbsobserver}
\ee
As in the 1D case for Burgers' equation, this leads, by an integration against $\xi$ of the first two moments, to a macroscopic scheme
\begin{align*}
&\widehat H_{i}^{n+1} = \widehat H_{i}^{n} - \sigma_{i}^{n} \left( ({\mathcal F}_H(X_{i+1/2}^n) - {\mathcal F}_H(X_{i-1/2}^n) \right) + \lb \Delta t^{n}\left( H_{i}^{n} - \widehat H_{i}^{n}\right),\\
&\widehat H_{i}^{n+1}\widehat u_{i}^{n+1}=\widehat H_{i}^{n}\widehat u_{i}^{n} 
- \sigma_{i}^{n} \left(  {\mathcal F}_q(X_{i+1/2}^n) -  {\mathcal F}_q(X_{i-1/2}^n)\right)+ \lb \Delta t^{n} \left( H_{i}^{n}\widehat u_{i}^{n} - \widehat H_{i}^{n}\widehat u_{i}^{n}\right).
\label{svtmacroschemenotopo}
\end{align*}
Let us denote 
\begin{equation*}
e(f) = \frac{\xi^{2}}{2}f + \frac{g^{2}}{8k_{3}}f^{3},
\end{equation*}
where $k_{3} = \int_{\R}{\chi^{3}(\omega)d\omega}$, 
\[
\widehat \zeta_{i}^{n} = \int_{\R}{e(f_{i}^{n})(\xi)d\xi} \quad \mbox{ and } \quad \widetilde{\zeta}_{i}^{n} = \int_{\R}{e(\widetilde{M}_{i}^{n})(\xi)d\xi}, 
\]
respectively the observer energy and the observations energy of cell $i$ at time $n$, and
\[
 G_{i+1/2}^{n} = \int_{\R}{\xi e(f_{i+1/2}^{n})(\xi)d\xi}, 
\]
 the energy flux between cell $i$ and cell $i+1$. Then we can state a deeper stability property than the positivity of the water height.
\begin{thm}
Under the CFL condition 
\be
\Delta t^{n} \leq \min_{i}\frac{\Delta x_{i}}{\lb \Delta x_{i} +\abs{\widehat u_{i}^{n}} + \widehat c_{i}},
\label{cflobserver}
\ee
the finite volume scheme \eqref{svtkinschemenotopo} with the definition \eqref{upwindda} preserves the non-negativity of the water height and admits the discrete entropy inequality:
 \begin{equation*}
 \widehat \zeta_{i}^{n+1} \leq \widehat \zeta_{i}^{n} - \sigma_{i}^{n}\left( G_{i+1/2}^{n} - G_{i-1/2}^{n}\right) + \lb \Delta t^{n} \left( \widetilde{\zeta}_{i}^{n}-\widehat \zeta_{i}^{n}\right).
 \label{discreteentropy}
 \end{equation*}
 \label{thmentropy}
\end{thm}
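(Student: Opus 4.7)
The plan is to reduce both statements to a pointwise identity for $f_i^{n+1-}$ at the kinetic level, integrate in $\xi$, and then invoke Lemma~\ref{lemminenergy} for the collapse back to a Gibbs equilibrium.

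First I rewrite \eqref{svtkinschemenotopo} with the upwind choice \eqref{upwindda}. For $\xi\geq 0$,
\[
f_i^{n+1-}(\xi) = \bigl(1-\sigma_i^n\xi-\Delta t^n\lb\bigr)f_i^n(\xi)+\sigma_i^n\xi\,f_{i-1}^n(\xi)+\Delta t^n\lb\,\widetilde M_i^n(\xi),
\]
and symmetrically for $\xi\leq 0$ with $f_{i+1}^n$ replacing $f_{i-1}^n$. The three coefficients sum to $1$, and the CFL condition \eqref{cflobserver} ensures that $1-\sigma_i^n|\xi|-\Delta t^n\lb\geq 0$ throughout the support $\{|\xi|\leq|\widehat u_i^n|+\widehat c_i\}$ of the Gibbs equilibrium $f_i^n$; outside that support $f_i^n(\xi)=0$ and the two remaining contributions $\sigma_i^n|\xi|f_{i\mp1}^n(\xi)$ and $\Delta t^n\lb\,\widetilde M_i^n(\xi)$ are non-negative. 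In both cases $f_i^{n+1-}(\xi)\geq 0$, and integrating in $\xi$ via \eqref{eq:Xi} yields $\widehat H_i^{n+1}\geq 0$.

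Next, for the discrete entropy bound, I exploit that $f\mapsto e(\xi,f)=\frac{\xi^2}{2}f+\frac{g^2}{8k_3}f^3$ is convex on $[0,\infty)$ with $e(\xi,0)=0$. On the support of $f_i^n$ the above convex combination allows the application of Jensen's inequality:
\begin{multline*}
e(\xi,f_i^{n+1-}) \leq (1-\sigma_i^n|\xi|-\Delta t^n\lb)\,e(\xi,f_i^n)\\+\sigma_i^n\xi^+\,e(\xi,f_{i-1}^n)+\sigma_i^n\xi^-\,e(\xi,f_{i+1}^n)+\Delta t^n\lb\,e(\xi,\widetilde M_i^n).
\end{multline*}
Integrating in $\xi$, the two transport contributions regroup through the upwind definition \eqref{upwindda} into the flux difference $\sigma_i^n(G_{i+1/2}^n-G_{i-1/2}^n)$, while the relaxation term delivers $\lb\Delta t^n(\widetilde\zeta_i^n-\widehat\zeta_i^n)$. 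This produces the stated inequality for the pre-collapse quantity $\widehat\zeta_i^{n+1-}=\int e(\xi,f_i^{n+1-})\,d\xi$.

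I then close the proof with the collapse step. By construction \eqref{gibbsobserver}, $f_i^{n+1}$ is the Gibbs equilibrium sharing the moments $\widehat H_i^{n+1}$ and $\widehat H_i^{n+1}\widehat u_i^{n+1}$ of $f_i^{n+1-}$ prescribed by \eqref{eq:Xi}, and Lemma~\ref{lemminenergy} characterises such a Gibbs equilibrium as the minimiser of $\int e(\xi,\cdot)\,d\xi$ among non-negative densities with those first two moments; hence $\widehat\zeta_i^{n+1}\leq\widehat\zeta_i^{n+1-}$ and combining yields the claim. The most delicate step I anticipate is the Jensen argument in the region $|\xi|>|\widehat u_i^n|+\widehat c_i$, where the rewriting is no longer a genuine convex combination: there one must use $f_i^n(\xi)=0$ and $e(\xi,0)=0$ together with a careful accounting of the neighbouring-cell supports --- essentially the same mechanism as in the proof of Theorem~\ref{thm:hyd_rec} --- to rule out spurious cubic contributions before integration.
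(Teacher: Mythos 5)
Your proof follows essentially the same route as the paper's: rewriting the scheme with the upwind fluxes as a convex combination under the CFL condition to obtain nonnegativity of $f_i^{n+1-}$ (hence of $\widehat H_i^{n+1}$), applying convexity of $e$ and integrating in $\xi$ to get the pre-collapse entropy inequality, and concluding with Lemma~\ref{lemminenergy} for the collapse to the Gibbs equilibrium. The support subtlety you flag for $\abs{\xi}$ beyond the CFL-controlled range is one the paper's own proof silently glosses over, so your treatment is, if anything, slightly more careful on that point.
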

\begin{proof}{Proof of Theorem~\ref{thmentropy}}
Let us denote
$$\xi^{+} = \max(\xi,0) \quad \mbox{ and} \quad f^{-} = \min(\xi,0).$$
From \eqref{svtnotopo}, the scheme \eqref{svtkinschemenotopo} with \eqref{upwindda} leads to
\[
f_{i}^{n+1-} = f_{i}^{n} - \sigma_{i}^{n} \xi^{+}\left(f_{i}^{n} - f_{i - 1}^{n}\right) - \sigma_{i}^{n} \xi^{-}\left(f_{i+1}^{n} - f_{i }^{n}\right) + \lb \Delta t^{n} \left( \widetilde{M}_{i}^{n} - f_{i}^{n}\right), 
\]
which can be rewritten
\be
f_{i}^{n+1-} = f_{i}^{n}\left( 1 - \sigma_{i}^{n} \xi^{+} + \sigma_{i}^{n} \xi^{-} - \lb \Delta t^{n}\right) + \sigma_{i}^{n} \xi^{+} f_{i - 1}^{n}- \sigma_{i}^{n} \xi^{-}f_{i+1}^{n}  + \lb \Delta t^{n} \widetilde{M}_{i}^{n}. 
\label{scheme1}
\ee
Under the condition that 
\[
1 - \sigma_{i}^{n} \abs{\xi} - \lb \Delta t^{n} \leq 0,
\]
which can be written under the form of the CFL condition \eqref{cflobserver},
the scheme \eqref{scheme1} is a convex combination of $f_{i}^{n}, f_{i-1}^{n}, f_{i+1}^{n}$ and $M_{i}^{n}$.  As a consequence, we can claim that if the initial condition is non-negative, and so are the observations, the density $f_{i}^{n}$ remains positive for any time, \textit{a fortiori} the water height. The scheme preserves the non-negativity of the water height. In the case where $f \geq 0$, it is a convex function, which enables us to write for the convex combination \eqref{scheme1}
\begin{multline}
e(f_{i}^{n+1-}) \leq  \left( 1 - \sigma_{i}^{n} \xi^{+} + \sigma_{i}^{n} \xi^{-} - \lb \Delta t^{n}\right)e(f_{i}^{n}) \\ + \sigma_{i}^{n} \xi^{+} e(f_{i - 1}^{n})- \sigma_{i}^{n} \xi^{-}e(f_{i+1}^{n})  + \lb \Delta t^{n} e(\widetilde{M}_{i}^{n}).
\label{scheme2}
\end{multline}
And \eqref{scheme2} is factorized again as
\be
e(f_{i}^{n+1-}) \leq e(f_{i}^{n}) - \sigma_{i}^{n} \xi\left(e(f_{i+1/2}^{n}) - e(f_{i - 1/2}^{n})\right) + \lb \Delta t^{n} \left( e(\widetilde{M}_{i}^{n}) - e(f_{i}^{n})\right).
\label{scheme3}
\ee
 We integrate \eqref{scheme3} with respect to $\xi$ to get
 \[
 \widehat \zeta_{i}^{n+1-} \leq \widehat \zeta_{i}^{n} + \sigma_{i}^{n}\left( G_{i+1/2}^{n} - G_{i-1/2}^{n}\right) + \lb \Delta t^{n} \left( \widetilde{\zeta}_{i}^{n}-\widehat \zeta_{i}^{n}\right). 
 \]
Eventually, Lemma~\ref{lemminenergy} ensures that the minimum of energy is achieved for the Gibbs equilibrium of the form \eqref{gibbsobserver}, authorizing us to write
 \[
  \int_{\R}{e(f_i^{n+1})d\xi} = \widehat \zeta_{i}^{n+1}  \leq \widehat \zeta_{i}^{n+1-} \leq \widehat \zeta_{i}^{n} - \sigma_{i}^{n}\left( G_{i+1/2}^{n} - G_{i-1/2}^{n}\right) + \lb \Delta t^{n} \left( \widetilde{\zeta}_{i}^{n}-\widehat \zeta_{i}^{n}\right). 
 \]
\end{proof}
\begin{rem}
 In case of regular solutions, it can be shown that $\widehat \zeta$ tends to $\zeta$ in time, echoing Proposition \ref{thmkineasy}.
\end{rem}

\subsubsection{Simulations}
In order to show how the kinetic Luenberger observer converges to non trivial equilibria, the first test case in two dimensions are the Thacker solutions \cite{thacker1981some}. Those are analytical periodic solutions to the Saint-Venant system in a parabolic bowl for which the free surface always remains a straight line. They are quite difficult to handle numerically, because of the presence of numerous drying and flooding areas in time. The domain is thus the onedimensional parabolic bowl characterized by 
$$z_b (x) = \frac{h_{m}}{a^{2}}\left(\left(x-\frac{L}{2}\right)^2-a^{2}\right),$$
where $a = 1.0$, $L = 4.0$ and $h_m = 0.5$. Our reference solution, which will serve as observations, starts with an initial water height $$H(0,x) = \max(0,-h_{m}/a^2((x+1/2)^2-a^2)),$$ and null velocity $u(0,x) = 0$. This makes the surface a straight line of equation $\eta(x) = \frac{-h_m}{a^2}(4x - 0.25)$. The assimilated problem on the contrary starts from an equilibria $$\widehat H(0,x) = \max(0,-z_b (x)),$$ $u(0,x)=0$, such that the surface has equation $\eta(x) = 0$. The domain is divided into 300 cells and the simulation lasts 15 seconds. Exact observations are carried out over the spatial interval $\left[ 1.5 , 2.5\right]$ ($25\%$ of the domain) and every $0.05$s. Figure~\ref{thackerevolution} shows snapshots of the evolution of an observer towards an analytical Thacker solution. Visually, the observer converges quite fast towards the observations. To be more specific, we study the relative error between the exact height and the observer's 
height. In order to illustrate the different convergence 
results with 
respect to $\lambda$, we repeat the experiment for several values of the nudging coefficient. Figure~\ref{thackererror} on the left plot shows the result in the case where observations are exact, but partial in space and time. We obtain two rapidly decreasing curves, which goes approximately to zero when $\lambda$ gets bigger (the residual error is due to the discretization of the numerical scheme). This is what was expected from Proposition~\ref{propDASV}: without noise, the bigger $\lambda$, the better. However, this assessement is not true numerically as $\lb$ influences badly the time step \eqref{cflobserver}. Figure~\ref{thackererror} shows the same situation with the right plot, when the noise already used in Section~\ref{resultsburgers} is added to data ( $\varepsilon^{1 - \alpha}\cos(\frac{x}{\varepsilon})$, with $\varepsilon = 0.1$ and $\alpha = 0.25$). Even if Theorem~\ref{thmnoise1lin} did not apply to systems of conservation laws with source terms, we see clearly here that there is an optimal $\
lambda$ (here around 13), balancing 
the 
fact that the gain improves the match with data but also amplifies the noise. We notice also that the $L^{1}$ error norm does not tend to zero, since it cannot get rid of the perturbations added on the data.
\begin{figure}[h!]
\begin{center}
\begin{tikzpicture}
\begin{pgfonlayer}{background}
\node{\includegraphics[width=0.9\textwidth]{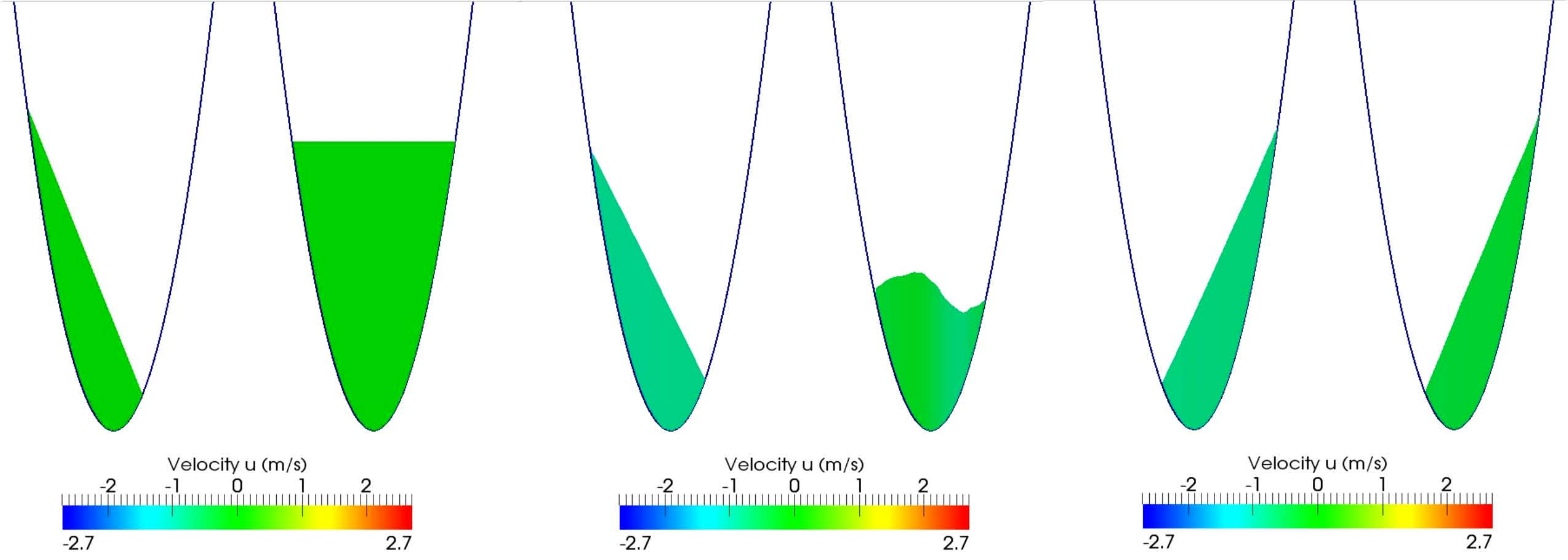}};
\end{pgfonlayer}
\draw[line width=2,densely dotted] (-5.45,0) -- (-4.60,0);
\draw[line width=2,-,red] (-5.15,0) -- (-4.90,0);
\draw[line width=2,-,red] (-5.15,0) node {$|$} ;
\draw[line width=2,-,red] (-4.90,0) node {$|$} ;
\end{tikzpicture}
\caption[Evolution of Thacker solution with assimilation]{Evolution of Thacker solution at $t=0$s, $t=2$s and $t=5$s. For each time, the left plot is the reference solution and the right plot is the observer. The red portion of the domain on the left plot is the observation domain.}
\label{thackerevolution}
\end{center}
\end{figure}

\begin{figure}[h!]
\begin{center}
\begin{tikzpicture}[scale=0.7]
\begin{axis}[
xlabel=$\lambda$,
ylabel=Relative error (\%),
title={Error at 15 s}]]
\addplot[color=red,mark=*] table[x=gain,y=error]
	 {FigDA/thacker1.txt};
	 \addlegendentry{300 cells}	
\addplot[color=blue,mark=*] table[x=gain,y=error]
	 {FigDA/thacker2.txt};	
	 \addlegendentry{500 cells}	 
\end{axis}
\end{tikzpicture}
\hspace{0.5cm}
  \begin{tikzpicture}[scale=0.7]
\begin{axis}[
xlabel=$\lambda$,
ylabel=Relative error (\%),
title={Error at 15 s}]]
\addplot[color=red,mark=*] table[x=gain,y=error]
	 {FigDA/thacker3.txt};
	 \addlegendentry{$\epsilon=0.1$}		 
\end{axis}
\end{tikzpicture}

  \caption[Thacker noisy solution relative error]{Thacker noisy solution. Relative error $\frac{\norm{\widehat H - H}_{L^{1}}}{\norm{H}_{L^{1}}}$ for different values of $\lambda$. Left: in the case where no noise is added to the observations. Right: in the case where a noise is added to the observations.}
\label{thackererror}
 \end{center}
\end{figure}
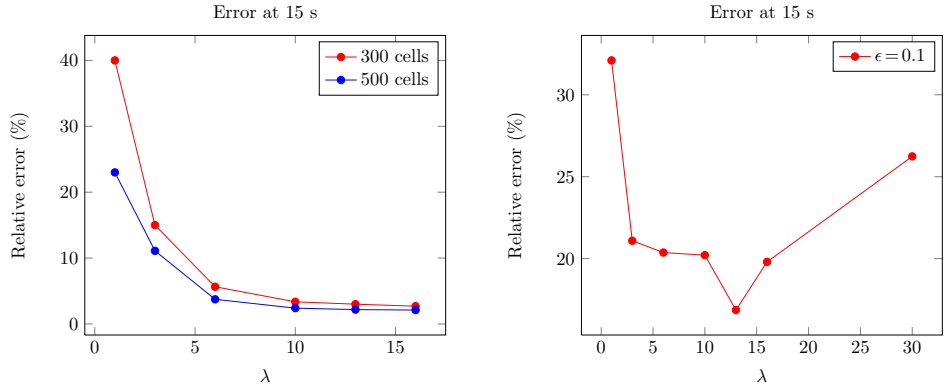

Then, we propose a numerical test of our kinetic Luenberger observer on the non-hydrostatic Saint-Venant system \cite{saintemarievertical2009}
\begin{subequations}
\begin{numcases}{}
\frac{\partial H}{\partial t} + \frac{\partial}{\partial x} \bigl(H u\bigr) = 0, \label{eq:euler_1}\\
\frac{\partial}{\partial t}(Hu) + \frac{\partial}{\partial x}\left(H u^2 
+ \frac{g}{2}H^2 + Hp_{nh,0}\right) = - (gH +
\left. p_{nh,0}\right|_b)\frac{\partial
  z_b}{\partial x} + p^a\frac{\partial \eta}{\partial x},\label{eq:euler_2}\\
\frac{\partial}{\partial t}(H w)  + \frac{\partial}{\partial
 x}(H w u) = \left. p_{nh,0}\right|_b,\label{eq:euler_3}\\
\frac{\partial}{\partial t}\left(\frac{\eta^2 - z_b^2}{2}\right)  + \frac{\partial}{\partial x}\left(\frac{\eta^2-z_b^2}{2}u\right) = H w,\label{eq:euler_4}
\end{numcases}
\end{subequations}
detailed in \cite{Bristeau201151} and recalled here without proof.
No theoretical work has yet been pursued by the authors about data assimilation on this system, but we extended the result on the observer built for the Saint-Venant system, where only the water height was required. The context is now a bidimensional 30m long pond, with the bottom topography
\begin{equation*}
	z_b(x)=
	\begin{cases}
		 z_0, & \text { for } x \leq 6 \\
		 z_0+\frac{z_M-z_0}{6}(x-6), & \text { for } 6 \leq x < 12  \\
		z_M & \text { for } 12 \leq x \leq 14  \\
		z_M+\frac{z_0-z_M}{3}(x-14) & \text { for } 14 \leq x\leq 17  \\
		z_0 & \text { for } 17 \leq x \leq 30,	
	\end{cases}
 \label{toposimu}
\end{equation*}
where $z_0 = -0.4$, $z_M =-0.1$. The domain is divided into 400 cells, and the simulation lasts $20$s. Observations are made on the whole domain in space and time, but only on $H(t,x)$. Unlike what was done for Thacker solutions, an interpolation in time is carried out between two measurements to provide an observation value every 0.05s. Figure~\ref{svtsnapshots} shows the compared evolution of the reference simulation on the left, and of the observer, on the right. Both simulations started with different initial conditions. The reference initial condition is, for $x_{\text{max}} = 30$ and $\eta_1 = 0.2$.
\begin{subequations}
\begin{align*}
 & H_0(x) = \eta_1-z_b(x) + 0.3 e^{\displaystyle -(x-0.75 x_{\text{max}})^2}, \\
 & Hu_0(x) = 0.3 \sqrt{g(\eta_1-z_b(x))} e^{\displaystyle -(x-0.75 x_{\text{max}})^2}, 
\end{align*}
\end{subequations}
While the observer begins with a completely different water height, for $\eta_2 = 1$ this time
\begin{subequations}
\begin{align*}
 & H_0(x) = \eta_2-z_b(x),  \\
 & Hu_0(x) = 0.3 \sqrt{g(\eta_1-z_b(x))} e^{\displaystyle -(x-0.75 x_{\text{max}})^2}.
\end{align*}
\end{subequations}

\begin{figure}[h!]
\begin{center}
 \begin{tabular}{cc}
\includegraphics[width=0.45\textwidth]{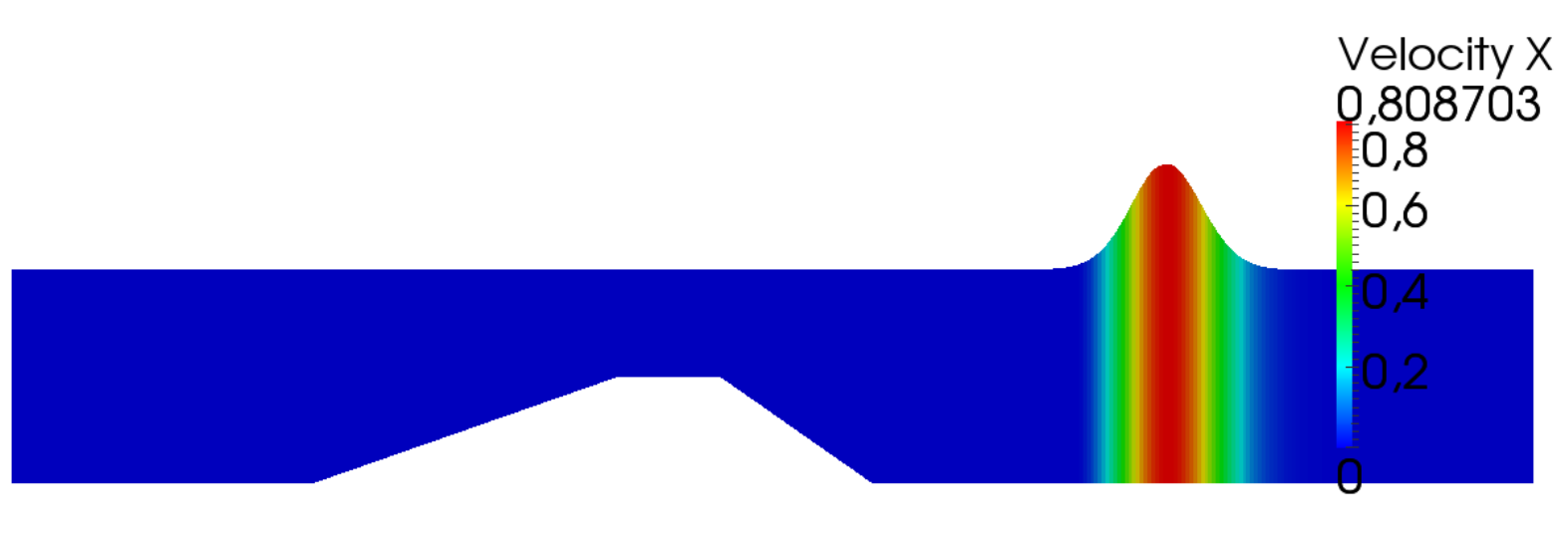} & \includegraphics[width=0.45\textwidth]{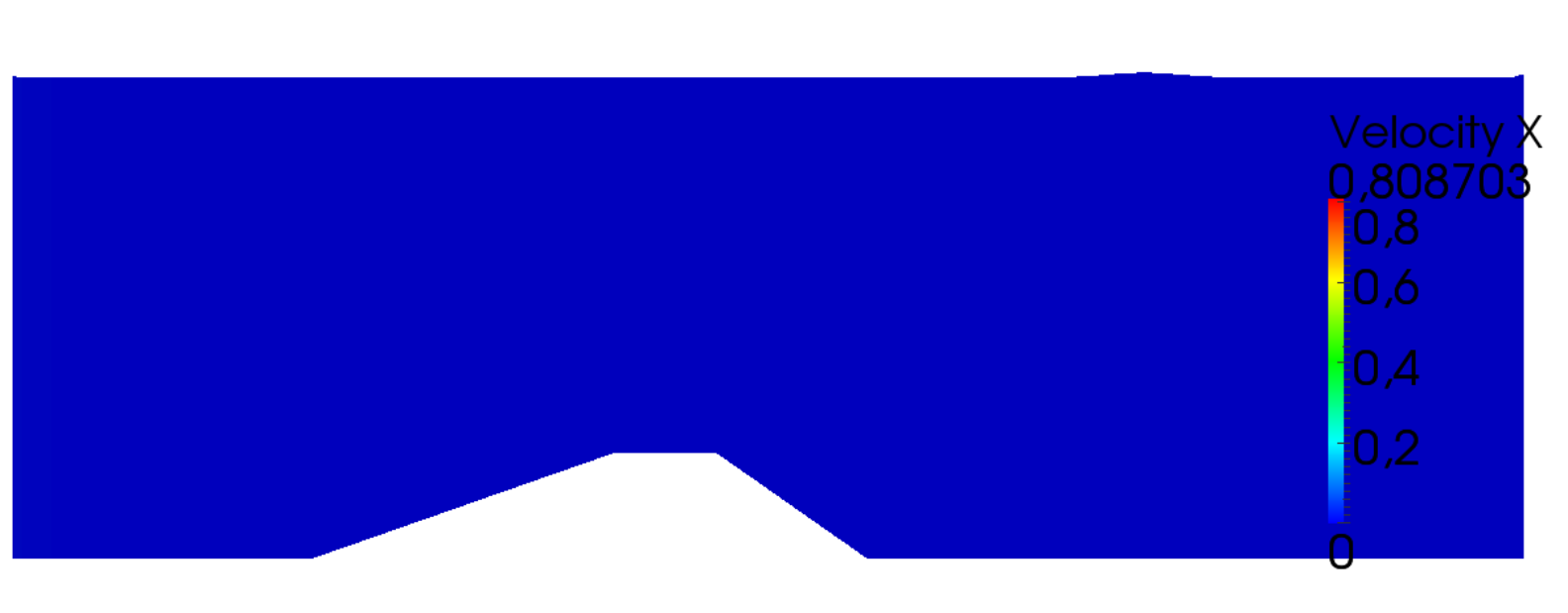}\\
 \includegraphics[width=0.45\textwidth]{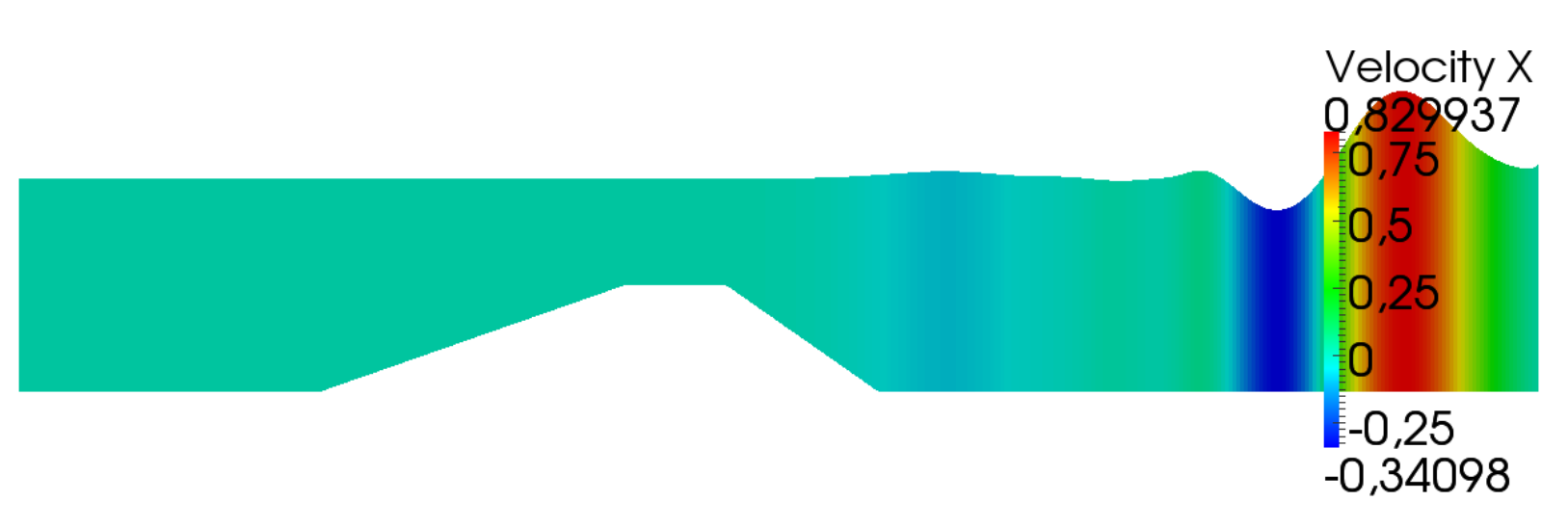} & \includegraphics[width=0.45\textwidth]{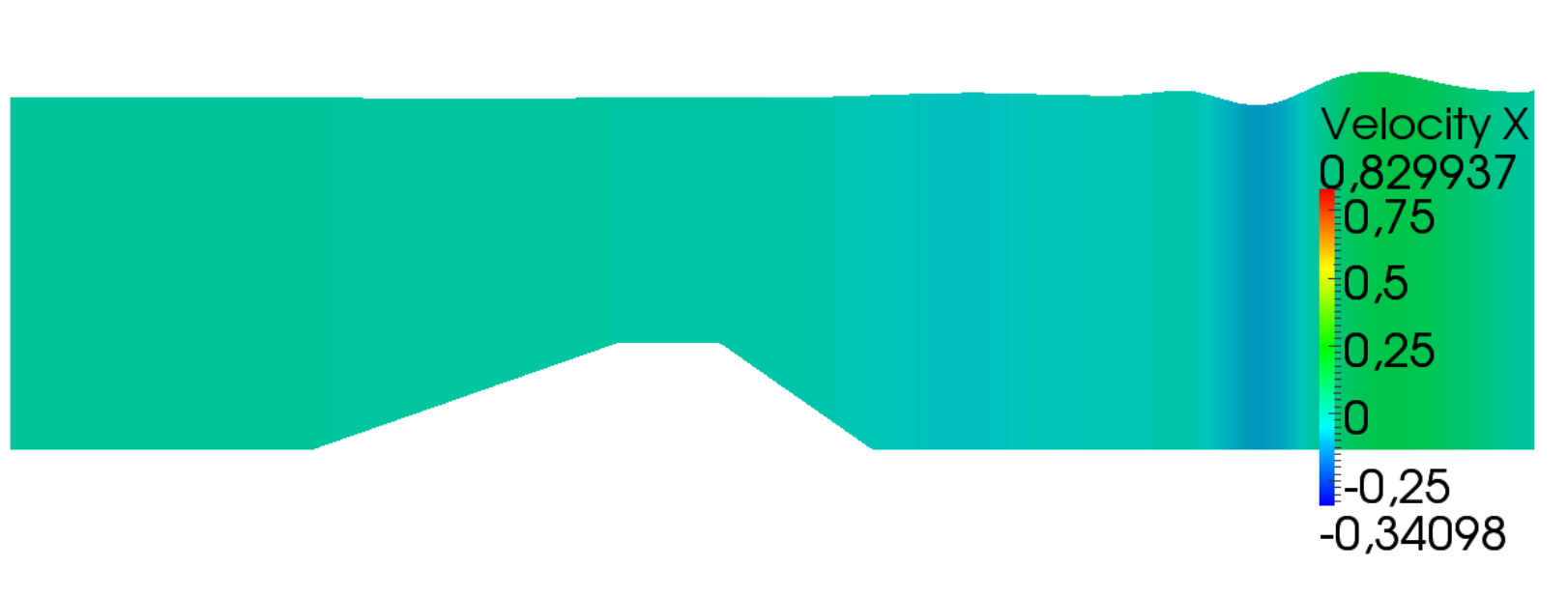}\\
 \includegraphics[width=0.45\textwidth]{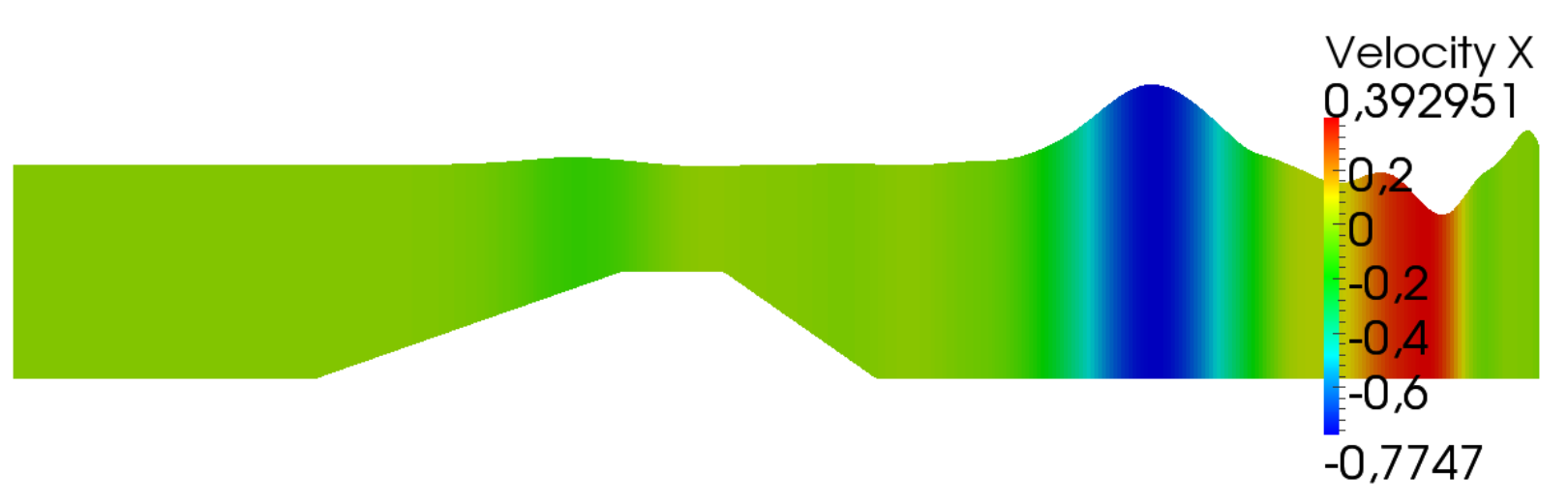} & \includegraphics[width=0.45\textwidth]{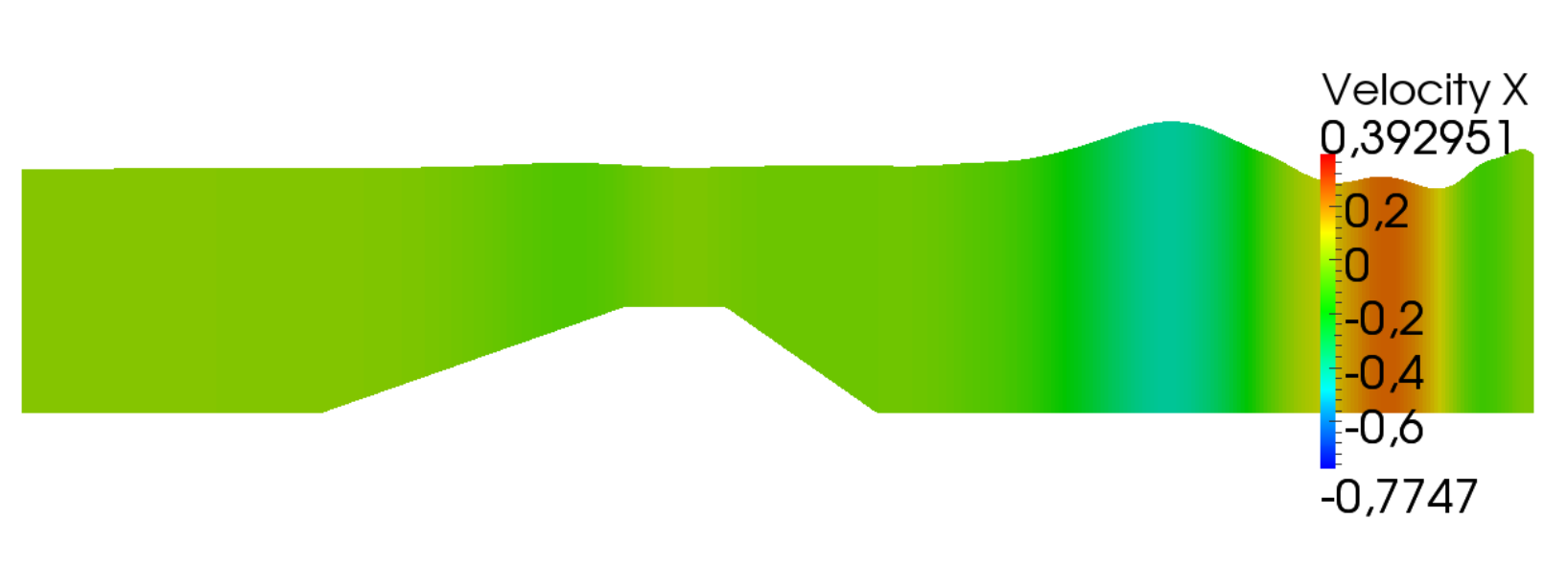}\\
 \includegraphics[width=0.45\textwidth]{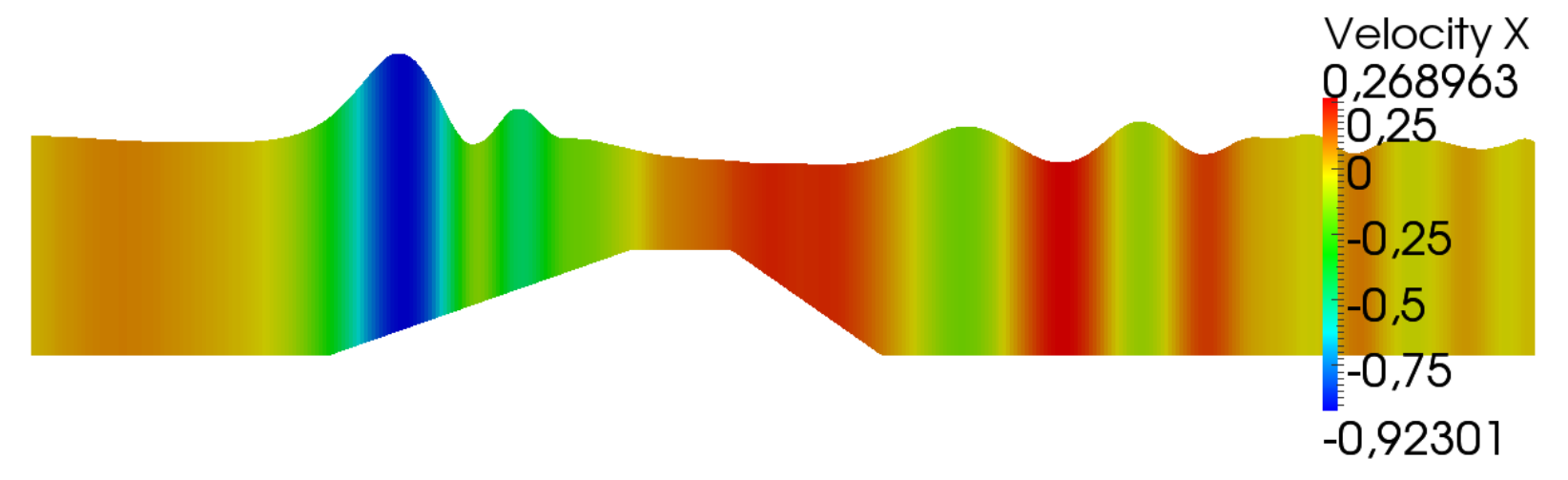} & \includegraphics[width=0.45\textwidth]{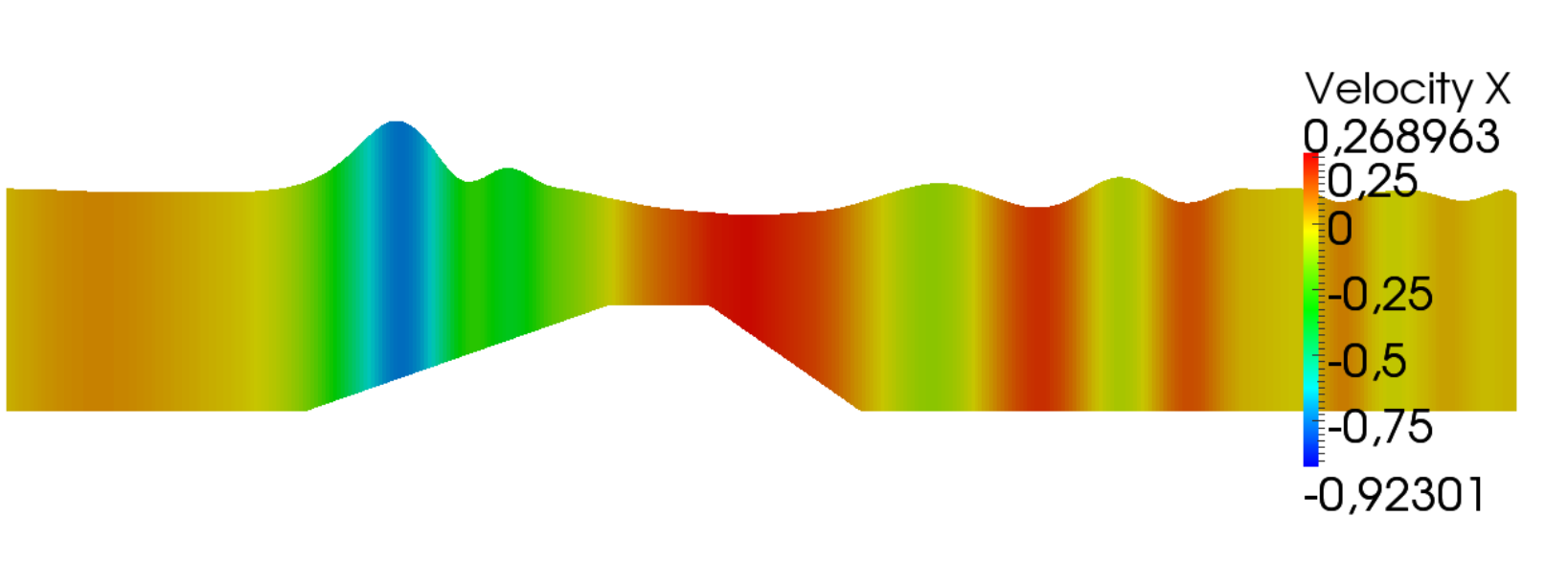}\\
\includegraphics[width=0.45\textwidth]{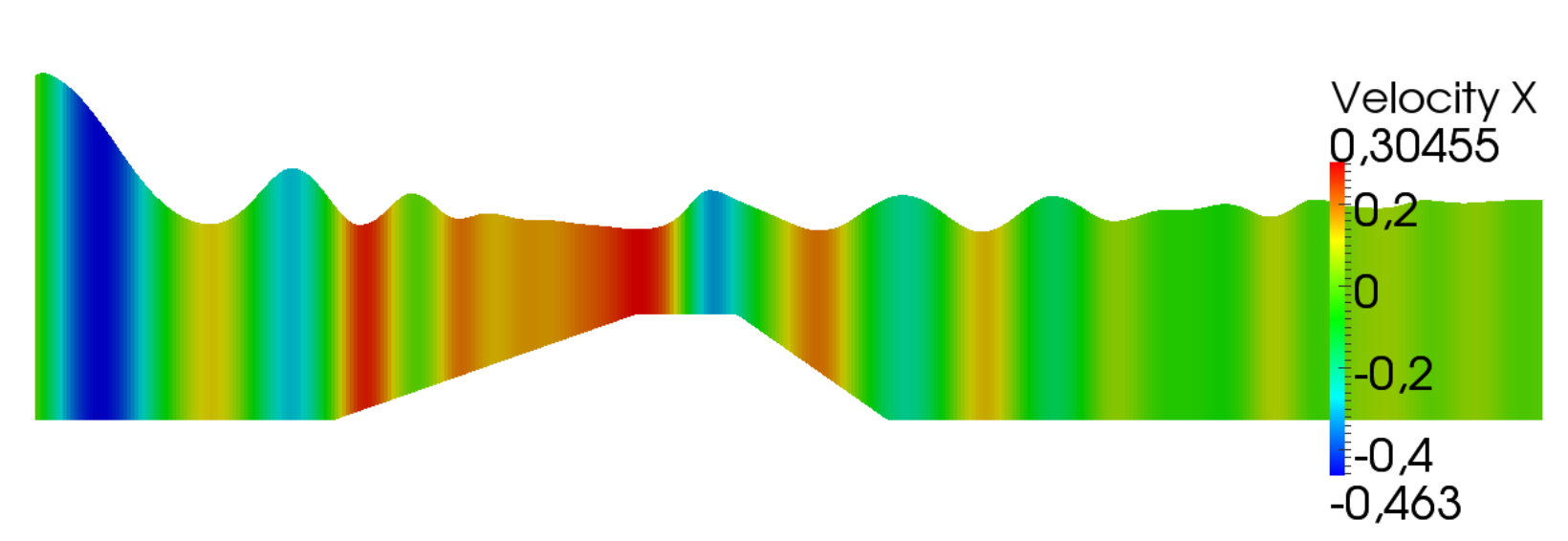} & \includegraphics[width=0.45\textwidth]{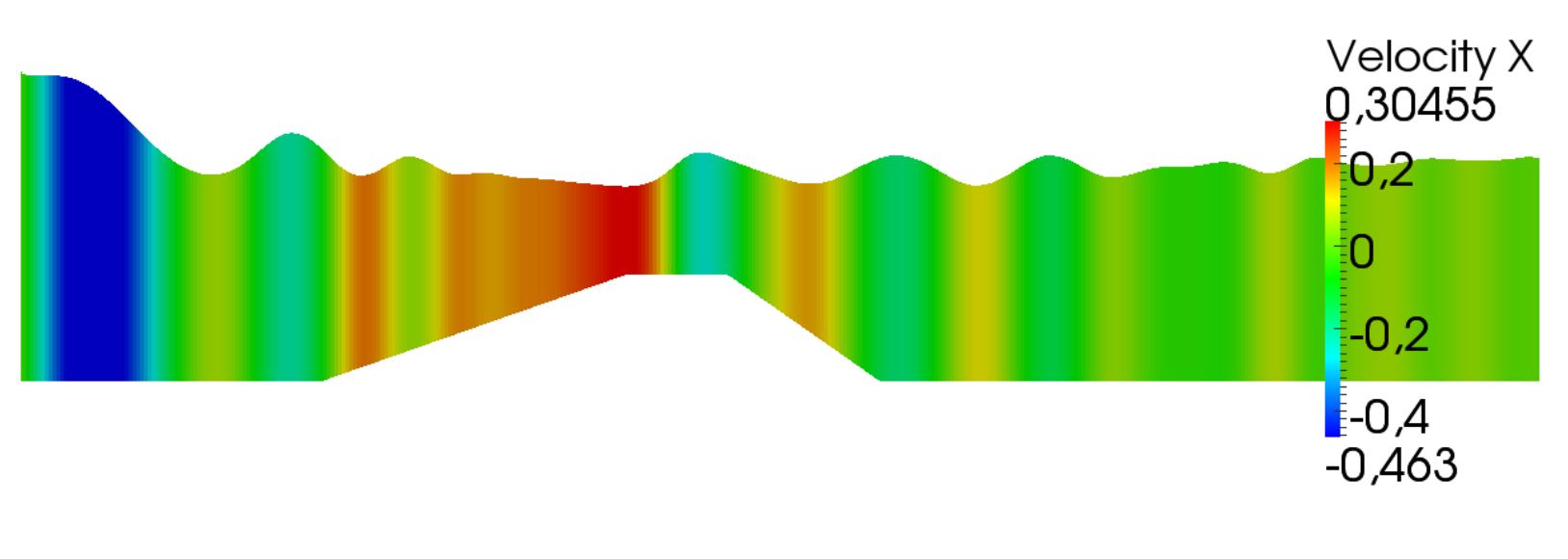} 
 \end{tabular}
\caption[Snapshots of assimilation on non-hydrostatic Saint-Venant system - horizontal velocity]{Snapshots of data assimilation on the non-hydrostatic Saint-Venant system, compared to the reference simulation for the horizontal velocity.}
\label{svtsnapshots}
\end{center}
\end{figure}

Once again, the observer shows good agreement with the observations. We propose in Figure~\ref{dingemanserror} a plot of the relative error between exact and complete observations in space, every $0.08$ s or $0.04$s. Even if this system is more general than the one studied theoretically in this article we can see that the observer built only from the knowledge of the water height is still efficient. The decrease of the error with $\lambda$ is obtained like for Thacker solutions. We may also notice that the time between two measurements influences the error, as it provides a better interpolation process.
\begin{figure}[htbp]
 \begin{center}
 \begin{tikzpicture}[scale=0.8]
\begin{axis}[
xlabel=$\lambda$,
ylabel=Relative error (\%),
title={Error at 20 s}]]
\addplot[color=red,mark=*] table[x=gain,y=error]
	 {FigDA/dingemans1.txt};
	 \addlegendentry{$\Delta t$ measures = 0.08}	
\addplot[color=blue,mark=*] table[x=gain,y=error]
	 {FigDA/dingemans2.txt};	
	 \addlegendentry{$\Delta t$ measures = 0.04}	 
\end{axis}
\end{tikzpicture}
  \caption[Error curve of assimilation on the non-hydrostatic Saint-Venant system]{Relative error $\frac{\norm{\widehat H - H}_{L^{1}}}{\norm{H}_{L^{1}}}$ for different values of $\lambda$ for the non-hydrostatic Saint-Venant system. Two observations time steps are tested.}
\label{dingemanserror}
 \end{center}
\end{figure}
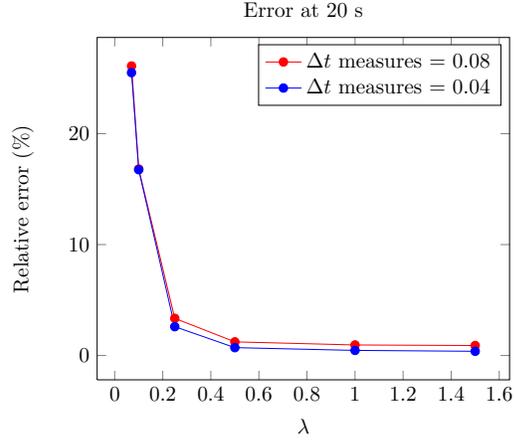


\section{Discussion}
In this article, we propose a new data assimilation strategy adapted to hyperbolic conservation laws and of practical use in complex cases as illustrated with the Saint-Venant system with source terms.  As usually done when tackling a problem on hyperbolic balance laws, the scalar case allows us to develop an extensive theoretical framework to demonstrate the interest of our approach. The joint numerical experimentations proved the good agreement of the numerical results with the presented theory. From a theoretical point of view it remains to precise some results in the case of shocks and partial measurements but this work understands the theoretical foundations behind the definition of a Luenberger observer at the kinetic level. The simplicity of the resulting estimator should be then compared to the classical difficulties arising in data assimilation for conservation laws. Moreover, it is in the case of systems like the non-hydrostatic Saint-Venant system or even the free surface hydrostatic Navier-Stokes 
equations that the use of the kinetic representation shows a critical advantage with respect to classical Luenberger definition since it allows to design a new --~nevertheless natural --~observer on the macroscopic variables which can be demonstrated to be stable in terms of energy balance. Even if the theoretical framework remains to be fully defined in the case of systems, the numerical results are in fact very encouraging.

We believe that the perspectives of introducing the kinetic representations in a data assimilation context are now numerous from a theoretical point of view to the practical context. It also encourages to try to apply other sequential approaches with the kinetic formulation, like for example considering a Kalman filter to the "linear" kinetic representation and then contemplating the resulting observer on the macroscopic variables. Note that such strategy of considering an additional variable before applying the Kalman filter can circumvent the ''curse of dimensionality'' only if a resulting system depending on the macroscopic variable can eventually be retrieved. The same type of open question raises if the system is approximated by a --~potentially very large --~system of ODEs via the method of characteristics where, on each characteristics, a Kalman filter can be easily applied. An interesting challenge is then to prove that the large system of resulting ODEs --~each controlled by a Kalman filter --~can be recombined to present an original and \emph{computable} filter on the initial PDE system

%
%
%
%
%
%
%
%
%
%
%
%
%
%
\medskip

\bibliographystyle{plain}
\bibliography{bibBMPS}
\end{document}